\newtheorem{theorem}{Theorem}[section]
\newtheorem{lemma}[theorem]{Lemma}
\newtheorem{proposition}[theorem]{Proposition}
\newtheorem{corollary}[theorem]{Corollary}
\newtheorem{remar}[theorem]{Remark}
\theoremstyle{definition}
\newtheorem{example}[theorem]{Example}
\newtheorem{prob}[theorem]{Open Problem}
\newenvironment{remark}{\begin{remar}\rm}{\end{remar}}
\newcommand{\bfind}[1]{\index{#1}{\bf #1}}
\newcommand{\n}{\par\noindent}
\newcommand{\sn}{\par\smallskip\noindent}
\newcommand{\mn}{\par\medskip\noindent}
\newcommand{\bn}{\par\bigskip\noindent}
\newcommand{\pars}{\par\smallskip}
\newcommand{\parm}{\par\medskip}
\newcommand{\parb}{\par\bigskip}
\newcommand{\cal}{\mathcal}
\newcommand{\sep}{^{\rm sep}}
\newcommand{\chara}{\mbox{\rm char}\,}
\newcommand{\Gal}{\mbox{\rm Gal}\,}
\newcommand{\tr}{\mbox{\rm Tr}\,}
\newcommand{\cO}{\mathcal{O}}
\newcommand{\cM}{\mathcal{M}}
\newcommand{\cG}{\mathcal{G}}
\newcommand{\cE}{\mathcal{E}}
\newcommand{\cC}{\mathcal{C}}
\newcommand{\cD}{\mathcal{D}}
\newcommand{\cT}{\mathcal{T}}
\newcommand{\ann}{\mbox{\rm ann}\,}
\newcommand{\fs}{^-}    
\newcommand{\R}{\mathbb R}
\newcommand{\N}{\mathbb N}
\newcommand{\F}{\mathbb F}
\begin{document}
\title[Galois extensions with independent defect]{On the computation of K\"ahler
differentials and characterizations of Galois extensions with independent defect}
\author{Steven Dale Cutkosky, Franz-Viktor Kuhlmann and Anna Rzepka}
\date{6.\ 2.\ 2025}

\thanks{The first author was partially supported by grant DMS 2054394
from NSF of the United States.}

\thanks{The second author was partially supported by Opus grant 2017/25/B/ST1/01815 
from the National Science Centre of Poland. He was also supported by a Miller Fellowship
during a four weeks visit to the University of Missouri Mathematics Department in 2022; 
he would like to thank the people at that department for their hospitality.}

\thanks{The authors thank the referee for many helpful corrections and comments.
We also thank Sylvy Anscombe, Arno Fehm, Hagen Knaf and 
Josnei Novacoski for inspiring discussions and helpful feedback.}

\address{Department of Mathematics, University of Missouri, Columbia,
MO 65211, USA}
\email{cutkoskys@missouri.edu}

\address{Institute of Mathematics, University of Szczecin,	
ul. Wielkopolska 15, 	  	  	
70-451 Szczecin, Poland}
\email{fvk@usz.edu.pl}

\address{Institute of Mathematics, University of Silesia in Katowice, Bankowa 14,
40-007 Katowice, Poland}
\email{anna.rzepka@us.edu.pl}

\begin{abstract}\noindent
For important cases of algebraic extensions of valued fields, we develop presentations 
of the associated K\"ahler differentials of the extensions of their valuation rings. 
We compute their annihilators as well as the associated differents. We 
then apply the results to Galois defect extensions of prime degree.
Defects can appear in finite extensions of valued fields of positive residue 
characteristic and are serious obstructions to several problems in positive
characteristic. 
A classification of defects (dependent vs.\ independent) has been introduced by the 
second and the third author. It has been shown that perfectoid fields and 
deeply ramified fields only admit extensions with independent defect. We give 
several characterizations of independent defect, using ramification 
ideals, K\"ahler differentials and traces of the maximal ideals of valuation rings.
All of our results are for arbitrary valuations; in particular, we have no restrictions on their rank or value groups. 
\end{abstract}

\subjclass[2010]{12J10, 12J25}
\keywords{K\"ahler differentials, ramification ideal, different, defect 
extension, Artin-Schreier extension, Kummer extension}

\maketitle
%
%
\section{Introduction}
A central goal of this paper is to give for important cases of algebraic extensions 
of valued fields a presentation of the associated K\"ahler differentials of the 
extensions of their valuation rings. This is crucial not only for applications in the
present paper, but also in the subsequent paper \cite{CuKu}.

By $(L|K,v)$ we denote a field extension $L|K$ where $v$ is a
valuation on $L$ and $K$ is endowed with the restriction of $v$. The valuation
ring of $v$ on $L$ will be denoted by $\cO_L\,$, and that on $K$ by $\cO_K\,$. 
Similarly, $\cM_L$ and $\cM_K$ denote the unique maximal ideals of $\cO_L$ and 
$\cO_K$. The value group of the valued field $(L,v)$ will be denoted by $vL$, and
its residue field by $Lv$. The value of an element $a$ will be denoted by $va$, and 
its residue by $av$. The \bfind{rank} of a valued field $(K,v)$ is the order type of
the chain of proper convex subgroups of its value group $vK$.  All of our results are for arbitrary valuations; in particular, we have no restrictions on their rank or value groups. 
Ranks higher than $1$ appear in a natural way when local uniformization, the local form
of resolution of singularities, is studied. Deeply ramified fields of infinite rank 
appear in model theoretic investigations of the tilting construction, as 
presented by Jahnke and Kartas in \cite{JK}.
Therefore, we do not restrict our computations to rank $1$, thereby indicating how 
Kähler differentials and their annihilators, as well as differents, can be 
computed in higher rank.

By $\Omega_{B|A}$  we denote the Kähler differentials, i.e., the module of relative
differentials, when $A$ is a ring and $B$ is an $A$-algebra. In Section~\ref{sect4.1},
we prove:
\begin{theorem}                                    \label{LimProp} 
Let $L|K$ be an algebraic field extension of degree $n$
and suppose that $A$ is a normal domain with quotient field $K$ and $B$ is a domain with
quotient field $L$ such that $A\subset B$ is an integral extension. Suppose that there 
exist generators $b_{\alpha}\in B$ of $L|K$, which are indexed by a totally ordered set 
$S$, such that $A[b_{\alpha}]\subset A[b_{\beta}]$ if $\alpha\le\beta$ and
\[
\bigcup_{\alpha\in S}A[b_{\alpha}]\>=\>B.
\]
Further suppose that there exist $a_{\alpha},a_\beta\in A$ such that $a_{\beta}\mid
a_{\alpha}$ if $\alpha\le \beta$ and for $\alpha\le\beta$, there exist $c_{\alpha,\beta}
\in A$ and expressions 
\begin{equation}                              \label{b_ab_b}
b_{\alpha}\>=\>\frac{a_{\alpha}}{a_{\beta}}b_{\beta}+c_{\alpha,\beta}\>.
\end{equation}
Let $h_{\alpha}$ be the minimal polynomial of $b_{\alpha}$ over $K$. Let $U$ and  $V$ 
be the $B$-ideals 
\[
U\>=\>(a_{\alpha}\mid \alpha\in S)\mbox{ and }V=(h_{\alpha}'(b_{\alpha})\mid \alpha\in S).
\]

Then we have a $B$-module isomorphism
\begin{equation}                      \label{U/UV}
\Omega_{B|A}\>\cong\> U/UV\>.
\end{equation}
\end{theorem}

For the case where $(L|K, v)$ is a valued field extension satisfying condition~(\ref{as}) 
below and $A=\cO_K$ and $B=\cO_L\,$, we compute $V$ in (\ref{altpresV}) and 
for arbitrary $\gamma\in S$, we obtain a $B$-module isomorphism
\begin{equation}                 \label{altpres}
\Omega_{\cO_L|\cO_K}\>\cong\> U/b_\gamma^\dagger U^n \quad \mbox{ with }\quad
b_\gamma^\dagger\,:=\, \frac{h'_\gamma(b_{\gamma})}{a_\gamma^{n-1}}\>.
\end{equation}

We determine the annihilator of $U/UV$ and thus of $\Omega_{\cO_L|\cO_K}$ in 
Proposition~\ref{propann}. For information on the different $\cD(\cO_L|\cO_K)$, 
see Theorems~\ref{Dd} and~\ref{prop}.

\pars
We define the defect and defect extensions 
in Section~\ref{sectdef}. For the construction of a large number of different defect
extensions and the role the defect plays in deep open problems in positive 
characteristic, see \cite{Ku31}. 
The defect can be understood through jumps in the construction of limit key 
polynomials and in the construction of pseudo-convergent sequences. A couple of recent
references explaining this phenomenon are \cite{NS} and \cite{dNS}.

We are generally interested in the study of defect
extensions of arbitrary finite degree. As explained in Section~\ref{sectdef},
via ramification theory this can be reduced to the investigation 
of purely inseparable extensions and of Galois extensions of degree $p=\chara Kv>0$. 

We find explicit realizations for the assumptions of Theorem~\ref{LimProp} for Galois 
defect extensions of prime degree and deduce the next theorem by combining 
Theorems~\ref{OASdef} and ~\ref{OKumdef}. For the definition of higher ramification 
groups and ramification ideals, see Section~\ref{secthrg}.
\begin{theorem}                                    \label{MT2} 
Let $\cE=(L|K,v)$ be a Galois defect extension of prime degree $p$. If $\chara K=0$,
then assume that $K$ contains all $p$-th roots of unity. Then $\cE$ has a 
unique ramification ideal $I_\cE\,$, and there is an $\cO_L$-module isomorphism 
\begin{equation}
\Omega_{\cO_L|\cO_K}\>\cong \> I_\cE / I_\cE^p\>. 
\end{equation}
\end{theorem}
In \cite{NS2}, Novacoski and Spivakovsky use the theory of key polynomials to derive a
presentation of $\Omega_{\cO_L|\cO_K}$ for finite pure extensions $(L|K,v)$ under the 
condition $vL=vK$ (which holds for all extensions covered by the above theorem).
The presentation of $\Omega_{\cO_L|\cO_K}$ for Galois defect extensions $(L|K,v)$ is 
also studied by Thatte in \cite{Th1,Th2}.

\pars
Galois defect extensions of degree $p$ of valued fields of characteristic $p>0$ 
(valued fields of \bfind{equal positive characteristic}) have first been classified 
by the second 
author in \cite{Ku30}. In \cite{KuRz} the classification was extended to the case of
Galois defect extensions of degree $p$ of valued fields of characteristic $0$ with 
residue fields of characteristic $p>0$ (valued fields of \bfind{mixed characteristic}), 
as follows. Take a Galois defect extension $\cE=(L|K,v)$ of prime degree $p$. For every 
$\sigma$ in its Galois group $\Gal (L|K)$, with $\sigma\ne\,$id, we set
\begin{equation}                        \label{Sigsig}
\Sigma_\sigma\>:=\> \left\{ v\left( \left.\frac{\sigma b-b}{b}\right) \right| \, 
b\in L^{\times},\,\sigma b\ne b \right\}\>.
\end{equation}
This set is a final segment of $vL=vK$ and independent of the choice of $\sigma$; 
we denote it by $\Sigma_\cE\,$. 
The $\cO_L$-ideal $I_{\cE}:= (a\in L\mid va\in\Sigma_\cE)$ is the unique 
\bfind{ramification ideal} of $\cE$. 
Detailed information on $\Sigma_\cE$ and $I_{\cE}$ is given in
Sections~\ref{secthrg} and~\ref{sectGaldefdegp}.

We say that $\cE$ has \bfind{independent defect} if
\begin{equation}                                     \label{indepdef}
\left\{\begin{array}{lcr}
\Sigma_{\cE}\!\!&=&\!\!\! \{\alpha\in vL\mid \alpha >H\}\>\mbox{ for some proper 
convex subgroup $H$}\\
&&\!\!\! \mbox{ of $vL$ such that $vL/H$ has no smallest positive element.}
\end{array}\right.
\end{equation} 
If (\ref{indepdef}) holds, then we will write 
$H_\cE$ for $H$. If there is no such subgroup $H$, we will say that $\cE$ has
\bfind{dependent defect}. If $vL=vK$ is 
archimedean (i.e., order isomorphic to a subgroup of $\R$), then condition
(\ref{indepdef}) just means that $\Sigma_{\cE}$ consists of all positive elements 
in $vL$ and $vL$ has no smallest positive element. 

\pars
In \cite{GR}, Gabber and Ramero define deeply ramified fields $(K,v)$ by the property
\begin{equation}                         \label{GRdefdr}
\Omega_{\cO_{K\sep}|\cO_K} \>=\> 0\>,
\end{equation}
where $K\sep$ denotes the separable-algbraic closure of $K$. For a valuative definition,
which they prove to be equivalent to theirs, and a generalization thereof, see e.g.\
\cite{KuRz}. In \cite{CuKu} we 
present a simplified version of their proof. The following is a consequence of 
the more general part 1) of Theorem~1.10 in \cite{KuRz}:
\begin{theorem}                                 \label{drinddef}
Every Galois defect extension of prime degree of a deeply ramified field has 
independent defect. 
\end{theorem}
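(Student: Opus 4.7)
The plan is to verify the two requirements in (\ref{indepdef}). Note first that, since $(L|K,v)$ is immediate, the Galois group acts trivially on residues, so $\sigma b/b$ has residue $1$ for every $b\in L^\times$, and thus $\Sigma_\cE$ consists of strictly positive values.

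Take $H_\cE$ to be the largest convex subgroup of $vL$ whose positive part is disjoint from $\Sigma_\cE$; it is well-defined because the union of a chain of such convex subgroups again has the property. The inclusion $\Sigma_\cE\subseteq\{\alpha\in vL\mid\alpha>H_\cE\}$ is automatic. For the reverse inclusion, observe that if some $\gamma>H_\cE$ were not in $\Sigma_\cE$, then, by maximality of $H_\cE$, the convex subgroup generated by $H_\cE$ and $\gamma$ would have to meet $\Sigma_\cE$, producing a positive integer $n$ with $n\gamma\in\Sigma_\cE$ while $\gamma\notin\Sigma_\cE$. So the task reduces to showing that $n\gamma\in\Sigma_\cE$ forces $\gamma\in\Sigma_\cE$ (for $\gamma>H_\cE$). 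Concretely, given $b\in L^\times$ with $v((\sigma b-b)/b)=n\gamma$, one must manufacture $b'\in L^\times$ with $v((\sigma b'-b')/b')\le\gamma$. The tool is (DRvr): surjectivity of Frobenius on $\cO_{\hat K}/p\cO_{\hat K}$ provides approximate $p$-th roots modulo $p$, which after passage to the completion and iteration lets one rescale $b$ to reduce the value. The two sub-cases (equal and mixed characteristic, cf.~\cite{KuRz}) require separate computations, and this step is the main obstacle.

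Once $\Sigma_\cE=\{\alpha>H_\cE\}$ is in hand, the condition that $vL/H_\cE$ has no smallest positive element is immediate from (DRvg): a smallest positive element would mean the minimal convex subgroup $H'\supsetneq H_\cE$ of $vL$ satisfies $H'/H_\cE\cong\Z$, directly contradicting (DRvg). Since $\Sigma_\cE$ is nonempty, $H_\cE$ is automatically a proper subgroup of $vL$, completing the verification of (\ref{indepdef}).
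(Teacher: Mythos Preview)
The paper does not prove this theorem; it simply records it as a consequence of \cite[Theorem~1.10, part~1)]{KuRz}. So there is no in-paper proof to compare against, only the argument in \cite{KuRz}.

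That said, your proposal is not a proof but an outline with the central step explicitly left open. You yourself write that producing $b'\in L^\times$ with $v((\sigma b'-b')/b')\le\gamma$ from $b$ with $v((\sigma b-b)/b)=n\gamma$ ``is the main obstacle''; you then gesture at (DRvr) without carrying anything out. This is exactly the heart of the matter, and the sketch you give for it is not yet a viable plan: condition (DRvr) concerns approximate $p$-th roots in $\cO_{\hat K}$, not in $\cO_L$, so ``rescaling $b$'' by such roots does not obviously yield an element of $L^\times$ with the desired property. The argument in \cite{KuRz} does not work directly with arbitrary $b\in L^\times$; it first passes through the explicit descriptions $\Sigma_\cE=-v(\vartheta-K)$ (Artin--Schreier case) or $\Sigma_\cE=\frac{1}{p-1}vp-v(\eta-K)$ (Kummer case) from Theorem~\ref{dist_galois_p}. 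This reduces the question to improving approximations $c\in K$ of $\vartheta$ or $\eta$, which is a statement purely about $K$ and is where (DRvr) can actually be brought to bear. Your outline does not make this reduction, and without it the invocation of (DRvr) is floating.

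Your framing via the maximal convex subgroup $H_\cE$ disjoint from $\Sigma_\cE$, and the reduction to ``$n\gamma\in\Sigma_\cE\Rightarrow\gamma\in\Sigma_\cE$'', is correct and essentially equivalent to the criterion $\Sigma_\cE=(n\Sigma_\cE)\!\uparrow$ of Lemma~\ref{SD} (cf.\ Proposition~\ref{ind=I^p=I}). Your final paragraph, deriving the density of $vL/H_\cE$ from (DRvg), is also fine. But these are the easy parts; the substantive content lies in the step you have not done.
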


Property (\ref{GRdefdr}) together with Theorem~\ref{drinddef} led us to investigate 
the connection between Kähler differentials and independent defect. In the present 
paper we apply the results stated at the beginning of the introduction to the 
following situation:
\begin{equation}                   \label{as}
\left\{
\begin{array}{l}
(L|K,v) \mbox{ is a Galois defect extension of prime degree } p=\chara Kv\\
\mbox{ and if } \chara K=0\,, \mbox{ then $K$ contains all $p$-th roots of unity.}
\end{array}
\right.
\end{equation}
In this situation we have $vL=vK$, and $L|K$ is an Artin-Schreier extension if 
$\chara K=p$, and a Kummer extension if $\chara K=0$. We obtain that their defect is 
independent if and only if $\Omega_{\cO_L|\cO_K}=0$; this is the 
equivalence of assertions a) and d) in Theorem~\ref{eqindep} below. 

Our results for extensions of the form (\ref{as}) will be complemented in 
\cite{CuKu} by applying Theorem~\ref{LimProp} to Galois extensions of
prime degree without defect. Using this together with the results of the present paper,
it is shown that a valued field is deeply ramified if and only if the Kähler 
differentials of all Galois extensions of prime degree are zero; moreover, we compute 
Kähler differentials for all finite Galois extensions satisfying (\ref{as}).

\pars
Another way to characterize independent defect of Galois extensions $(L|K,v)$ 
of prime degree is through the trace $\tr_{L|K} \left(\cM_L\right)$ where  
$\tr_{L|K}$ denotes the trace of $L|K$. Note that if $(L|K,v)$ is unibranched, 
then $\tr(\cO_L)\subseteq\cO_K\,$. 

\pars
The following theorem summarizes various characterizations of independent defect. 
The convex subgroups $H$ of $vL$ are in one-to-one correspondence with the
coarsenings $v_H$ of $v$ on $L$ in such a way that $v_H L=vL/H$. The maximal ideal of 
the valuation ring $\cO_{v_H}$ of $v_H$ on $L$ is $\cM_{v_H}=(a\in L \mid va>H)$. If 
$H$ is a subgroup of some ordered abelian group $\Gamma$, then we call it a 
\bfind{strongly convex subgroup} if it is a proper convex subgroup such that $\Gamma/H$
has no smallest positive element. 

If $\chara K=0$ and
$\chara Kv=p>0$, then we denote by $(vK)_{vp}$ the smallest convex subgroup of $vK$ 
that contains $vp$ (cf.\ \cite{KuRz}). If $\chara K>0$, then we set $(vK)_{vp}=vK$. 

\begin{theorem}                              \label{eqindep}
Assume that the extension $\cE=(L|K,v)$ satisfies (\ref{as}). 
Then the following assertions are equivalent:
\sn
a) $\cE$ has independent defect,
\sn
b) the ramification ideal $I_\cE$ of $\cE$ is equal to $\cM_{v_H}$
for some strongly convex subgroup $H$ of $vL$,
\sn
c) $I_\cE^p=I_\cE\,$,
\sn
d) $\Omega_{\cO_L|\cO_K}=0$,
\sn
e) $\tr_{L|K} \left(\cM_L\right)=\cM_{v_H}\cap K$ for some strongly convex subgroup 
$H$ of $vK$.
\parm
If assertions b) or e) hold, then $H=H_\cE$, and the valuation ring of $v_H$ is
the localization of $\cO_L$ with respect to the ramification
ideal $I_\cE$. The value group of the corresponding coarsening of $v$ does not have 
a smallest positive element.

\pars
If $vK$ is archimedean, or more generally, if $(vK)_{vp}$ is archimedean, 
then $H_\cE$ can only be equal to $\{0\}$ and if the
ramification ideal is a prime ideal, then it can only be equal to $\cM_L\,$.

\pars
If $\Omega_{\cO_L|\cO_K}$ is annihilated by $\cM_L$, then $\Omega_{\cO_L|\cO_K}=0$.
\end{theorem}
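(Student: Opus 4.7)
The plan is to place (a) and (b) at the center---with (a) $\Leftrightarrow$ (b) essentially a definitional unwinding---and then to establish (b) $\Leftrightarrow$ (c), (a) $\Leftrightarrow$ (d), and (a) $\Leftrightarrow$ (e) separately. For (a) $\Leftrightarrow$ (b), I would use the descriptions $I_\cE = \{a \in L \mid va \in \Sigma_\cE\}$ (from Section~\ref{sectGaldefdegp}) and $\cM_{v_H} = \{a \in L \mid va > H\}$: the two ideals coincide precisely when $\Sigma_\cE = \{\alpha \mid \alpha > H\}$, which is condition (\ref{indepdef}); the strong convexity of $H$ matches the hypothesis that $vL/H$ have no smallest positive element, and the subgroup $H$ is then forced to equal $H_\cE$.

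For (b) $\Leftrightarrow$ (c) the main ingredient is the valuation-ring identity $\cM_{v_H}^p = \cM_{v_H}$ whenever $vL/H$ has no smallest positive element (every element of $\cM_{v_H}$ being expressible as a product of $p$ elements of strictly smaller positive value), together with the converse observation that a principal maximal ideal never satisfies this. Applied to $I_\cE = \cM_{v_H}$ this yields (b) $\Rightarrow$ (c), while (c) $\Rightarrow$ (b) proceeds by arguing that $I_\cE^p = I_\cE$ forces $\Sigma_\cE$ to have no minimum and, together with its being a semigroup final segment, to take the form $\{\alpha > H\}$ for the convex subgroup $H := \{\gamma \mid \pm\gamma < \Sigma_\cE\}$, with the strong convexity of $H$ emerging from the same divisibility reasoning.

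The principal technical step is (a) $\Leftrightarrow$ (d). Here I would present $\cO_L$ via a Galois generator $\theta$ chosen to control $\Sigma_\cE$---Artin--Schreier type ($\theta^p - \theta = c$) in equal characteristic and Kummer type ($\theta^p = c$, possibly after a preliminary adjunction of $p$-th roots of unity) in mixed characteristic---so that $v(f'(\theta))$ for the minimal polynomial $f$ can be read off from $\Sigma_\cE$ and (in mixed characteristic) $vp$. The module $\Omega_{\cO_L|\cO_K}$ is cyclic on $d\theta$; its annihilator, up to a correction term reflecting the possible failure of $\cO_L = \cO_K[\theta]$, is a value ideal of $\cO_L$ whose equality to $\cO_L$ translates via the explicit formula for $v(f'(\theta))$ to $I_\cE^p = I_\cE$. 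This direct computation of $\Omega_{\cO_L|\cO_K}$, and in particular the unification of the two characteristic cases and the control of the correction term when $\cO_L$ is not monogenic over $\cO_K$, is the main obstacle.

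Finally, for (a) $\Leftrightarrow$ (e) I would expand
\[
\tr_{L|K}(b) \>=\> p\,b \>+\> \sum_{i=1}^{p-1}(\sigma^i b - b)
\]
and use $v((\sigma^i b - b)/b) \in \Sigma_\cE$ (plus the contribution of $vp$ in mixed characteristic) to compute the value set of $\tr_{L|K}(\cM_L) \cap K$; this matches $\cM_{v_H} \cap K$ exactly when $\Sigma_\cE$ has the form (\ref{indepdef}). The supplementary claims then fall out: the subgroup $H$ in (b) and (e) coincides with $H_\cE$ by uniqueness of the convex subgroup complementary to $\Sigma_\cE$; the identification $(\cO_L)_{I_\cE} = \cO_{v_H}$ is the standard description of the coarsening of a valuation at a prime ideal, its value group $vL/H_\cE$ having no smallest positive element by strong convexity; and the archimedean remark is immediate since any proper convex subgroup of an archimedean group is $\{0\}$, applied to $vK$ or to $(vK)_{vp}$ (which contains $\Sigma_\cE$ in mixed characteristic).
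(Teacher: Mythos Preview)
Your treatment of (a)$\Leftrightarrow$(b) and (b)$\Leftrightarrow$(c) is essentially what the paper does (the latter via Lemma~\ref{SD} and Lemma~\ref{lemI^n=I}, packaged as Proposition~\ref{ind=I^p=I}).

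The real gap is in (a)$\Leftrightarrow$(d). Your assertion that $\Omega_{\cO_L|\cO_K}$ is cyclic on $d\theta$ is false in the defect situation. Since $v(\theta-K)$ has no maximum, $\cO_L$ is never monogenic over $\cO_K$; and in fact the paper shows $\Omega_{\cO_L|\cO_K}\cong I_\cE/I_\cE^p$ (Theorems~\ref{OASdef} and~\ref{OKumdef}), which is \emph{not} cyclic when nonzero: if $I_\cE=(a)+I_\cE^p$ then the value set of the right side is $[va,\infty)\cup(p\Sigma_\cE)\!\uparrow$, and since $\Sigma_\cE$ has no minimum one is forced into $(p\Sigma_\cE)\!\uparrow=\Sigma_\cE$, i.e.\ the independent case. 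What you call a ``correction term'' is the entire content of the computation. The paper handles this by writing $\cO_L$ as the union of an increasing chain of monogenic subalgebras $\cO_K[\theta_c]$ (Theorem~\ref{O_LforASK}, Corollary~\ref{CorKumIm}), each with cyclic differentials $\cO_K[\theta_c]/(g_c'(\theta_c))\,dX_c$, and then computing the direct limit explicitly (Proposition~\ref{LimProp}); the transition maps $dX_{c_1}\mapsto (t_{c_1}/t_{c_2})\,dX_{c_2}$ lie in $\cM_K$, and it is these that assemble into the ideal $I_\cE$ rather than a single annihilator.

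For (a)$\Leftrightarrow$(e), your expansion $\tr(b)=pb+\sum_i(\sigma^ib-b)$ only yields $v(\tr(b))\geq vb+\alpha$ for some $\alpha\in\Sigma_\cE$, hence at best $\tr(\cM_L)\subseteq I_\cE\cap K$; it does not visibly produce the sharp equality $\tr(\cM_L)=I_\cE^{p-1}\cap K$ of Theorem~\ref{defetrace}, and in particular gives no mechanism for the reverse inclusion. The paper instead computes traces of the specific elements $b(\vartheta-c)^{p-1}$ (resp.\ $b(\eta-c)^{p-1}$) via the classical identity of Lemma~\ref{trace}, obtaining one inclusion directly, and then uses the Taylor-expansion control of Lemma~\ref{Kap} to bound $v(\tr(g(\theta)))$ from below for arbitrary $g(\theta)\in\cM_L$.
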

For more equivalent conditions, see part 1) of Proposition~\ref{ind=I^p=I}.

The equivalence of assertions a) and b) follows from the definition of independent
defect and the fact that $I_{\cE}= (a\in L\mid va\in\Sigma_\cE)$.
The equivalence of assertions a) and c) will be proved in part 1) of 
Proposition~\ref{ind=I^p=I}.
The equivalence of assertions c) and d) follows from Theorem~\ref{MT2}.
The equivalence of assertions a) and e) follows from the next theorem. 
The last assertion follows from part 3) of Proposition~\ref{propann}.
The remaining assertions, in particular the equality $H=H_\cE$, will be proven 
together with the next theorem in Section~\ref{secttracepf}.
\begin{theorem}                                          \label{defetrace}
Assume that the extension $\cE=(L|K,v)$ satisfies (\ref{as}). Then
\begin{equation}                                          \label{defetraceeq}
\tr_{L|K} \left(\cO_L\right) \>=\> \tr_{L|K} \left(\cM_L\right) \>=\> 
(b\in K \mid vb\in (p-1)\Sigma_\cE) \>=\> (I_\cE\cap K)^{p-1}\>.
\end{equation}
The extension $\cE$ has independent defect if and only if for some 
strongly convex subgroup $H$ of $vL$,
\begin{equation}                                            \label{trMM}
\tr_{L|K} \left(\cO_L\right) \>=\> \tr_{L|K} \left(\cM_L\right) \>=\> \cM_{v_H}\cap K 
\>=\> (b\in K\mid vb> H)\>.
\end{equation}
In particular, if $H=\{0\}$ (which is always the case 
if $vK$ is archimedean, or more generally, if $(vK)_{vp}$ is archimedean), 
then this means that
\[
\tr_{L|K} \left(\cM_L\right) \>=\> \cM_K\>.
\]
In the mixed characteristic case, if (\ref{trMM}) holds, then $\cM_{v_H}$ will contain
$p$, so that $\chara Kv_H =p$.
\end{theorem}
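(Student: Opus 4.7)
The plan is to first establish (\ref{defetraceeq}), computing $\tr_{L|K}(\cM_L)$ as an ideal of $\cO_K$, and then to deduce (\ref{trMM}) together with the identification $H=H_\cE$ by combining (\ref{defetraceeq}) with the definition of independent defect. Throughout I fix a generator $\sigma$ of $\Gal(L|K)$ and write $\delta:=\sigma-1$, regarded as a $K$-linear operator on $L$.

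For the inclusion $\tr_{L|K}(\cM_L)\subseteq I_\cE^{p-1}\cap K$, I would exploit that $\tr_{L|K}$ is expressible in terms of $\delta$. In equal characteristic, direct manipulation in $\F_p[\sigma]$ using $\sigma^p-1=(\sigma-1)^p$ gives the operator identity $\tr_{L|K}=\delta^{p-1}$; in mixed characteristic, expanding $\sigma^i=(1+\delta)^i$ binomially and summing via the hockey-stick identity yields
\[
\tr_{L|K} \>=\> \delta^{p-1} + \sum_{k=0}^{p-2}\binom{p}{k+1}\delta^k,
\]
with all error coefficients divisible by $p$. By the defining property of $\Sigma_\cE$, $v(\delta c/c)=v((\sigma c-c)/c)\in\Sigma_\cE$ for every $c\in L^\times$, so an induction on $k$ gives $v(\delta^{p-1}b)\in vb+(p-1)\Sigma_\cE$ (in the sumset sense). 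For $b\in\cM_L$ we have $vb\ge 0$, hence $v(\delta^{p-1}b)\in (p-1)\Sigma_\cE$; the mixed-characteristic error terms $p\delta^k b$ have valuation $vp+v(\delta^k b)$, which also lands in $(p-1)\Sigma_\cE$ by the Ostrowski-type bound $\inf\Sigma_\cE\le vp/(p-1)$ from \cite{KuRz}. The second equality $\{b\in K\mid vb\in (p-1)\Sigma_\cE\}=I_\cE^{p-1}\cap K$ is immediate from $I_\cE=\{a\in L\mid va\in\Sigma_\cE\}$ and the immediacy $vK=vL$.

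For the reverse inclusion, I would observe that $\tr_{L|K}(\cM_L)$ is an ideal of $\cO_K$ (contained in $\cO_K$ by the preceding paragraph, additively closed, and closed under $\cO_K$-multiplication via $\tr(\alpha b)=\alpha\tr b$), hence determined by the final segment of valuations it contains. It therefore suffices to realize every $\alpha\in (p-1)\Sigma_\cE$ as $v(\tr b)$ for some $b\in\cM_L$: given a decomposition $\alpha=\gamma_1+\cdots+\gamma_{p-1}$ with $\gamma_k\in\Sigma_\cE$, construct $b$ so that the successive ratios $v(\delta^k b/\delta^{k-1}b)$ equal $\gamma_k$, using the flexibility afforded by the structural analysis of $\Sigma_\cE$ in \cite{KuRz}; then $v(\tr b)=vb+\alpha$ and $vb$ can be tuned to hit the target.

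Finally, to obtain (\ref{trMM}) and the identification $H=H_\cE$: if $\cE$ has independent defect then $\Sigma_\cE=\{\alpha\in vL\mid\alpha>H_\cE\}$ by definition, and since $vL/H_\cE$ has no smallest positive element, every $\alpha>H_\cE$ splits into $p-1$ summands $>H_\cE$, so $(p-1)\Sigma_\cE=\Sigma_\cE$, and (\ref{defetraceeq}) gives $\tr_{L|K}(\cM_L)=\cM_{v_{H_\cE}}\cap K$. Conversely, if $\tr_{L|K}(\cM_L)=\cM_{v_H}\cap K$ for some strongly convex $H$, then $(p-1)\Sigma_\cE=\{\alpha>H\}$ by (\ref{defetraceeq}); using $(p-1)\Sigma_\cE\subseteq\Sigma_\cE$ and $(p-1)\gamma\in(p-1)\Sigma_\cE$ for any $\gamma\in\Sigma_\cE$, convexity of $H$ forces $\gamma>H$, so $\Sigma_\cE=\{\alpha>H\}$ and $\cE$ has independent defect with $H_\cE=H$. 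The supplementary assertions then fall out cleanly: $H_\cE\subseteq (vK)_{vp}$ because $\Sigma_\cE\subseteq (vK)_{vp}$ in mixed characteristic, so archimedeanness of $(vK)_{vp}$ forces $H_\cE=\{0\}$; and $vp\notin H_\cE$ yields $p\in\cM_{v_H}$ in mixed characteristic, giving $\chara Kv_H=p$. The main obstacle will be the reverse inclusion above: producing explicit witnesses $b$ with prescribed $v(\tr b)$ requires a careful use of the fine structure of $\Sigma_\cE$, while the bookkeeping of the $p$-divisible error terms in the mixed-characteristic expansion of $\tr$ constitutes a secondary technical hurdle.
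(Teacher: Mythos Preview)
Your operator-theoretic approach to the inclusion $\tr_{L|K}(\cM_L)\subseteq I_\cE^{p-1}\cap K$ via the identity $\tr=\delta^{p-1}$ in equal characteristic (plus $p$-divisible correction terms in mixed characteristic) is correct and genuinely different from the paper's. The paper instead fixes an Artin--Schreier generator $\vartheta$ (resp.\ a Kummer $1$-unit generator $\eta$), expands an arbitrary element of $\cM_L$ via the Taylor formula around a well-chosen $c\in K$ using Lemma~\ref{Kap}, and bounds the trace coefficient by coefficient via Lemma~\ref{trace}. Your route is more intrinsic and avoids the case split; a minor caveat is that your control of the mixed-characteristic error terms tacitly uses $\frac{vp}{p-1}\in\Sigma_\cE$, which the paper establishes only through (\ref{SigmaKum}), i.e.\ in the Kummer setting.

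The reverse inclusion, however, is a genuine gap in your proposal. You want to realize each $\alpha\in(p-1)\Sigma_\cE$ as $v(\tr b)$ by constructing $b\in\cM_L$ with prescribed successive ratios $v(\delta^k b/\delta^{k-1}b)=\gamma_k$, but these ratios are not independently tunable: once $b$ is fixed, the entire sequence $\delta b,\delta^2 b,\ldots$ is determined, and nothing in the bare definition of $\Sigma_\cE$ lets you arrange the whole chain to hit a prescribed sum. Your appeal to ``the flexibility afforded by the structural analysis of $\Sigma_\cE$ in \cite{KuRz}'' does not point to any concrete result that would deliver this. The paper's argument here is explicit and does rely on the generators: in the Artin--Schreier case one computes directly that $\tr_{L|K}\bigl(b(\vartheta-c)^{p-1}\bigr)=-b$ for every $c\in K$, so choosing $vb$ just above $-(p-1)v(\vartheta-c)$ places $b(\vartheta-c)^{p-1}$ in $\cM_L$ with trace of value $vb$; as $c$ ranges over $K$, the threshold $-(p-1)v(\vartheta-c)$ sweeps out $(p-1)\Sigma_\cE$ by (\ref{SigmaAS}). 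The Kummer case is parallel, using $\tr_{L|K}\bigl(b(\eta-c)^{p-1}\bigr)=pb(-c)^{p-1}$ together with (\ref{SigmaKum}). Without such explicit witnesses your reverse inclusion does not go through.

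Your deduction of (\ref{trMM}) and of $H=H_\cE$ from (\ref{defetraceeq}) is correct and essentially parallel to the paper's argument via Lemma~\ref{SD} and Proposition~\ref{ind=I^p=I}.
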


\parm
In \cite{CG} the authors introduce the notion of {\it deeply ramified extensions}
$(L|K,v)$ where $(K,v)$ is a local field and $L|K$ is algebraic. 
It follows from
\cite[Proposition 2.9]{CG} that under the conditions above, $(L|K,v)$ is a
deeply ramified extension if and only if $\tr_{F|L} \left(\cM_F\right)=\cM_L$ for
every finite extension $F|L$ (note that as an algebraic extension of a local field, 
$(L,v)$ is henselian, and so the extension of $v$ to $F$ is uniquely determined); 
see also \cite[Theorem~1.1]{F}. Theorem~\ref{defetrace} shows that the equality  
$\tr_{F|L} \left(\cM_F\right)=\cM_L$
will in general not characterize deeply ramified fields, unless they have rank 1
(as is the case for algebraic extensions of local fields). 

\pars
The \bfind{different} of $(L|K,v)$ is $\cD(\cO_L|\cO_K):=\cO_L:_L \cC(\cO_L|\cO_K)$,
where $\cC(\cO_L|\cO_K):=(z\in L\mid \tr(z\cO_L)\subseteq\cO_K)$ is the fractional
$\cO_L$-ideal called the \bfind{complementary ideal}; see \cite[Ch.~V, \S11]{ZSI}. For
$b\in \cO_L$ and $h_b$ its minimal polynomial over $K$, the element $h'_b(b)\in \cO_L$ is
called the \bfind{different} of $b\,$.

We denote the annihilator of an 
$\cO_L$-ideal $I$ by $\ann I$. 
In Section~\ref{sectDd}, we will prove:
\begin{theorem}                             \label{Dd}
Assume that the extension $\cE=(L|K,v)$ satisfies (\ref{as}). 
\sn
1) The following $\cO_L$-ideals are equal:
\n
i) \ $I_\cE^{p-1}$,
\n
ii) \ the ideal $V$ from Theorem~\ref{LimProp}  when applied to $L|K$ with $A=\cO_K$ and
$B=\cO_L\,$,
\n
iii) \ the ideal generated by the differents of all elements of $\cO_L
\setminus\cO_K\,$,
\n
iv) \ the ideal generated by the elements of $\tr_{L|K} \left(\cO_L\right)$.
\sn
2) We have that $\cD(\cO_L|\cO_K)=I_\cE^{p-1}$ if and only if $vI_\cE^{p-1}$ has no 
infimum in $vL$. If $vI_\cE^{p-1}$ has infimum $va$ in $vL$ for some $a\in L$, then 
$\cD(\cO_L|\cO_K)=a\cO_L\ne I_\cE^{p-1}$ and $I_\cE^{p-1}=\cM_L\,\cD(\cO_L|\cO_K)$. 
\sn
3) \ If $(K,v)$ has rank $1$, then $\cD(\cO_L|\cO_K)=\ann\Omega_{\cO_L|\cO_K}\,$.
\sn
4) \ If the extension $\cE$ has independent defect, then $\cD(\cO_L|\cO_K)=
\cO_L=\ann\Omega_{\cO_L|\cO_K}$ if $H_{\cE}=\{0\}$, and $\cD(\cO_L|\cO_K)=
\cM_{v_{H_\cE}}\subsetneq\cO_L=\ann\Omega_{\cO_L|\cO_K}$
otherwise. Conversely, if $\cD(\cO_L|\cO_K)$ is equal to $\cO_L$ or to $\cM_{v_H}$ for a 
nontrivial strongly convex subgroup $H$ of $vL$, then $\cE$ has independent defect.
\end{theorem}

The reader may note that in rank higher than $1$, $\cD(\cO_L|\cO_K)$ is not necessarily
equal to the annihilator of $\Omega_{\cO_L|\cO_K}\,$; this fact has already been observed 
in \cite{Th1}. For the computation of annihilators 
in arbitrary rank, see Section~\ref{sectann}. For the 
different of a finite unibranched extension $(L|K,v)$ with $vL=vK$, see 
Theorem~\ref{prop}. 

\parm
Finally, let us give further characterizations of independent defect, and in particular
show that it can be characterized by a simple condition in $(K,v)$ (which is an 
important point in \cite{JK}).
If $z$ is an element and $S$ a subset of some valued field $(L,v)$, then we set
\[
v(z-S)\>:=\>\{v(z-c)\mid c\in S\}.
\]
We will write $\wp(X)$ for the Artin-Schreier polynomial $X^p-X$.
\begin{theorem}                              \label{eqindep2AS}
Assume that $\cE=(L|K,v)$ is an Artin-Schreier defect extension generated by $\vartheta
\in L$ with $\vartheta^p-\vartheta=a\in K$. Then $v(\vartheta-c)<0$ and $v(a-\wp(c))=
pv(\vartheta-c)$ for all $c\in K$, and therefore,
\begin{equation}                                 \label{1.4eq1}
v(a-\wp(K))\>=\>pv(\vartheta-K)\>. 
\end{equation}
Further, the following are equivalent:
\sn
a) $\cE$ has independent defect,
\sn
b) $v(\vartheta-K)=-\{\alpha\in vK\mid \alpha >H\}$ for some strongly 
convex subgroup $H$ of $vK$,
\sn
c) $v(a-\wp(K))=-\{\alpha\in pvK\mid \alpha >H\}$ for some strongly 
convex subgroup $H$ of $vK$,
\sn
d) there is some strongly convex subgroup $H$ of $vK$ such that for all $b\in K$ such
that $vb>H$ there is $c\in K$ satisfying $v(a-\wp(c))\geq -vb$.
\sn
If $vK$ is $p$-divisible, then these properties are also equivalent to 
\sn
e) $pv(\vartheta-K)=v(\vartheta-K)$,
\sn
f) $v(a-\wp(K))=v(\vartheta-K)$.
\sn
If $(K,v)$ has rank $1$, then condition d) is equivalent to:
\sn
g) for all $b\in \cO_K$ there is $c\in K$ satisfying $v(a-\wp(c))\geq -vb$. 
\end{theorem}

\pars
For the notion of ``$1$-unit'', see Section~\ref{sectASKext}.
\begin{theorem}                              \label{eqindep2K}
Assume that $\cE$ is a Kummer defect extension of prime degree $p$. Then it is 
generated by a $1$-unit $\eta
\in L$ with $\eta^p=a\in K$. We have $v(\eta-c)<\frac{1}{p-1}vp$ and $v(a-c^p)=
pv(\eta-c)$ for all $c\in K$, and therefore,
\begin{equation}                                 \label{1.4eq2}
v(a-K^p)\>=\>pv(\eta-K)\>. 
\end{equation}
Further, the following are equivalent:
\sn
a) $\cE$ has independent defect,
\sn
b) $v(\eta-K)=\frac{1}{p-1}vp-\{\alpha\in vK\mid \alpha >H\}$ for some strongly 
convex subgroup $H$ of $vK$,
\sn
c) $v(a-K^p)=\frac{p}{p-1}vp-\{\alpha\in pvK\mid \alpha >H\}$ for some strongly 
convex subgroup $H$ of $vK$,
\sn
d) there is some strongly convex subgroup $H$ of $vK$ such that for all $b\in K$ such
that $vb>H$ there is $c\in K$ satisfying $v(a-c^p)\geq \frac{p}{p-1}vp-vb$.
\sn
If $vK$ is $p$-divisible, then these properties are also equivalent to 
\sn
e) $pv(\eta-K)=vp+v(\eta-K)$,
\sn
f) $v(a-K^p)=vp+v(\eta-K)$.
\sn
If $(K,v)$ has rank $1$, then condition d) is equivalent to:
\sn
g) for all $b\in \cO_K$ there is $c\in K$ satisfying $v(a-c^p)\geq \frac{p}{p-1}vp-vb$. 
\end{theorem}

\begin{remark}
Assume the situation of this theorem. Since $K$ contains a primitive root 
$\zeta_p$ of unity and $v(1-\zeta_p)=\frac{1}{p-1}vp$, assertion b) is equivalent to  
\sn
b') $v(\frac{\eta}{1-\zeta_p}-K)=-\{\alpha\in vK\mid \alpha >H\}$ for some strongly 
convex subgroup $H$ of $vK$.
\end{remark}

We note that conditions g) in the two theorems are elementary in the language of 
valued rings. It has been pointed out to us by Arno Fehm that in fact also conditions 
d) are elementary, as it can be stated in the language of valued rings that in the
setting of Theorem~\ref{eqindep2AS}, the set 
\[
\{\alpha\in vK\mid v(a-\wp(K))\,<\,\alpha\,<\,-v(a-\wp(K))\} 
\]
and in the setting of Theorem~\ref{eqindep2K}, the set
\[
\left\{\alpha\in vK \>\left|\> v(a-K^p)-\frac{p}{p-1}vp\,<\,\alpha\,<\,-v(a-K^p)
+\frac{p}{p-1}vp\right.\right\} 
\]
is a strongly convex subgroup $H$ of $vK$.

\pars
The following can be deduced from Theorems~\ref{eqindep2AS} and~\ref{eqindep2K}; 
for details, see \cite{KuRz2}. 
Take a deeply ramified field $(K,v)$ of rank $1$ (e.g.\ a perfectoid field) with
$\chara Kv=p>0$. If it is of equal characteristic, then it satisfies the sentence
\[
\forall a,b\in \cO_K \,\exists c\in K:\>v(a-\wp(c))\>\geq\> -vb\>;
\]
likewise, if it is of mixed characteristic, then it satisfies the sentence
\[
\forall a,b\in \cO_K \,\exists c\in K:\>v(a-c^p)\>\geq\> \frac{p}{p-1}vp-vb\>.
\]
Hence the same holds for every elementary extension $(K^*,v^*)$
of $(K,v)$. Assume that $v^*$ admits a proper nontrivial coarsening $w$. Then 
$(K^*,w)$ satisfies the sentence
\[
\forall a\in \cO_{(K^*,w)} \,\exists c\in K^*:\>w(a-\wp(c))\>\geq\> 0
\]
or
\[
\forall a\in \cO_{(K^*,w)} \,\exists c\in K^*:\>w(a-c^p)\>\geq\> \frac{p}{p-1}wp\>,
\]
respectively, which shows that every Artin-Schreier extension or Kummer extension  
of degree $p$, respectively, either lies in the henselization of $(K^*,w)$ or is tame
(see the definition given in \cite{KuRz}); in particular, they are defectless. This 
is also a consequence of results obtained in \cite{JK}, by methods different from 
those used in \cite{KuRz2}.

\parb
In \cite{KuRz2} we will give constructions that will show that all the situations
mentioned in the above theorems can appear. To conclude this introduction,
let us give an example of an interesting extension with independent defect.
\begin{example}
Choose a prime $p$ and let $\F_p$ denote the field with $p$ elements.
Consider the rational function field $\F_p(t)$, equipped with the $t$-adic
valuation $v_t$, and its perfect hull $K=\F_p(t)(t^{1/p^n}\mid n\in \N)$, equipped
with the unique extension of $v_t\,$. Take a root $\vartheta$ of the Artin-Schreier
polynomial
\[
X^p-X-\frac{1}{t}\>.
\] 
The extension $(K(\vartheta)|K,v_t)$ was presented by Shreeram Abhyankar in 
\cite{[Ab1]}. It became famous since it shows that there are elements algebraic over
$\F_p(t)$ with a power series expansion in which the exponents do not
have a common denominator. However, this extension is also an important example for
an extension with nontrivial defect. The situation remains the same if we replace 
$\F_p(t)$ by the field $\F_p((t))$ of formal Laurent series (see 
\cite[Example 3.12]{Ku31}). 

In both cases, $(K,v_t)$ is a deeply ramified field, as it is perfect of positive
characteristic. Hence by Theorem~\ref{drinddef}, the extension $(K(\vartheta)|K,v_t)$
has independent defect. Now Theorem~\ref{eqindep} shows that 
$\Omega_{\cO_{K(\vartheta)}|\cO_K}=0$.
\end{example}

\bn
%
%
\section{Preliminaries}
%
%
\subsection{Defect}                      \label{sectdef}
\mbox{ }\sn
A valued field extension $(L|K,v)$ is \bfind{unibranched} if the
extension of $v$ from $K$ to $L$ is unique. Note that a unibranched extension is
automatically algebraic, since every transcendental extension always admits several
extensions of the valuation.

If $(L|K,v)$ is a finite unibranched extension, then by the Lemma of Ostrowski
\cite[Corollary to Theorem~25, Section G, p.~78]{ZS2}),
\begin{equation}                    \label{feuniq}
[L:K]\>=\> \tilde{p}^{\nu }\cdot(vL:vK)[Lv:Kv]\>,
\end{equation}
where $\nu$ is a non-negative integer and $\tilde{p}$ the
\bfind{characteristic exponent} of $Kv$, that is, $\tilde{p}=\chara Kv$ if it is 
positive and $\tilde{p}=1$ otherwise. The factor $d(L|K,v):=\tilde{p}^{\nu }$ is 
the \bfind{defect} of the extension $(L|K,v)$. We call $(L|K,v)$ a \bfind{defect
extension} if $d(L|K,v) >1$, and a \bfind{defectless extension} if $d(L|K,v)=1$.
Nontrivial defect only appears when $\chara Kv=p>0$, in which case $\tilde{p}=p$.
A valued field $(K,v)$ is \bfind{henselian} if it satisfies Hensel's Lemma, or 
equivalently, if all of its algebraic extensions are unibranched.
A henselian field $(K,v)$ is called a \bfind{defectless field} if all of its finite
extensions are defectless.

Throughout this paper, when we talk of a \bfind{defect extension $(L|K,v)$ of prime
degree}, we will always tacitly assume that it is a unibranched extension. Then it
follows from (\ref{feuniq}) that $[L:K]=p=\chara Kv$ and that $(vL:vK)=1=[Lv:Kv]$; 
the latter means that $(L|K,v)$ is an \bfind{immediate extension}, i.e.,
the canonical embeddings $vK\hookrightarrow vL$ and $Kv\hookrightarrow Lv$ are onto.

In order to reduce arbitrary finite defect extensions to purely inseparable extensions 
and Galois extensions of degree $p=\chara Kv>0$, we fix an extension of $v$ from $K$ 
to its algebraic closure $\tilde{K}$. 
The \bfind{absolute ramification field of $(K,v)$} (with respect to the chosen extension 
of $v$), denoted by $(K^r,v)$, is the ramification field of the normal extension 
$(K\sep|K,v)$. If $a\in\tilde{K}$ is such that $(K(a)|K,v)$ is a defect extension, then 
$(K^r(a)|K^r,v)$ is a defect extension with the same defect (see 
\cite[Proposition~2.12]{KuRz}). On the other hand, $K\sep|K^r$ is a $p$-extension (see 
\cite[Lemma 2.7]{Ku30}), so $K^r(a)|K^r$ is a tower of
purely inseparable extensions and Galois extensions of degree $p$.

\mn
%
%
\subsection{Final segments}                      \label{sectcdist}
\mbox{ }\sn
We recall basic notions and facts connected with final segments in ordered abelian 
groups, and how they relate to elements in valued field extensions. For the details and
proofs see Section 2.3 of \cite{Ku30} and Section 3 of \cite{KuVl}.

Take a totally ordered set $(T,<)$. For a nonempty subset $S$ of $T$ and an element 
$t\in T$ we will write $t<S$ if $t<s$ for every $s\in S$, and similarly for $>$ in
place of $<$. A set $S\subseteq T$ is called 
an \bfind{initial segment} of $T$ if for each $s\in S$ every $t <s$ also lies in $S$.
Similarly, $S\subseteq T$ is called a \bfind{final segment} of $T$ if for each $s\in S$ 
every $t >s$ also lies in $S$. By definition, final and initial segments are convex. If 
$S$ is a final segment of $T$, then $T\setminus S$ is an initial segment, and vice versa. 

For a subset $M$ of $T$ we define 
\[
M^+\>:=\> \{t\in T\mid t>M\} \quad\mbox{ and }\quad M^-\>:=\> \{t\in T\mid \exists\, 
m\in M \,\,t\geq m\}\>.
\]

Take a subset $S$ in an ordered abelian group $\Gamma$, an element $\gamma\in\Gamma$, and 
$n\in\N_{>0}\,$. We set $-S:=\{-s\mid s\in S\}$, $\gamma+S:=\{\gamma+s\mid s\in S\}$, and 
$nS:=\{ns\mid s\in S\}$. We note that if $S$ is a final segment of $\Gamma$, then $-S$ is 
an initial segment of $\Gamma$, and $\gamma+S$ is a final segment of $\Gamma$. If
$\Gamma$ is divisible by $n$ and $S$ is a final segment of $\Gamma$, then also 
$nS$ is a final segment of $\Gamma$. Further, we note that
\begin{equation}                    \label{g+cut}
(\gamma+S)^+ \>=\>\gamma+S^+ \quad\mbox{ and }\quad (\gamma+S)^- \>=\>\gamma+S^-\>.
\end{equation}

\mn
%
%
\subsection{Strongly convex subgroups}     \label{sectcsac}
\mbox{ }\sn
%

\begin{lemma}                           \label{SD}
Take an ordered abelian group $(\Gamma,<)$, a nonempty final segment $\Sigma$ of 
$\Gamma^{\geq 0}:=\{\gamma\in \Gamma\mid \gamma\geq 0\}$, and $m\in\N$, $m\geq 2$. 
Assume that $\Sigma\ne\Gamma^{\geq 0}$. Then the following assertions are equivalent:
\sn
a) there is a strongly convex subgroup $\Delta$ of $\Gamma$ such that $\Sigma=
\Gamma^{\geq 0}\setminus\Delta$,
\sn
b) $\Sigma=(m\Sigma)\fs$.
\end{lemma}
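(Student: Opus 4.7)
My plan is to prove the cycle (c)$\Rightarrow$(b)$\Rightarrow$(a)$\Rightarrow$(c). The implication (c)$\Rightarrow$(b) is trivial upon specializing to $n=m$, and one inclusion in each of the other two implications is automatic: since $\sigma\in\Sigma\subseteq\Gamma^{\geq 0}$ and $n\geq 1$ imply $n\sigma\geq\sigma\in\Sigma$, we always have $n\Sigma\subseteq\Sigma$, hence $(n\Sigma)\fs\subseteq\Sigma$.

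For (b)$\Rightarrow$(a) I would define
\[
\Delta\>:=\>\{\gamma\in\Gamma\mid |\gamma|\notin\Sigma\}
\]
and show that it is a convex subgroup, after which the equality $\Sigma=\Gamma^{\geq 0}\setminus\Delta$ is immediate from the definition. Symmetry of $\Delta$ and convexity are clear. The crucial step, where (b) is used, is closure under addition: if $x,y\in\Delta\cap\Gamma^{\geq 0}$ but $x+y\in\Sigma$, then (b) provides $\tau\in\Sigma$ with $m\tau\leq x+y$; since $x$ and $y$ lie below every element of $\Sigma$, they satisfy $x,y<\tau$, so using $m\geq 2$ and $\tau\geq 0$ one reaches the contradiction $x+y<2\tau\leq m\tau\leq x+y$. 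The cases where $\alpha,\beta\in\Delta$ have opposite signs are handled by the elementary estimate $|\alpha+\beta|\leq\max(|\alpha|,|\beta|)$. To rule out a smallest positive element $\bar\epsilon$ of $\Gamma/\Delta$, I would lift it to $\epsilon\in\Sigma$, apply (b) to obtain $\tau\in\Sigma$ with $m\tau\leq\epsilon$, and observe that by minimality the positive image $\bar\tau$ satisfies $\bar\tau\geq\bar\epsilon$; hence $m\bar\tau\geq m\bar\epsilon>\bar\epsilon$, contradicting $m\bar\tau\leq\bar\epsilon$.

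For (a)$\Rightarrow$(c), given $\sigma\in\Sigma$ I must produce $\tau\in\Sigma$ with $n\tau\leq\sigma$. The image $\bar\sigma$ in $\Gamma/\Delta$ is positive, and it suffices to find $\bar\tau>0$ in $\Gamma/\Delta$ with $n\bar\tau<\bar\sigma$, since any positive lift $\tau$ then automatically lies in $\Sigma$ and satisfies $n\tau<\sigma$. For this I would prove as a sub-lemma that in any ordered abelian group $G$ without smallest positive element, for every positive $g\in G$ and every $n\in\N$ there exists a positive $h\in G$ with $nh<g$. The sub-lemma I would prove by case analysis on the archimedean component $\cA_G(g)=\cC_G(g)/\cC_G^+(g)$: if $\cA_G(g)$ is not discrete it embeds densely in $\R$ and the conclusion is immediate there; if it is isomorphic to $\Z$, then $\cC_G^+(g)$ must itself be nontrivial and have no smallest positive element (otherwise $G$ would inherit one, contradicting the hypothesis), so it contains a positive $h$, and then $nh\in\cC_G^+(g)$ gives $nh<g$ since $g$ dominates the entire convex subgroup $\cC_G^+(g)$.

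The main obstacle I foresee is the closure-under-addition argument in (b)$\Rightarrow$(a), since this is the one place where the iteration property of (b) is genuinely exploited; everything else in that implication reduces to bookkeeping once $\Delta$ is known to be a subgroup. The sub-lemma in (a)$\Rightarrow$(c) is a secondary difficulty: ``no smallest positive element'' is at face value weaker than the divisibility-like conclusion $nh<g$, and bridging the gap requires the archimedean-component case analysis sketched above.
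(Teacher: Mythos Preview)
Your proposal is correct and follows the same cycle a)$\Rightarrow$c)$\Rightarrow$b)$\Rightarrow$a) as the paper, with the same definition of $\Delta$ and essentially the same closure-under-addition and no-smallest-positive arguments in (b)$\Rightarrow$(a). The only real difference is in (a)$\Rightarrow$(c): where you prove the sub-lemma via an archimedean-component case split, the paper simply asserts that if $\Gamma/\Delta$ has no smallest positive element then its positive elements cannot be bounded below by any element of the divisible hull, and immediately reads off $0<\bar\beta<\frac{1}{n}\bar\alpha$; this is the same fact, stated more tersely. (One can also avoid the case split by a direct telescoping argument: if every positive $h$ satisfied $nh\geq g$, then any strictly decreasing chain $g>h_0>h_1>\cdots>0$ would have $h_i-h_{i+1}\geq\frac{1}{n}g$, forcing $h_0\geq\frac{k}{n}g$ for all $k$, which is impossible.)
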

\begin{proof}
a)$\Rightarrow$b): Since $\Sigma$ is a final segment of $\Gamma^{\geq 0}$, we know that 
$(m\Sigma)\fs\subseteq\Sigma$. Hence in order to prove that $\Sigma=(m\Sigma)\fs$, we 
have to show the reverse inclusion. For this, we will show that
for every $\alpha\in\Sigma$ there is $\beta\in\Sigma$ such that $m\beta<\alpha$. 
The equality $\Sigma=\Gamma^{\geq 0}\setminus\Delta$ implies that $\Sigma=\{\gamma\in 
\Gamma\mid \gamma >\Delta\}$, hence we have $\alpha+\Delta>0$ in $\Gamma/\Delta$. 
As $\Gamma/\Delta$ has no smallest positive element, the positive elements cannot be 
bounded away from $0$ by an element in the divisible hull of $\Gamma/\Delta$, and so 
there must be $\beta\in\Gamma$ such that $0<\beta+\Delta<\frac 1 m (\alpha+\Delta)$. 
This implies $m\beta +\Delta<\alpha+\Delta$, whence $m\beta<\alpha$, as desired. 
\sn
b)$\Rightarrow$a): Since $\Sigma\ne\Gamma^{\geq 0}$ is a nonempty final segment of 
$\Gamma^{\geq 0}$, we see that $\Gamma^{\geq 0}\setminus \Sigma$ is nonempty and
convex. We prove that $\Gamma^{\geq 0}\setminus \Sigma$ is additively closed. Take 
$\gamma,\gamma'\in \Gamma^{\geq 0}\setminus\Sigma$; we may assume that $\gamma'\leq
\gamma$. Hence $0\leq\gamma+\gamma'\leq m\gamma$. Suppose that $m\gamma\in\Sigma$. Then 
by the statement of b), there is $\beta\in\Sigma$ such that $m\beta\leq m\gamma$,
whence $\beta\leq\gamma$. From the convexity of $\Sigma$ it follows that $\gamma\in
\Sigma$, contradiction. Thus $m\gamma\in\Gamma^{\geq 0}\setminus \Sigma$, and by the
convexity of $\Gamma^{\geq 0}\setminus \Sigma$, we obtain that
$\gamma+\gamma'\in\Gamma^{\geq 0}\setminus \Sigma$. This
proves our claim.

Since $\Gamma^{\geq 0}\setminus \Sigma$ is convex, also $\Delta:=\{\gamma,-\gamma
\mid \gamma\in\Gamma^{\geq 0}\setminus \Sigma\}$ is convex. Since $\Gamma^{\geq 0}
\setminus \Sigma$ is additively closed, it follows by convexity that $\Delta$ is 
additively closed, hence a convex subgroup of $\Gamma$.

Take any $\alpha\in\Gamma$ such that $0<\alpha+\Delta$. It follows that
$0<\alpha\notin\Delta$, hence $\alpha\in\Sigma$. Take $\beta\in\Sigma$ such that
$m\beta\leq\alpha$. We have that $\beta+\Delta<m\beta+\Delta$ since otherwise, 
$(m-1)\beta\in\Delta$ and by convexity, $\beta\in \Delta$, contradiction. Thus 
$0<\beta+\Delta<m\beta+\Delta\leq \alpha+\Delta$. This shows that $\alpha+\Delta$
is not the smallest positive element of $\Gamma/\Delta$. As $\alpha+\Delta$ was 
arbitrary, we conclude that $\Gamma/\Delta$ has no smallest positive element.
\end{proof}

\mn
%
%
%
\subsection{Immediate and unibranched extensions}        \label{sectimmdeflext}
\mbox{ }\sn
Take a valued field $(K,v)$. A \bfind{henselization} of $(K,v)$ is an algebraic 
extension of $(K,v)$ which admits a valuation preserving embedding in every other 
henselian extension of $(K,v)$. Henselizations always exist and are unique up to 
valuation preserving isomorphism over $K$; therefore we will talk of {\it the} 
henselization of $(K,v)$ and denote it by $(K,v)^h=(K^h,v^h)$. The henselization of 
$(K,v)$ is an immediate separable-algebraic extension.
\pars
In what follows, we will need the following result, which is \cite[Lemma 2.1]{BlKu}:
\begin{lemma}	                 	\label{lindishens}
A finite extension $(L|K,v)$ is unibranched if and only if $L|K$ is linearly disjoint 
from some (equivalently, every) henselization of $(K,v)$.
\end{lemma}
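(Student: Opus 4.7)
The plan is to reduce to the separable case and then analyze the factorization of a minimal polynomial over $K^h$. Let $L_s$ denote the separable-algebraic closure of $K$ in $L$, so that $L|L_s$ is purely inseparable. Purely inseparable extensions admit a unique extension of any valuation, so $(L|K,v)$ is unibranched if and only if $(L_s|K,v)$ is unibranched. Moreover, because $K^h|K$ is separable, $L$ and $L_sK^h$ are linearly disjoint over $L_s$, whence $[LK^h:K^h]=[L:L_s]\cdot[L_sK^h:K^h]$; comparing with $[L:K]=[L:L_s][L_s:K]$ shows that $L|K$ is linearly disjoint from $K^h$ if and only if $L_s|K$ is. We may therefore assume that $L|K$ is finite separable.

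Write $L=K(\theta)$ via a primitive element and let $f\in K[X]$ be the minimal polynomial of $\theta$ over $K$. Factor $f=f_1\cdots f_g$ into monic irreducibles over $K^h$. A standard consequence of Hensel's lemma is that the extensions of $v$ from $K$ to $L$ are in bijection with the factors $f_i$: for a root $\theta_i\in\tilde K$ of $f_i$, the composite $K(\theta_i)\cdot K^h\cong K^h[X]/(f_i)$ is a henselization of $L$ with respect to one such extension $v_i$, and every extension arises this way. Thus $(L|K,v)$ is unibranched if and only if $g=1$. On the other hand, $L|K$ is linearly disjoint from $K^h$ if and only if $[LK^h:K^h]=[L:K]=\deg f$, and $[LK^h:K^h]$ equals the degree of the minimal polynomial of $\theta$ over $K^h$, i.e.\ the degree of the unique factor $f_i$ with $f_i(\theta)=0$; since $\sum_j\deg f_j=\deg f$, this happens precisely when $g=1$. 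The two characterisations agree, yielding the equivalence.

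The ``some (equivalently, every)'' clause is automatic: any two henselizations of $(K,v)$ are $K$-isomorphic as valued fields, and both $[LK^h:K^h]$ and the number of extensions of $v$ to $L$ depend only on the $K$-isomorphism type of $K^h$. The main obstacle is the bijection between irreducible factors of $f$ over $K^h$ and extensions of $v$ to $L$: it relies on $K^h$ being henselian (so each $K^h[X]/(f_i)$ carries a unique valuation extending $v^h$) and on $K^h|K$ being immediate (so no extension of $v$ is lost or gained in the passage to $LK^h$). Once this standard input is granted, the remainder of the argument is a direct count of degrees.
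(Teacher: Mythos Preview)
The paper does not actually prove this lemma; it simply records it as \cite[Lemma~2.1]{BlKu}. Your argument is a correct self-contained proof. The reduction to the separable case via $L_s$ is clean (purely inseparable and separable extensions over the same base are linearly disjoint, and purely inseparable extensions admit unique valuation prolongations), and the core step---identifying both ``unibranched'' and ``linearly disjoint from $K^h$'' with the condition $g=1$ in the factorisation $f=f_1\cdots f_g$ over $K^h$---is the standard route. The bijection you invoke between extensions of $v$ to $L$ and irreducible factors of $f$ over $K^h$ is indeed a well-known consequence of the universal property of the henselization (or of the isomorphism $L\otimes_K K^h\cong\prod_i K^h[X]/(f_i)$ together with henselianity of each factor), so citing it as standard input is appropriate.
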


Let us consider an immediate but not necessarily algebraic extension   
$(K(x)|K,v)$. Then by \cite[Theorem 2.19]{Ku30} the set $v(x-K)\subseteq vK$ is a
final segment of $vK$; in particular, it
has no maximal element. If $g\in K[X]$ and there is $\alpha\in v(x-K)$ such that 
for all $c\in K$ with $v(x-c)\geq \alpha$ the value $vg(c)$ is constant, then we will 
say that \bfind{ the value of $g$ is ultimately fixed over $K$}. We call $(K(x)|K,v)$ 
\bfind{pure in $x$} if the value of every $g(X)\in K[X]$ of degree smaller than 
$[K(x):K]$ is ultimately fixed over $K$. Note that we set
$[K(x):K]=\infty$ if $x$ is transcendental over $K$.
\begin{lemma}                       \label{lp}
Every unibranched immediate extension $(K(x)|K,v)$ of prime degree is pure in $x$. 
\end{lemma}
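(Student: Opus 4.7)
The plan is to reduce Lemma~\ref{lp} to the following key claim (K): for every $b\in\tilde K$ with $[K(b):K]<p$ there exists $c\in K$ with $v(x-b)\leq v(x-c)$. Granting (K) the lemma follows directly. Factor $g(X)=a\prod_{i=1}^d(X-b_i)$ over $\tilde K$ with $a\in K^\times$ and $d=\deg g<p$; each root $b_i$ has $[K(b_i):K]\leq d<p$, so (K) provides an upper bound for each $v(x-b_i)$ inside $v(x-K)$. Since $v(x-K)$ has no largest element by Theorem~2.19 of \cite{Ku30}, we may pick $c^*\in K$ with $v(x-c^*)$ strictly greater than every $v(x-b_i)$. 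Then for any $c\in K$ with $v(x-c)\geq v(x-c^*)$ the ultrametric triangle equality yields $v(c-b_i)=v\bigl((x-b_i)-(x-c)\bigr)=v(x-b_i)$, so $vg(c)=va+\sum_i v(x-b_i)$ is independent of $c$, witnessing that the value of $g$ is ultimately fixed over $K$.

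The plan for (K) is a contradiction argument via pseudo-convergent sequences. Assume $v(x-b)>v(x-c)$ for all $c\in K$ and $n:=[K(b):K]<p$. By Theorem~2.19 of \cite{Ku30}, choose a sequence $(c_n)\subseteq K$ with $v(x-c_n)$ strictly increasing; this is a pseudo-convergent sequence in $(K,v)$ with pseudo-limit $x$. The hypothesis gives $v(b-c_n)=v((x-c_n)-(x-b))=v(x-c_n)$, so $b$ is also a pseudo-limit of $(c_n)$. Let $f\in K[X]$ be the minimal polynomial of $x$, of degree $p$. Unibranchedness forces all $K$-conjugates $x_1,\ldots,x_p$ of $x$ in $\tilde K$ to give the same restriction of $v$ to $K(x)$, so $v(x_i-c)=v(x-c)$ for every $c\in K$; consequently $vf(c)=\sum_i v(c-x_i)=p\,v(x-c)$ for all $c\in K$, and in particular $vf(c_n)$ is strictly increasing, making $(c_n)$ of algebraic type.

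Let $\phi\in K[X]$ be the associated minimal polynomial of $(c_n)$, i.e.\ the monic polynomial of smallest degree with $v\phi(c_n)$ eventually strictly increasing. A nontrivial factorization $\phi=\phi_1\phi_2$ would make $v\phi_j(c_n)$ eventually strictly increasing for some $j$, contradicting the minimality of $\deg\phi$; hence $\phi$ is irreducible over $K$. The standard pseudo-convergent machinery (as developed in \cite{Ku30}) shows that every algebraic pseudo-limit of $(c_n)$ is a root of $\phi$. Applied to $x$, this gives $\phi(x)=0$; combined with $\deg\phi\leq p=\deg f$ (since $f$ itself has the required increasing-value property) and irreducibility, we deduce $\phi=f$ up to a scalar. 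Applied to $b$, this gives $f(b)=0$, so $[K(b):K]=p$, contradicting $n<p$. This establishes (K), and hence the lemma.

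The main obstacle is invoking the pseudo-convergent machinery—the existence of the associated minimal polynomial, its irreducibility, and its vanishing on algebraic pseudo-limits—in the full generality needed here (mixed characteristic, possibly inseparable $f$); these tools come from \cite{Ku30}. The inseparable case deserves a brief check: if $x$ is purely inseparable of degree $p$ over $K$, then $f(X)=(X-x)^p$ in $\tilde K[X]$, so the identity $vf(c)=p\,v(x-c)$ holds trivially, and $f(b)=0$ again forces $b=x$, contradicting $[K(b):K]<p$.
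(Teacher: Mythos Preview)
Your reduction of the lemma to claim (K) is correct. The gap is in your proof of (K): the assertion that every algebraic pseudo-limit of $(c_n)$ is a root of the associated minimal polynomial $\phi$ is not part of Kaplansky's theory, and in the situation at hand it is demonstrably false. Run the same computation you performed for $f$ on the minimal polynomial $h$ of $b$ instead: writing $h(c_n)=\prod_j(c_n-b_j)$ over $\tilde K$, the standard dichotomy says each $v(c_n-b_j)$ is eventually constant or eventually equal to $\gamma_n:=v(c_n-c_{n+1})$; since $b$ is one of the $b_j$ and is a pseudo-limit, at least one factor contributes $\gamma_n$, so $vh(c_n)$ is eventually strictly increasing. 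Hence $\deg\phi\le\deg h=[K(b):K]<p$, which forces $\phi\ne f$ and $\phi(x)\ne 0$, and your contradiction evaporates. (More globally: whenever $(c_n)$ has more than one pseudo-limit in $\tilde K$ it has a whole ball of them, so they cannot all be roots of a single polynomial.) What is actually required is a reason why, under \emph{unibranchedness}, no algebraic element of degree $<p$ can approximate $x$ better than $K$ does, and this does not fall out of the associated minimal polynomial alone.

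The paper provides that reason by a different route: it passes to the henselization. By Lemma~\ref{lindishens}, $K(x)|K$ is linearly disjoint from $K^h|K$, so $(K^h(x)|K^h,v)$ is again an immediate extension of degree $p$; the henselian case is \cite[Proposition~6.5]{KuVl}. One then descends via \cite[Theorem~2]{Ku34}, which says that no element of $K^h$ approximates $x$ better than elements of $K$ do---so a polynomial $g\in K[X]$ of degree $<p$ whose value is not ultimately fixed over $K$ would fail over $K^h$ as well, contradicting the henselian case. Your claim (K) is in fact true and can be established in the henselian setting via a Krasner-type argument, but that is precisely the nontrivial content being imported from \cite{KuVl} and \cite{Ku34}, not a formality of pseudo-convergence.
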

\begin{proof}
By the Lemma of Ostrowski, every such extension must have a defect equal to its degree, 
which must be equal to its residue characteristic $p>0$. 

In the case of a henselian field $(K,v)$, the assertion follows from \cite[Proposition 
6.5]{KuVl}. We have to consider the general case. Since $(K(x)|K,v)$ is unibranched, 
Lemma~\ref{lindishens} shows that $K(x)|K$ is linearly disjoint from $K^h|K$. Hence 
$(K^h(x)|K^h,v)$ is a unibranched extension of degree $p$, and it is immediate since 
$K^h(x)=K(x)^h$ and henselizations are immediate extensions. By what we have said above,
the extension $(K^h(x)|K^h,v)$ is pure.

Further, it follows from \cite[Theorem 2]{Ku34} that there is no element $b\in K^h$ such 
that $v(x-b)>v(x-K)$. Thus, if there were a polynomial $g\in K[X]$ of degree less than 
$p$ whose value is not ultimately fixed over $K$, then its value 
would also not be ultimately fixed over $K^h$, contradiction.
\end{proof}

The next lemma follows from \cite[Lemma~8]{Ka} and \cite[Lemma~5.2]{KuVl}. Note that if 
$(K(x)|K,v)$ is an extension such that $v(x-K)$ has no maximal element, then by the proof 
of \cite[Theorem 1]{Ka}, $x$ is limit of a pseudo Cauchy sequence in $(K,v)$ without 
limit in $K$, or equivalently, by \cite[part a) of Lemma~4.1]{KuVl} its approximation 
type over $(K,v)$ is immediate. We use the Taylor expansion
\begin{equation}                             \label{Taylorexp}
f(X) = \sum_{i=0}^{n} \partial_i f(c) (X-c)^i
\end{equation}
where $\partial_i f$ denotes the $i$-th \bfind{Hasse-Schmidt derivative} of $f$.
\begin{lemma}                                \label{Kap}
Take an immediate extension $(K(x)|K,v)$ that is pure in $x$. Let $p$ be the 
characteristic exponent of $Kv$. Then for every nonconstant polynomial $f\in K[X]$ of 
degree smaller than $[K(x):K]$ there is some $\gamma\in v(x-K)$ such that for all
$c\in K$ with $v(x-c)\geq \gamma$ and all $i$ with $1\leq i\leq\deg f$, \sn
the values $v\partial_i f(c)$ are fixed, equal to $v\partial_i f(x)$, and\sn
the values $v\partial_i f(c)+i\cdot v(x-c)$ are pairwise distinct.
\end{lemma}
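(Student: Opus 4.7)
The plan is to combine purity with Kaplansky's pseudo-Cauchy machinery. Since $\deg f<[K(x):K]$ forces $x\notin K$, the immediate extension $(K(x)|K,v)$ has $v(x-K)$ an initial segment of $vK$ without maximum (by \cite[Theorem 2.19]{Ku30}); by the remark preceding the lemma, $x$ is also the pseudo-limit of a pseudo-Cauchy sequence $(c_\nu)$ in $(K,v)$ without limit in $K$. The aim is first to freeze the values of the derivatives $\partial_i f$ as $c$ approaches $x$, then to transfer these frozen values to $x$ itself, and finally to separate the weighted values $v\partial_i f(c)+i\cdot v(x-c)$ by a finiteness argument built on the absence of a maximum in $v(x-K)$.

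For the freezing step, each $\partial_i f$ with $1\le i\le\deg f$ has degree $\deg f-i<[K(x):K]$, so purity yields $\gamma_i\in v(x-K)$ and a constant $v_i$ with $v\partial_i f(c)=v_i$ whenever $v(x-c)\ge\gamma_i$. Since only finitely many $i$ are involved and $v(x-K)$ has no maximum, I select a single $\gamma_0\in v(x-K)$ dominating every $\gamma_i$, so that $v\partial_i f(c)=v_i$ uniformly for all such $i$ and all $c$ with $v(x-c)\ge\gamma_0$. To identify $v_i$ with $v\partial_i f(x)$, I apply Kaplansky's lemma \cite[Lemma~8]{Ka} to $g=\partial_i f$ and the sequence $(c_\nu)$: since $v(x-c_\nu)$ is cofinal in $v(x-K)$, eventually $v(x-c_\nu)\ge\gamma_0$, so $v\partial_i f(c_\nu)=v_i$ is eventually constant, and Kaplansky's lemma then gives $v\partial_i f(x)=v_i$.

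For pairwise distinctness, the equation $v_i+i\cdot v(x-c)=v_j+j\cdot v(x-c)$ for $1\le i<j\le\deg f$ is equivalent to $v(x-c)=\alpha_{i,j}:=(v_i-v_j)/(j-i)\in\widetilde{vK}$, and there are only finitely many such $\alpha_{i,j}$. Using once more that $v(x-K)$ is an initial segment without maximum, I enlarge $\gamma_0$ to some $\gamma\in v(x-K)$ that strictly exceeds every $\alpha_{i,j}$ lying in $v(x-K)$. Any $\alpha_{i,j}$ outside $v(x-K)$ either lies in $\widetilde{vK}\setminus vK$ or sits above the entire initial segment $v(x-K)$, and in both cases $v(x-c)\ne\alpha_{i,j}$ for every $c\in K$ automatically; for the $\alpha_{i,j}$ in $v(x-K)$, the choice of $\gamma$ ensures $v(x-c)\ge\gamma>\alpha_{i,j}$. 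Hence $v(x-c)\ne\alpha_{i,j}$ for every pair $(i,j)$ as soon as $v(x-c)\ge\gamma$, delivering the pairwise distinctness, in the spirit of \cite[Lemma~5.2]{KuVl}. The main obstacle is the transfer $v\partial_i f(c)\to v\partial_i f(x)$, which is exactly where Kaplansky's lemma is needed; the rest is a cofinality argument using that $v(x-K)$ has no maximum.
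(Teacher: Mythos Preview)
Your proof is correct and is precisely the argument the paper has in mind: the paper does not give its own proof but only cites \cite[Lemma~8]{Ka} and \cite[Lemma~5.2]{KuVl}, and your three steps (purity to freeze the values $v\partial_i f(c)$, Kaplansky's lemma to identify the frozen value with $v\partial_i f(x)$, and the finite cofinality argument to avoid the finitely many bad values $\alpha_{i,j}$) are exactly these two ingredients spelled out. One minor remark: in positive characteristic several Hasse--Schmidt derivatives $\partial_i f$ may vanish identically, giving $v_i=\infty$; your distinctness argument then applies only to pairs $(i,j)$ with $v_i,v_j$ both finite, which is the intended reading of the statement and is all that is ever used later in the paper.
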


\mn
%
%
%
\subsection{Artin-Schreier and Kummer extensions}        \label{sectASKext}
\mbox{ }\sn
Every Galois extension of degree $p$ of a field $K$ of characteristic $p>0$ is an
\bfind{Artin-Schreier extension}, that is, generated by an \bfind{Artin-Schreier
generator} $\vartheta$ which is the root of an \bfind{Artin-Schreier polynomial} 
$X^p-X-b$ with $b\in K$. For every $c\in K$, also $\vartheta-c$ is an Artin-Schreier
generator as its minimal polynomial is $X^p-X-b+c^p-c$. Every Galois extension 
of prime degree $n$ of a field $K$ of characteristic different from $n$ which contains 
all $n$-th roots of unity is a \bfind{Kummer
extension}, that is, generated by a \bfind{Kummer generator} $\eta$ which satisfies 
$\eta^n\in K$. For these facts, see \cite[Chapter VI, \S6]{[L]}.


A \bfind{1-unit} in a valued field $(K,v)$ is an element of the form $u=1+b$ with 
$b\in\cM_K\,$; in other words, $u$ is a unit in $\cO_K$ with residue 1. Take a Kummer 
extension $(L|K,v)$ of degree $p$ of fields of characteristic 0 with any Kummer generator 
$\eta$. Assume that $v\eta\in vK$, so that there is $c_1\in K$ such that $vc_1=-v\eta$, 
whence $vc_1\eta =0$. Assume further that $c_1\eta v\in Kv$, so that there is $c_2\in 
K$ such that $c_2v=(c_1\eta v)^{-1}$. Then $vc_2c_1\eta=0$ and $c_2c_1\eta v=1$.
Furthermore, $K(c_2c_1\eta)=K(\eta)$ and $(c_2c_1\eta)^p=
c_2^pc_1^p\eta^p\in K$. Hence $c_2c_1\eta$ is a Kummer generator of $(L|K,v)$
and a $1$-unit. This shows that in particular, if $(L|K,v)$ is an immediate Kummer
extension, it always admits a Kummer generator which is a $1$-unit.

We note that if $\eta$ is a $1$-unit and if $v(\eta-c)>v\eta
=0$, then also $c$ and $c^{-1}$ are 1-units; conversely, if $c$ is a 1-unit, 
then $v(\eta-c)>0$. 

\pars
We will need the following facts.
\begin{lemma}
Take a valued field $(K,v)$, $2\leq n\in\N$, and a primitive 
$n$-th root of unity $\zeta\in K$. Then 
\begin{equation}                       \label{prod1-z}
\prod_{i=1}^{n-1} (1-\zeta^i)\>=\> n \>.
\end{equation}
If in addition $n$ is prime, then
\begin{equation}                       \label{vz-1}
v(1-\zeta)\>=\>\frac{vn}{n-1} \>.  
\end{equation}
\end{lemma}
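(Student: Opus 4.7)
The plan is to prove the two equations in sequence, deducing the second from the first plus a symmetry argument.

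For \eqref{prod1-z}, I would start from the factorization $X^n-1=\prod_{i=0}^{n-1}(X-\zeta^i)$, which holds because $\zeta$ is a primitive $n$-th root of unity, so $1,\zeta,\ldots,\zeta^{n-1}$ are precisely the $n$ distinct roots of $X^n-1$. Dividing by $X-1$ gives
\[
1+X+\cdots+X^{n-1} \>=\> \prod_{i=1}^{n-1}(X-\zeta^i)\>,
\]
and substituting $X=1$ on both sides yields $n=\prod_{i=1}^{n-1}(1-\zeta^i)$.

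For \eqref{vz-1}, first note that since $\zeta^n=1$ and the value group is torsion free, $v\zeta=0$, so every power of $\zeta$ lies in $\cO_K$ and hence $\Z[\zeta]\subseteq\cO_K$. Taking $v$ of both sides of \eqref{prod1-z} gives
\[
vn\>=\>\sum_{i=1}^{n-1} v(1-\zeta^i)\>,
\]
so it will suffice to show that $v(1-\zeta^i)=v(1-\zeta)$ for each $1\leq i\leq n-1$. Since $n$ is prime, $\gcd(i,n)=1$, so there exists $1\leq j\leq n-1$ with $ij\equiv 1\pmod n$, i.e., $\zeta=(\zeta^i)^j$. The two geometric-series identities
\[
\frac{1-\zeta^i}{1-\zeta}\>=\>1+\zeta+\cdots+\zeta^{i-1} \quad\mbox{and}\quad \frac{1-\zeta}{1-\zeta^i}\>=\>1+\zeta^i+\cdots+(\zeta^i)^{j-1}
\]
therefore both exhibit $\frac{1-\zeta^i}{1-\zeta}$ and its inverse as elements of $\Z[\zeta]\subseteq\cO_K$. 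Hence both have nonnegative value, forcing $v\left(\frac{1-\zeta^i}{1-\zeta}\right)=0$, i.e., $v(1-\zeta^i)=v(1-\zeta)$. Plugging back yields $vn=(n-1)v(1-\zeta)$, which rearranges to \eqref{vz-1}.

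The main obstacle is the step establishing $v(1-\zeta^i)=v(1-\zeta)$. A naive Galois-conjugation argument fails because $\zeta\in K$ and so the relevant Galois elements of $\Q(\zeta)/\Q$ need not preserve the restricted valuation. The resolution is the concrete observation that, for $n$ prime, both $\frac{1-\zeta^i}{1-\zeta}$ and its reciprocal admit explicit presentations as sums of $n$-th roots of unity; this makes them units in $\Z[\zeta]$ without any appeal to the specific nature of $v|_\Q$, and works uniformly in the trivial, residue-prime-$\neq n$, and residue-prime-$=n$ cases.
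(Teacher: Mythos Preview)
Your proof is correct and follows essentially the same line as the paper's: both derive \eqref{prod1-z} from the factorization of $X^n-1$ (the paper phrases it as $f'(1)=n$, you as evaluating $(X^n-1)/(X-1)$ at $1$), and both reduce \eqref{vz-1} to showing all $v(1-\zeta^i)$ coincide via the geometric-series identity $1-\zeta^i=(1-\zeta)(1+\zeta+\cdots+\zeta^{i-1})$.

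The one small difference is in how equality of the values is forced. The paper uses only the single inequality $v(1-\zeta^i)\geq v(1-\zeta)$ and then appeals to an extremal choice: pick $\zeta$ among the primitive roots so that $v(1-\zeta)$ is maximal, whence the one-sided inequality becomes an equality for all $i$. You instead use the inequality in both directions, exploiting that for prime $n$ one can write $\zeta=(\zeta^i)^j$ and hence express the reciprocal $(1-\zeta)/(1-\zeta^i)$ as an $\cO_K$-element too. Your version avoids the ``without loss of generality'' step and makes the unit status of $(1-\zeta^i)/(1-\zeta)$ explicit; the paper's version needs only one geometric-series identity. Both are standard and equally short.
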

\begin{proof}
Let $f(X)=X^n-1=\prod_{i=1}^n (X-\zeta^i)$. Then
$f'(X)=nX^{n-1}$ and  $\prod_{i=1}^{n-1} (1-\zeta^i)=f'(1)=n$. This proves
equation~(\ref{prod1-z}). 

If $n$ is prime, then all powers $\zeta^k\ne 1$ are also primitive $n$-th roots of unity. 
We may assume that $\zeta$ is chosen among the primitive $n$-th roots of unity such that
$v(1-\zeta)$ is maximal. Take any $k\in\N$ such that $\zeta^k\ne 1$. Then
\[
1-\zeta^k\>=\> (1-\zeta)(1+\zeta+\ldots+\zeta^{k-1})\>. 
\]
Since $v\zeta=0$, we have $v(1+\zeta+\ldots+\zeta^{k-1})\geq 0$, whence $v(1-\zeta^k)\geq
v(1-\zeta)$. By our choice of $\zeta$, this yields $v(1-\zeta^k)=v(1-\zeta)$. 
Consequently, all factors of the product in (\ref{prod1-z}) have equal value. This proves
equation~(\ref{vz-1}). 
\end{proof}

\mn
%
%
\subsection{Ideals and final segments}           
\mbox{ }\sn
Take a valued field $(L,v)$. The function
\begin{equation}                           \label{vIS}
v:\; I\>\mapsto\> vI\>:=\> \{vb\mid 0\ne b\in I\}
\end{equation}
is an order preserving bijection from the set of all proper ideals of ${\cal O}_L$ onto 
the set of all final segments of $vL^{> 0}$ (including the final segment $\emptyset$). 
The set of these final segments is linearly ordered by
inclusion, and the function (\ref{vIS}) is order preserving: $J\subseteq I$ holds 
if and only if $vJ\subseteq vI$ holds. The inverse of the above function 
is the order preserving function
\begin{equation}                           \label{S->IS}
\Sigma\>\mapsto\> I_\Sigma\>:=\> (a\in L\mid va\in \Sigma)\>=\>
\{a\in L\mid va\in \Sigma\}\cup\{0\}\>.
\end{equation}

The following facts from general valuation theory are well known:
\begin{lemma}
The following statements are equivalent:
\sn
a) $I_\Sigma$ is a prime ideal of $\cO_L\,$,
\sn
b) $\Sigma= vL^{\geq 0}\setminus H$ for some convex subgroup $H$ of $vL$.
\sn
If b) holds, then $I_\Sigma=\cM_{v_H}$ for the 
coarsening $v_H$ of $v$ on $L$ whose value group is $vL/H$.
\end{lemma}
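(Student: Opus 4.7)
The plan is to prove the equivalence of a) and b) by exploiting the correspondence between final segments of $vL^{>0}$ and proper ideals of $\cO_L$ already recorded above, and then to verify the identification $I_\Sigma=\cM_{v_H}$ by direct comparison of the membership conditions.

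For the direction b)$\Rightarrow$a), I would assume $\Sigma = vL^{\geq 0}\setminus H$ for a convex subgroup $H$ of $vL$. Since $H$ is convex and contains $0$, the set $\Sigma$ consists exactly of those $\alpha\in vL^{\geq 0}$ that are strictly greater than every element of $H$. Given $a,b\in\cO_L$ with $ab\in I_\Sigma$, I would write $v(ab)=va+vb\in\Sigma$ and argue by contradiction: if neither $a$ nor $b$ lies in $I_\Sigma$, then $va,vb\in H$, so $va+vb\in H$ because $H$ is a subgroup, contradicting $va+vb>H$.

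For a)$\Rightarrow$b), I would set $H^+:=vL^{\geq 0}\setminus\Sigma$, which contains $0$ and is an initial segment of $vL^{\geq 0}$ because $\Sigma$ is a final segment of $vL^{>0}$. The crux is to show that $H^+$ is closed under addition; once this is done, $H:=H^+\cup(-H^+)$ is a convex subgroup of $vL$ satisfying $\Sigma=vL^{\geq 0}\setminus H$. Given $\alpha,\beta\in H^+$, I would pick $a,b\in\cO_L$ with $va=\alpha$ and $vb=\beta$ (trivial, since $\alpha,\beta\geq 0$) and observe that if $\alpha+\beta\in\Sigma$ then $ab\in I_\Sigma$; primality then forces $a\in I_\Sigma$ or $b\in I_\Sigma$, i.e.\ $\alpha\in\Sigma$ or $\beta\in\Sigma$, contradicting the choice of $\alpha,\beta$.

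Finally, the identification $I_\Sigma=\cM_{v_H}$ follows from the definitions: by convexity of $H$ with $0\in H$, the condition $va\in vL^{\geq 0}\setminus H$ is equivalent to $va>H$, which in turn is equivalent to $v_H(a)>0$ in $vL/H$, matching the defining condition for $\cM_{v_H}$. The main (and really the only) obstacle is the passage from primality of $I_\Sigma$ to additive closure of $H^+$, which is where the multiplicative structure of the ring is translated into the additive structure of the value group via the relation $v(ab)=va+vb$; every other step is bookkeeping with convex subgroups and final segments.
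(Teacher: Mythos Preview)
Your argument is correct and is the standard valuation-theoretic proof. The paper does not give a proof of this lemma at all; it is introduced with the phrase ``The following facts from general valuation theory are well known'' and then simply stated. Your write-up supplies exactly the routine verification one would expect: the translation between primality of $I_\Sigma$ and additive closure of $H^+=vL^{\geq 0}\setminus\Sigma$ via $v(ab)=va+vb$, followed by the observation that a convex additively closed subset of $vL^{\geq 0}$ containing $0$ generates a convex subgroup $H=H^+\cup(-H^+)$. The only place you are slightly terse is in checking that $H$ is closed under addition when the summands have opposite signs, but this follows immediately from $H^+$ being an initial segment of $vL^{\geq 0}$, so no real gap.
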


We note that for each $S\subset vL^{\geq 0}$, we have the following equalities of 
$\cO_L$-ideals:
\begin{equation}                      \label{17}
(a\in L\mid va\in S)\>=\> (a\in L\mid va\in S\fs)\>=\>I_{S^-}\>.
\end{equation}
\begin{lemma}                         \label{lemI^n=I}
Take a nonempty final segment $\Sigma$ of $vL^{\geq 0}$ and $m\in\N_{>0}\,$.
\sn
1) We have
\[
I_\Sigma^m\>=\> (b\in L\mid vb\in m\Sigma) \>=\> (b\in L\mid vb\in (m\Sigma)\fs)\>.
\]

\sn
2) We have $I_\Sigma=I_\Sigma^m$ if and only if $\Sigma=(m\Sigma)\fs$.
\end{lemma}
\begin{proof}
1): We have 
\begin{eqnarray*}                
I_\Sigma^m &=& (a^m\mid a\in I_\Sigma)\>=\> (a^m\mid a\in L \mbox{ and }va\in \Sigma)
\>=\> (b\in L\mid vb\in m\Sigma)\\
&=& (b\in L\mid vb\in (m\Sigma)\fs)\>,
\end{eqnarray*}
where the fourth equality follows from (\ref{17}),
and the third equality is seen as follows. The inclusion $\subseteq$ is obvious.
Assume that $b\in L$ with $vb\in m\Sigma$, i.e., there is $\alpha\in\Sigma$ such
that $m\alpha=vb$. Choose $a\in L$ such that $va=\alpha$. Then $va^m=vb$, hence
$b\in (a^m\mid a\in L \mbox{ and }va\in \Sigma)$. This proves the inclusion $\supseteq$.

\sn
2): This follows immediately from part 1).
\end{proof}

\mn
%
%
\subsection{Ramification jumps and ramification ideals}           \label{secthrg}
\mbox{ }\sn
Take a valued field $(K,v)$. Assume that $L|K$ is a Galois extension, and let
$G=\Gal(L|K)$ denote its Galois group. For proper ideals $I$ of ${\cal O}_L$ we consider 
the (upper series of) \bfind{higher ramification groups}
\begin{equation}
G_I\>:=\>\left\{\sigma\in G\>\left|\;\> \frac{\sigma b -b}{b}\in I
\mbox{ \ for all }b\in L^\times\right.\right\}
\end{equation}
(see \cite[\S12]{ZS2}). Note that $G_{\cM_L}$ is the ramification group of 
$(L|K,v)$. For every ideal $I$ of ${\cal O}_L\,$, $G_I$ is a
normal subgroup of $G$ (\cite[(d) on p.~79]{ZS2}). The function
\begin{equation}                            \label{eq12}
\varphi:\; I\>\mapsto\>G_I
\end{equation}
preserves $\subseteq$, that is, if $I\subseteq J$, then $G_I\subseteq G_J\,$.
As ${\cal O}_L$ is a valuation ring, the set of its ideals is linearly
ordered by inclusion. This shows that also the higher ramification
groups are linearly ordered by inclusion. Note that in general, $\varphi$ will neither 
be injective nor surjective as a function to the set of normal subgroups of~$G$.

Using the function (\ref{S->IS}), the higher ramification groups can be represented as
\[
G_\Sigma\>:=\>G_{I_\Sigma}\>=\>
\left\{\sigma\in G\>\left|\;\> v\,\frac{\sigma b-b}{b}\in \Sigma\cup\{\infty\}
\mbox{ \ for all }b\in L^\times\right.\right\} \>,
\]
where $\Sigma$ runs through all final segments of $vL^{> 0}$.

Like the function (\ref{eq12}), also the function $\Sigma\mapsto G_\Sigma$ is in 
general not injective. We call $\Sigma$ a \bfind{ramification jump} if
\[
\Sigma'\subsetneq\Sigma\>\Rightarrow\> G_{\Sigma'}\subsetneq G_\Sigma
\]
for all final segments $\Sigma'$ of $vL^{>0}$.
If $\Sigma$ is a ramification jump, then $I_\Sigma$ is called a \bfind{ramification
ideal}. It follows from the definition that an ideal $I$ of ${\cal O}_L$ is a 
ramification ideal if and only if for some subgroup $G'$ of $G$, $I$ is the smallest 
ideal such that $G'=G_I$ (cf.\ \cite[\S 21]{En}).

\parm
In this paper we are particularly interested in the case where $\cE=(L|K,v)$ is a Galois
extension of prime degree $p$. Then $G=\Gal(L|K)$ is a cyclic group of order $p$ and 
thus has only one proper subgroup, namely $\{\mbox{\rm id}\}$, and this subgroup is 
equal to $G_\Sigma$ for $\Sigma=\emptyset$. If in this case $G$ itself is the
ramification group of the extension, then there must be a unique ramification jump
$\Sigma_\cE$, and we will call $I_\cE=I_{\Sigma_\cE}$ {\it the} ramification ideal of 
$(L|K,v)$. As we will show in the next section, ramification jump and ramification 
ideal carry important information about the extension $(L|K,v)$.

\mn
%
%
%
\subsection{Galois defect extensions of prime degree} \label{sectGaldefdegp}
\mbox{ }\sn
The following is part of Theorem~3.5 of \cite{KuRz}:
\begin{theorem}                                        \label{dist_galois_p}
Take a Galois defect extension $\cE=(L|K,v)$ of prime degree with Galois group $G$.
Then $G$ is the ramification group of $\cE$.
The set $\Sigma_\sigma$ does not depend on the choice of
the generator $\sigma$ of $G$. Writing $\Sigma_{\cE}$ for $\Sigma_\sigma\,$, we have 
that the final segment $\Sigma_{\cE}$ of $vK^{>0}$ 
%
is the unique ramification jump of the extension $\cE$.
Further, 
%
%
$I_\cE=I_{\Sigma_{\cE}}$ is the unique ramification ideal of the extension $\cE$.
\pars
If $(L|K,v)$ is an Artin-Schreier defect extension with any Artin-Schreier generator 
$\vartheta$, then
\begin{equation}                            \label{SigmaAS}
\Sigma_{\cE} \>=\> -v(\vartheta-K)\>.
\end{equation}
If $K$ contains a primitive root of unity and $(L|K,v)$ is a Kummer extension with 
Kummer generator $\eta$ of value $0$, then
\begin{equation}                            \label{SigmaKum}
\Sigma_{\cE} \>=\> \frac{1}{p-1}vp-v(\eta-K)\>.
\end{equation}
\end{theorem}

\begin{corollary}                            \label{SigmaE-}
Let the assumptions be as in the preceding theorem. Then for every $c\in K$,
\[
v(\vartheta-c)\><\> 0\quad\mbox{ and }\quad v(\eta-c)\><\> \frac{1}{p-1}vp\>. 
\]
\end{corollary}
\begin{proof}
Our assertions follow from (\ref{SigmaAS}), (\ref{SigmaKum}), and the fact that 
$\Sigma_{\cE}\subseteq vK^{>0}$ by Theorem~\ref{dist_galois_p}.
\end{proof}

\begin{proposition}                             \label{ind=I^p=I}
Assume that the extension $\cE=(L|K,v)$ satisfies (\ref{as}).
\sn
1) The following assertions are equivalent:
\sn
a) the extension $\cE$ has independent defect,
\sn
b) $\Sigma_\cE\>=\>(p\Sigma_\cE)\fs\,$,
\sn
c) $I_\cE^p\>=\>I_\cE\,$. 
\sn
d) $(I_\cE\cap K)^p\>=\>I_\cE\cap K$. 

\sn
If $vK$ is $p$-divisible, then these properties are also equivalent to 
$\Sigma_\cE\>=\>p\Sigma_\cE\,$.

\mn
2) If $\cE$ has independent defect, then the corresponding convex subgroup $H_\cE$ of 
$vL$ does not contain $vp$.
\end{proposition}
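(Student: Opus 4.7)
For Part 1), the plan is to prove the three equivalences a) $\Leftrightarrow$ b), b) $\Leftrightarrow$ d), b) $\Leftrightarrow$ c), together with the $p$-divisible addendum, by invoking earlier lemmas. The equivalence a) $\Leftrightarrow$ b) is a direct application of Lemma~\ref{SD} with $\Gamma = vL$, $\Sigma = \Sigma_\cE$, $m = p$: the definition of independent defect (with $H_\cE$ playing the role of $\Delta$) matches condition (a) there, while condition (b) there is exactly our (b); moreover $\Sigma_\cE$ is a nonempty final segment of $vL^{>0}$ by Theorem~\ref{dist_galois_p}. The equivalence b) $\Leftrightarrow$ d) follows from Lemma~\ref{lemI^n=I} part 2) applied with $m = p$ and $\Sigma = \Sigma_\cE$, noting $I_\cE = I_{\Sigma_\cE}$: both conditions are equivalent to the common statement that $\Sigma_\cE = (n\Sigma_\cE)\fs$ for all $n$.

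For b) $\Leftrightarrow$ c), I would pass to the divisible hull $\widetilde{vK}$ and verify that the cut $((p\Sigma_\cE)\fs)^-$ coincides with $p\Sigma_\cE^-$: both have lower part $\{\delta \in \widetilde{vK} \mid \delta < p\Sigma_\cE\}$, which, thanks to divisibility of $\widetilde{vK}$, equals $\{p\gamma \mid \gamma < \Sigma_\cE\}$. Since $\Sigma \mapsto \Sigma^-$ is injective on final segments of $vL$ (any such $\Sigma$ is recovered as $\Sigma\fs \cap vL$ from its cut), this gives the equivalence. For the $p$-divisible addendum, multiplication by $p$ is an order-preserving bijection of $vK$, so $p\Sigma_\cE$ is already a final segment and $(p\Sigma_\cE)\fs = p\Sigma_\cE$, turning condition (b) into $\Sigma_\cE = p\Sigma_\cE$.

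For Part 2), the plan is a contradiction via the coarsening $v_{H_\cE}$. If $\chara K = p$ then $vp = \infty \notin H_\cE$ trivially, so assume $\chara K = 0$ and suppose $vp \in H_\cE$. Then $v_{H_\cE}(p) = 0$, so $\chara K v_{H_\cE} = 0$ and the defect factor $\tilde p^\nu$ in Ostrowski's equation~(\ref{feuniq}) for $(L|K, v_{H_\cE})$ equals $1$. The coarsened extension is still unibranched (any extension of $v_{H_\cE}$ from $K$ to $L$ is coarser than some extension of $v$, which is unique), and $(v_{H_\cE} L : v_{H_\cE} K) = 1$ follows from $vL = vK$, so Ostrowski forces $[L v_{H_\cE} : K v_{H_\cE}] = p$. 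The Galois group $G$ of order $p$ therefore acts faithfully on the residue field $L v_{H_\cE}$, hence the inertia group of $(L|K, v_{H_\cE})$ is trivial. For any generator $\sigma$ of $G$ there is then an element $b \in L$ with $v_{H_\cE}(b) = 0$ and $v_{H_\cE}(\sigma b - b) = 0$, giving $vb, v(\sigma b - b) \in H_\cE$, so $v((\sigma b - b)/b) \in H_\cE$. But by the definition of $\Sigma_\cE$ in Theorem~\ref{dist_galois_p}, this value also lies in $\Sigma_\cE$, contradicting $\Sigma_\cE > H_\cE$.

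The main obstacle lies in Part 2): to carry out the coarsening argument we must verify that $(L|K, v_{H_\cE})$ remains unibranched, invoke Ostrowski to pin the residue degree at exactly $p$, and deduce triviality of the inertia group from the faithfulness of the $G$-action on $Lv_{H_\cE}$. Each ingredient is standard in ramification theory, but the chain must be assembled carefully to extract the contradiction from the hypothesis $vp \in H_\cE$.
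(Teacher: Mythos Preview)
Your treatment of Part 1) matches the paper's proof essentially step by step: the same invocations of Lemma~\ref{SD} and Lemma~\ref{lemI^n=I} for a)$\Leftrightarrow$b)$\Leftrightarrow$d), the same passage to cuts in $\widetilde{vK}$ for b)$\Leftrightarrow$c), and the same one-line observation for the $p$-divisible addendum.

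For Part 2) your argument is correct but genuinely different from the paper's. The paper gives a two-line direct computation: in mixed characteristic $\cE$ admits a Kummer generator $\eta$ of value $0$, so $0=v(\eta-0)\in v(\eta-K)$, whence by equation~(\ref{SigmaKum}) the value $\frac{1}{p-1}vp$ lies in $\Sigma_\cE$; if $vp\in H_\cE$ then by convexity $\frac{1}{p-1}vp\in H_\cE$, contradicting $\Sigma_\cE>H_\cE$. Your coarsening argument is longer and assembles several standard ramification-theoretic ingredients (preservation of unibranchedness under coarsening, Ostrowski in residue characteristic zero, and triviality of the inertia group forced by residue degree $p$), but it has the advantage of never invoking the explicit description~(\ref{SigmaKum}) of $\Sigma_\cE$ and hence of not implicitly relying on $K$ containing a primitive $p$-th root of unity, which that formula requires. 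The paper's approach is shorter and ties the result directly to the structure of $\Sigma_\cE$; yours is more self-contained and purely valuation-theoretic.
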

\begin{proof}
1): By definition, $\cE$ has independent defect if and only if $\Sigma_\cE=vL^{\geq 0}
\setminus H_\cE$ for some strongly convex subgroup $H_\cE$ of $vL$. By Lemma~\ref{SD}, 
this holds if and only if $\Sigma_\cE=(p\Sigma_\cE)\fs\,$. By part 2) of
Lemma~\ref{lemI^n=I}, this in turn is equivalent to $I_\cE^p=I_\cE$ since 
$I_\cE=I_{\Sigma_\cE}\,$. It is also equivalent to $(I_\cE\cap K)^p\>=\>I_\cE\cap K$
because $vL=vK$ and therefore, $I_\cE\cap K=(a\in K\mid va\in \Sigma_\cE)$.

\pars
The last assertion of part 1) holds since if $vK$ is $p$-divisible, then $p\Sigma_\cE$
is again a final segment, whence $(p\Sigma_\cE)\fs=p\Sigma_\cE$.

\sn
2): If $\chara K=p$, then $vp=\infty$ and the assertion is trivial. In the mixed
characteristic case, where $\cE$ is a Kummer extension and admits a Kummer generator 
$\eta$ of value $0$, we know that $0=v(\eta-0)\in v(\eta-K)$, hence also $0\in -v(\eta
-K)$. It follows from (\ref{SigmaKum}) that $\frac{1}{p-1}vp\in \Sigma_\cE\,$. Therefore,
$vp\notin H_\cE$ since otherwise $\frac{1}{p-1}vp\in H_\cE$ by convexity; but this is a
contradiction.
\end{proof}

\bn
%
%
\section{Generation of immediate unibranched extensions of valuation rings}
In this section we will assume that $(L|K,v)$ is an immediate extension, and in various
cases determine generators for the valuation ring $\cO_L$ as an $\cO_K$-algebra.

\mn
%
%
\subsection{The general case}
\mbox{ }\sn
In this section we consider an immediate but not necessarily algebraic extension
$(K(x)|K,v)$. Then the set $v(x-K)$ has no maximal element. For this and 
the definition of ``pure extension'', see Section~\ref{sectimmdeflext}.

For every $\gamma\in vK$ we choose $t_\gamma\in K$ such that $vt_\gamma=-\gamma$. For 
every $c\in \cO_K$ we know that $v(x-c)\in vK$ since the extension is immediate, so we 
may set $t_c:=t_{v(x-c)}$ and
\[
x_c \>:=\> t_c(x-c)\,\in\, \cO_{K(x)}^\times\>. 
\]

We use the Taylor expansion (\ref{Taylorexp}).
\begin{lemma}                       \label{l1}
Assume that the immediate extension $(K(x)|K,v)$ is pure. Then for every $g(x)\in 
\cO_{K(x)}\cap K[x]$ there is $c\in K$ such that $g(x)\in \cO_K[x_c]$. If in addition 
$K(x)|K$ is algebraic, then
\[
\cO_{K(x)}\>=\>\bigcup_{c\in K} \cO_K[x_c]\>. 
\]
\end{lemma}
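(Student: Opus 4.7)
The plan is: given $g \in K[X]$ with $g(x) \in \cO_{K(x)}$, apply the Taylor expansion~(\ref{Taylorexp}) about a suitably chosen $c \in K$ to rewrite
\[
g(x) \;=\; \sum_{i=0}^{\deg g}\partial_i g(c)(x-c)^i \;=\; \sum_{i=0}^{\deg g}\partial_i g(c)\,t_c^{-i}\,x_c^i,
\]
and then verify that each coefficient $\partial_i g(c)\,t_c^{-i}$ lies in $\cO_K$, i.e., $v\partial_i g(c) + iv(x-c) \geq 0$ for every $i$ in the range $0 \leq i \leq \deg g$.

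To pick $c$, I would combine Lemma~\ref{Kap} with the definition of purity. Assuming $g$ is nonconstant (the constant case is trivial since $K \subseteq \cO_K[x_c]$), Lemma~\ref{Kap} provides $\gamma \in v(x-K)$ such that for every $c \in K$ with $v(x-c) \geq \gamma$ the values $v\partial_i g(c) = v\partial_i g(x)$ are fixed and the numbers $v\partial_i g(c) + iv(x-c)$, $1 \leq i \leq \deg g$, are pairwise distinct; moreover, purity applied to $g$ itself makes $vg(c) = w$ independent of the choice of such $c$, after enlarging $\gamma$ if necessary. Since $(K(x)|K,v)$ is immediate with $x \notin K$, $v(x-K)$ has no maximum, so such $c$ exists and in fact $v(x-c)$ can be taken arbitrarily large within $v(x-K)$.

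Using the pairwise distinctness and the ultrametric law applied to a sum with distinct term values,
\[
v\bigl(g(x)-g(c)\bigr) \;=\; m(c) \;:=\; \min_{1 \leq i \leq \deg g}\bigl(v\partial_i g(c) + iv(x-c)\bigr),
\]
and $m(c)$ is strictly increasing in $v(x-c)$ while $w$ is fixed; hence for $v(x-c)$ large enough within $v(x-K)$ one has $m(c) \ne w$, so $vg(x) = \min(w, m(c))$. Since $vg(x) \geq 0$, this forces both $w \geq 0$ and $m(c) \geq 0$, and therefore each Taylor term $v\partial_i g(c) + iv(x-c) \geq 0$. Thus $\partial_i g(c)\,t_c^{-i} \in \cO_K$ for every $i$, giving $g(x) \in \cO_K[x_c]$ and proving the first claim. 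For the second assertion, when $K(x)|K$ is algebraic of degree $d$, every $y \in \cO_{K(x)}$ admits a unique representation $y = g(x)$ with $g \in K[X]$ of degree less than $d$, so part (i) supplies $c \in K$ with $y \in \cO_K[x_c]$; the reverse inclusion $\cO_K[x_c] \subseteq \cO_{K(x)}$ is immediate from $x_c \in \cO_{K(x)}^\times$. The one subtle point I expect is the case analysis that rules out the coincidence $m(c) = w$ for sufficiently large $v(x-c)$, which is exactly what the no-maximum property of $v(x-K)$ supplied by immediacy and purity is there to ensure.
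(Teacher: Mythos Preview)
Your proof is correct and follows essentially the same route as the paper's: Taylor-expand $g(x)$ about a $c\in K$ with $v(x-c)$ large, use Lemma~\ref{Kap} to control the values of the Taylor terms, and conclude from $vg(x)\geq 0$ that every coefficient $\partial_i g(c)\,t_c^{-i}$ lies in $\cO_K$. The only difference is cosmetic: the paper asserts directly that the values $v\partial_i g(c)(x-c)^i$ are pairwise distinct for $0\leq i\leq\deg g$ (treating the $i=0$ case as part of Lemma~\ref{Kap}), whereas you separate off the constant term $g(c)$, use purity to fix its value $w$, and then invoke the strict monotonicity of $m(c)$ in $v(x-c)$ to avoid the coincidence $m(c)=w$ --- which is exactly the small argument needed to extend the stated Lemma~\ref{Kap} from $i\geq 1$ to $i\geq 0$.
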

\begin{proof}
%
From Lemma~\ref{Kap} we infer that whenever for $c\in K$ the value $v(x-c)$ is large 
enough, then the values $v\partial_i g(c) (x-c)^i$, $0\leq i\leq\deg g$, are pairwise
distinct. It follows that  
\[
vg(x)\>=\>\min_i v\partial_i g(c) (x-c)^i\>=\>\min_i v\partial_i g(c)t_c^{-i} x_c^i\>.
\]
Since $g(x)\in\cO_{K(x)}$ and $vx_c=0$, we find that $\partial_i g(c)t_c^{-i}\in\cO_K$ 
for all $i$ and therefore, $g(x)\in \cO_K[x_c]$. 

If $K(x)|K$ is algebraic, then $K(x)=K[x]$ and the last assertion of our lemma follows 
from what we have already proved.
\end{proof}

\begin{lemma}                       \label{l2}
Take $c_1, c_2\in K$. If $v(x-c_1)=v(x-c_2)$, then $\cO_K[x_{c_1}]=\cO_K[x_{c_2}]$.
If $v(x-c_1)<v(x-c_2)$, then $\cO_K[x_{c_1}]\subseteq \cO_K[x_{c_2}]$, and if in addition
$x_{c_1}$ is integral over $K$, then $\cO_K[x_{c_1}]\subsetneq \cO_K[x_{c_2}]$.
\end{lemma}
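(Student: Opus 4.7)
Everything follows from the single identity
\[
x_{c_1} \;=\; \lambda\, x_{c_2} + u, \qquad \lambda := t_{c_1}/t_{c_2},\; u := t_{c_1}(c_2-c_1),
\]
obtained by writing $x-c_1=(x-c_2)+(c_2-c_1)$ and multiplying through by $t_{c_1}$. One reads off $v\lambda = v(x-c_2)-v(x-c_1)$ and $vu = v(c_2-c_1)-v(x-c_1)$, and the ultrametric inequality applied to $c_2-c_1=(x-c_1)-(x-c_2)$ then controls the signs of these valuations in each case.

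When $v(x-c_1)=v(x-c_2)$, the ultrametric gives $v(c_2-c_1)\ge v(x-c_1)$, so $\lambda\in\cO_K^\times$ and $u\in\cO_K$; hence $x_{c_1}\in\cO_K[x_{c_2}]$, and by the symmetry of the situation in $(c_1,c_2)$ also $x_{c_2}\in\cO_K[x_{c_1}]$, giving the claimed equality. When $v(x-c_1)<v(x-c_2)$, strict inequality forces $v(c_2-c_1)=v(x-c_1)$ via the ultrametric, so $u\in\cO_K^\times$ and $\lambda\in\cM_K$; both lie in $\cO_K$, and $x_{c_1}\in\cO_K[x_{c_2}]$ yields $\cO_K[x_{c_1}]\subseteq\cO_K[x_{c_2}]$.

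For the strict inclusion when $x_{c_1}$ is integral over $K$, I argue by contradiction: suppose $x_{c_2}=h(x_{c_1})$ for some $h\in\cO_K[Y]$. Solving the key identity explicitly gives the degree-$1$ expression $x_{c_2}=\lambda^{-1}x_{c_1}-\lambda^{-1}u$ in $K[x_{c_1}]$, with $\lambda^{-1}\notin\cO_K$. Since $(K(x)|K,v)$ is immediate with $v(x-K)$ having no maximum, $x\notin K$, so $n:=[K(x):K]\ge 2$ and $\{1,x_{c_1},\ldots,x_{c_1}^{n-1}\}$ is a $K$-basis of $K(x)$. Dividing $h(Y)$ by the monic minimal polynomial $p$ of $x_{c_1}$ produces a remainder $r(Y)$ with $\deg r<n$ and $r(x_{c_1})=x_{c_2}$, and uniqueness of the basis expansion forces $r(Y)=\lambda^{-1}Y-\lambda^{-1}u$.

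The main obstacle is seeing that $r$ inherits $\cO_K$-coefficients from $h$, which contradicts $\lambda^{-1}\notin\cO_K$. This reduces to the statement $p\in\cO_K[Y]$, because polynomial division by a monic polynomial in $\cO_K[Y]$ preserves integrality of the remainder's coefficients. In the immediate unibranched setting of Section 3 this is automatic: $vx_{c_1}=0$ together with uniqueness of $v$ on $K(x)$ forces all $K$-conjugates of $x_{c_1}$ to share the value $0$, so the symmetric functions forming the coefficients of $p$ all lie in $\cO_K$. With $p\in\cO_K[Y]$ in hand the argument closes.
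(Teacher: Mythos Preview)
Your argument is correct and follows the same line as the paper's: the affine identity $x_{c_1}=\lambda x_{c_2}+u$ handles both inclusions, and strictness is obtained by contrasting the unique $K$-basis expansion of $x_{c_2}$ in $1,x_{c_1},\dots,x_{c_1}^{n-1}$ against the explicit expression $x_{c_2}=\lambda^{-1}x_{c_1}-\lambda^{-1}u$ with $\lambda^{-1}\notin\cO_K$.

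One remark on the last step. You are more careful than the paper here: the paper simply asserts that $x_{c_2}\in\cO_K[x_{c_1}]$ admits a representation of degree $<n$ with $\cO_K$-coefficients, whereas you correctly isolate this as the crux and reduce it to the minimal polynomial $p$ of $x_{c_1}$ lying in $\cO_K[Y]$. Your justification via conjugates is fine, but be aware that unibranchedness is \emph{not} a standing hypothesis of Section~3.1 (which only assumes the extension is immediate); it enters only in Section~3.2 where the lemma is applied. So your phrase ``the immediate unibranched setting of Section~3'' slightly overstates the ambient hypotheses---though the paper's own proof tacitly needs the same fact, and the lemma is only ever invoked for unibranched extensions.
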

\begin{proof}
Assume that $v(x-c_1)=v(x-c_2)$. Then $t_{c_1}=t_{c_2}$ and
\[
v(c_1-c_2)\>\geq\>\min\{v(x-c_1),v(x-c_2)\}\>=\> -vt_{c_1}\>, 
\]
whence $t_{c_1}(c_1-c_2)\in\cO_K\,$. It follows that 
\[
\cO_K[x_{c_1}]\>=\>\cO_K[x_{c_1}+t_{c_1}(c_1-c_2)]\>=\>\cO_K[t_{c_1}(x-c_2)]
\>=\>\cO_K[t_{c_2}(x-c_2)]\>=\>\cO_K[x_{c_2}]\>.
\]
Now assume that $v(x-c_1)<v(x-c_2)$. Then $v(c_1-c_2)=\min\{v(x-c_1),v(x-c_2)\}=
v(x-c_1)=-vt_{c_1}<-vt_{c_2}$, whence $t_{c_1}/t_{c_2}\in \cM_K$ and 
$t_{c_1}(c_1-c_2)\in\cO_K^\times$. This shows that
\begin{equation}                         \label{xc2}  
x_{c_1} \>=\> \frac{t_{c_1}}{t_{c_2}}\cdot t_{c_2}(x-c_2)\,+\, t_{c_1}(c_2-c_1)
\,\in\,\cO_K[x_{c_2}]\>,                        
\end{equation}
whence $\cO_K[x_{c_1}]\subseteq \cO_K[x_{c_2}]$. 
\pars
Suppose that $x_{c_2}\in \cO_K[x_{c_1}]$ and $x_{c_1}$ is integral over $K$. 
Then $x_{c_2}$ can be written in the form
\[
x_{c_2}\>=\>a_0+a_1 x_{c_1}+a_2 x_{c_1}^2+\ldots+a_{p-1} x_{c_1}^{n-1}
\]
with $n=[K(x):K]$ and coefficients in $\cO_K\,$. Since the powers of $x_{c_1}$ 
appearing in this expression form a basis of $K(x)|K$, this representation
of $x_{c_2}$ is unique even if one allows the coefficients to be in $K$.
However, according to (\ref{xc2}) we can write $x_{c_1}=ax_{c_2}+b$,
where $a\in \cM_K$ and $b\in \cO_K^\times$. This yields the unique representation
\[
x_{c_2}\>=\> a^{-1}(x_{c_1}-b)\>,
\]
which does not have its coefficients in $\cO_K\,$. This contradiction shows that 
$\cO_K[x_{c_1}]\ne \cO_K[x_{c_2}]$. 
\end{proof}

From Lemmas~\ref{l1} and~\ref{l2}, we obtain:
\begin{proposition}                       \label{p1}
Assume that the immediate algebraic extension
$(K(x)|K,v)$ is pure. Then the rings $\cO_K[x_c]$, $c\in \cO_K\,$, form a chain under
inclusion whose union is $\cO_{K(x)}\,$. 
\end{proposition}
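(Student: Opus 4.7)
The plan is to deduce the proposition directly from Lemmas~\ref{l1} and~\ref{l2}, which together do essentially all of the real work.

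For the chain property, I would take any two elements $c_1,c_2\in\cO_K$. Since $vK$ is a linearly ordered group, the values $v(x-c_1)$ and $v(x-c_2)$ are comparable, and Lemma~\ref{l2} immediately yields one of the inclusions $\cO_K[x_{c_1}]\subseteq\cO_K[x_{c_2}]$ or $\cO_K[x_{c_2}]\subseteq\cO_K[x_{c_1}]$. Hence $\{\cO_K[x_c]\mid c\in\cO_K\}$ is a nested family of subrings of $\cO_{K(x)}$, which is the chain assertion.

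For the union being all of $\cO_{K(x)}$, one inclusion is immediate: each $x_c$ lies in $\cO_{K(x)}^{\times}$ (we chose $t_c$ to make $vx_c=0$), so $\cO_K[x_c]\subseteq\cO_{K(x)}$ for every $c$. For the reverse inclusion I would take any $y\in\cO_{K(x)}$; because $K(x)|K$ is algebraic we have $K[x]=K(x)$, so $y=g(x)$ for some $g\in K[X]$ of degree less than $[K(x):K]$. Lemma~\ref{l1} then produces a $c$ with $g(x)\in\cO_K[x_c]$, placing $y$ in the union.

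The one point that demands a little attention is that the $c$ furnished by Lemma~\ref{l1} can be taken in $\cO_K$. This is arranged via the chain structure from Lemma~\ref{l2}: since the ring $\cO_K[x_c]$ depends only on $v(x-c)$ and is non-decreasing in it, the $c$ from Lemma~\ref{l1} can be replaced by a suitable element of $\cO_K$ whose $\cO_K[x_{c'}]$ contains $\cO_K[x_c]$. I would expect no serious obstacle beyond this bookkeeping, since the analytic content sits in the Hasse--Schmidt calculus of Lemmas~\ref{Kap} and~\ref{l1} and the explicit change-of-variable computation in Lemma~\ref{l2}; once these are granted, the proposition is essentially a formal consequence.
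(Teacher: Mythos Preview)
Your approach is exactly the paper's: the proposition is stated immediately after Lemmas~\ref{l1} and~\ref{l2} as their formal consequence, with no further argument. The chain property is Lemma~\ref{l2}, the union is Lemma~\ref{l1}, just as you say.

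You are right to flag the discrepancy between $c\in K$ in Lemma~\ref{l1} and $c\in\cO_K$ in the proposition, but your proposed fix---replacing an arbitrary $c\in K$ by some $c'\in\cO_K$ with $\cO_K[x_c]\subseteq\cO_K[x_{c'}]$---does not work when $vx<0$. In that case every $c'\in\cO_K$ has $v(x-c')=vx$, so by Lemma~\ref{l2} all the rings $\cO_K[x_{c'}]$ with $c'\in\cO_K$ collapse to the single ring $\cO_K[x_0]$, which is in general strictly smaller than $\cO_{K(x)}$ (e.g.\ in the Artin--Schreier defect case, where $v\vartheta<0$ by part~2) of Corollary~\ref{SigmaE-} and the chain is genuinely infinite). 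The ``$\cO_K$'' in the statement is evidently a slip for ``$K$'': the paper's own application in Theorem~\ref{O_LforASK} cites the proposition with $c$ running over all of $K$. With $c\in K$ there is nothing to patch, and your argument is complete as written.
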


\mn
%
%
\subsection{The case of immediate Artin-Schreier and Kummer extensions}\label{iASKext}
\mbox{ }\sn
An extension of prime degree is a defect extension if and only 
if it is immediate and unibranched. 

\begin{theorem}                             \label{O_LforASK}
1) Assume that $(L|K,v)$ is an Artin-Schreier defect extension with 
Artin-Schreier generator $\vartheta$. 
The rings $\cO_K[\vartheta_c]$, where $\vartheta_c=t_{v(\vartheta-c)}(\vartheta-c)$ and 
$c$ runs through all elements of $K$, form a chain under inclusion whose union is 
$\cO_L\,$. 
The same holds for the rings $\cO_K[t_{v\vartheta} \vartheta]$ when $\vartheta$ runs 
through all Artin-Schreier generators of the extension. 

For $c_1,c_2\in K$, 
\begin{equation}                         \label{thchain}
\cO_K[\vartheta_{c_1}]\>\subsetneq\> \cO_K[\vartheta_{c_2}]
\>\Leftrightarrow\> v(\vartheta-c_1)< v(\vartheta-c_2)\>.
\end{equation}

\sn
2) Assume that $(L|K,v)$ is a Kummer defect extension of degree $p$ and that $\eta$  
is a Kummer generator which is a $1$-unit. 
The rings $\cO_K[\eta_c]$, where $\eta_c=t_{v(\eta-c)}(\eta-c)$
and $c$ runs through all 1-units of $K$, form a chain under inclusion whose union is 
$\cO_L\,$. If $c_1,c_2\in K$ are 1-units, then
\begin{equation}                            \label{etachain}
\cO_K[\eta_{c_1}] \>\subsetneq\> \cO_K[\eta_{c_2}]
\>\Leftrightarrow\> v(\eta-c_1)<v(\eta-c_2)\>.
\end{equation} 
\end{theorem}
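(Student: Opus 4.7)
The approach is to reduce both parts to a single application of the general machinery of the previous subsection (Lemmas~\ref{l1} and~\ref{l2}, bundled in Proposition~\ref{p1}), after verifying its hypotheses. Since $(L|K,v)$ is a Galois defect extension of prime degree~$p$, it is unibranched by convention, and the Lemma of Ostrowski forces it to be immediate with $[L:K]=p$. Lemma~\ref{lp} then tells us that $(L|K,v)$ is pure in any Artin-Schreier generator $\vartheta$ (part~1) and in any $1$-unit Kummer generator $\eta$ (part~2), so the Taylor-expansion estimates of Lemma~\ref{Kap} are available and all of the $\cO_K[x_c]$ formalism applies.

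For part~1, I would apply Lemma~\ref{l1} with $x=\vartheta$: because $L=K[\vartheta]$, every element of $\cO_L$ is a polynomial in $\vartheta$ over $K$, and the lemma places it in some $\cO_K[\vartheta_c]$ with $c\in K$. Lemma~\ref{l2} then yields the chain structure (the values $v(\vartheta-c)$ are totally ordered in $vK$) and the equivalence~(\ref{thchain}); the strict inclusion part uses that $\vartheta_c$ is integral over $K$, which is automatic because $\vartheta$ is. To pass to the second family, indexed by Artin-Schreier generators, I would invoke the standard description of such generators: any Artin-Schreier generator $\vartheta'$ of $L|K$ has the form $\vartheta'=i\vartheta+c$ with $i\in\F_p^\times$ and $c\in K$ (if $\sigma(\vartheta)=\vartheta+1$ and $\sigma(\vartheta')=\vartheta'+i$, then $\vartheta'-i\vartheta$ is $\sigma$-fixed). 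Setting $c'=-c/i$ gives $\vartheta'=i(\vartheta-c')$, hence $t_{v\vartheta'}\vartheta'=i\vartheta_{c'}$, and since $i\in\F_p^\times\subseteq\cO_K^\times$ the ring $\cO_K[t_{v\vartheta'}\vartheta']$ equals $\cO_K[\vartheta_{c'}]$; conversely, $\vartheta-c$ is itself an Artin-Schreier generator for every $c\in K$, so the two families coincide.

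For part~2, I would again apply Lemmas~\ref{l1} and~\ref{l2} with $x=\eta$ to obtain $\cO_L=\bigcup_{c\in K}\cO_K[\eta_c]$ together with the chain structure. The new point is to restrict the index set from all $c\in K$ to $1$-units. Given $c\in K$, there are two cases: if $v(\eta-c)>0$, then $vc=v\eta=0$ and the residue of $c$ equals $\eta v=1$, so $c$ is itself a $1$-unit. If $v(\eta-c)\leq 0$, then, since $\eta$ is a $1$-unit, $v(\eta-1)>0\geq v(\eta-c)$, and Lemma~\ref{l2} gives $\cO_K[\eta_c]\subseteq\cO_K[\eta_1]$, so $\cO_K[\eta_c]$ is already absorbed by the ring associated to the $1$-unit $c'=1$. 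In either case every member of the full family is contained in one indexed by a $1$-unit, so the restricted union still equals $\cO_L$, and (\ref{etachain}) is read off directly from Lemma~\ref{l2}.

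The only real obstacle is this last bookkeeping in each case: translating between the abstract parameter $c\in K$ of the general lemma and the natural index sets of the theorem (Artin-Schreier generators in part~1, $1$-units in part~2). Once the structural description of Artin-Schreier generators and the elementary residue computation for $\eta-c$ are in place, both reductions are short.
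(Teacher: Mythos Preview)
Your proposal is correct and follows essentially the same route as the paper: purity via Lemma~\ref{lp}, then Proposition~\ref{p1} (i.e., Lemmas~\ref{l1} and~\ref{l2}) for the chain structure and union, followed by the standard parametrization $\vartheta'=i\vartheta+c$ of Artin-Schreier generators to identify the two families in part~1). Your treatment of part~2) is in fact more explicit than the paper's---which simply says the argument is the same as in part~1)---since you spell out why restricting $c$ to $1$-units loses nothing in the union.
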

\begin{proof}
From Lemma~\ref{lp} and Proposition~\ref{p1} we know that $\cO_L$ is the union of
the chain $(\cO_K[\vartheta_c])_{c\in K}$ in the Artin-Schreier extension case, and of 
the chain $(\cO_K[\eta_c])_{c\in K}$ in the Kummer extension case. Further, 
(\ref{thchain}) and (\ref{etachain}) follow from Lemma~\ref{l2}.

It remains to prove the additional assertion for the Artin-Schreier extension case.
By \cite[Lemma 2.26]{Ku30}, $\vartheta'\in L$ is another Artin-Schreier generator of 
$L|K$ if and only if  $\vartheta' =i\vartheta -c$ for some $i\in\F_p^\times$ and 
$c\in K$. In particular, $\vartheta-c$ is an Artin-Schreier generator for all $c\in K$.
Moreover, if $\vartheta' =i\vartheta -c$, then $v\vartheta'=v(i\vartheta -c)=
v(\vartheta-i^{-1}c)$ and therefore, $t_{v\vartheta'}=t_{i^{-1}c}$ and 
\[
\cO_K[t_{v\vartheta'} \vartheta'] \>=\> \cO_K[t_{v\vartheta'} i^{-1}\vartheta'] \>=\>
\cO_K[t_{v(\vartheta-i^{-1}c)} (\vartheta-i^{-1}c)]\>=\>\cO_K[\vartheta_{i^{-1}c}]\>.
\]
This shows that the ring $\cO_K[t_{v\vartheta'} \vartheta']$ is already a member in the 
chain constructed above and the union is $\cO_L\,$.
\end{proof}

\parb
Let us consider the {\bf Artin-Schreier case} a bit further, keeping the assumptions of 
part 1) of Theorem~\ref{O_LforASK}. We set $t_c:=t_{v(\vartheta-c)}$. Take any elements
$c_1,c_2\in K$. It follows from (\ref{xc2}) that 
\begin{equation}                         \label{thetac1c2}
\vartheta_{c_1}\>=\>\frac{t_{c_1}}{t_{c_2}}\vartheta_{c_2}+t_{c_1}(c_2-c_1)\>.
\end{equation}

\pars
If $v(\vartheta-c_1)\leq v(\vartheta-c_2)$, then
%
%
$t_{c_1}/t_{c_2}\in\cO_K$ and $t_{c_1}(c_2-c_1)\in\cO_K\,$, as shown in the proof of 
Lemma~\ref{l2}.

\parm
Now we turn to the {\bf Kummer case}, keeping the assumptions of part 2) of
Theorem~\ref{O_LforASK}. We choose a primitive $p$-th root $\zeta_p$ of unity and set
\[
\chi\>:=\> (\zeta_p - 1)^{-1}\>. 
\]
According to equation~(\ref{vz-1}), the element $\chi$ satisfies:
\begin{equation}                         \label{vchi} 
v\chi\>=\>-\frac{1}{p-1}vp\>.
\end{equation}
Hence by Corollary~\ref{SigmaE-},
\[
v\chi(\eta-c)\><\>0 
\]
for all $c\in K$. In the following we will use the abbreviations 
\[
T_c\>:=\> t_{v\chi(\eta-c)} \quad \mbox{ and }\quad \eta_{<c>}\>:=\>T_c\chi(\eta-c) 
\]
in order to simplify our formulas; we observe that  
\begin{equation}                           \label{vT_c}
vT_c\>=\> \frac{1}{p-1}vp - v(\eta-c)\> >\>0\>.
\end{equation}
\begin{corollary}                             \label{CorKumIm}
Assume that $(L|K,v)$ is a Kummer defect extension of degree $p$. Then for every 
Kummer generator $\eta$ which is a $1$-unit, the rings $\cO_K[\eta_{<c>}]$,
where $c$ runs through all 1-units of $K$, form a 
chain under inclusion whose union is~$\cO_L\,$. 

We have that
\begin{equation}                        \label{eqrings'}
\cO_K[\eta_{<c>}] \>=\> \cO_K[\eta_c]\>.  
\end{equation}
Hence if $c_1$ and $c_2$ are 1-units, then
\begin{equation}                            \label{etachain'}
\cO_K[\eta_{<c_1>}] \>\subsetneq\> \cO_K[\eta_{<c_2>}]
\>\Leftrightarrow\> v(\eta-c_1)<v(\eta-c_2)\>.
\end{equation} 
\end{corollary}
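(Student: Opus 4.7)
The plan is to reduce everything to part 2) of Theorem~\ref{O_LforASK} by establishing the key identity \eqref{eqrings'}, namely $\cO_K[\eta_{<c>}]=\cO_K[\eta_c]$ for every $1$-unit $c\in K$. Once this equality is available, the chain property, the description of the union as $\cO_L$, and the comparison criterion \eqref{etachain'} all follow automatically from the corresponding statements about the rings $\cO_K[\eta_c]$ already proved in Theorem~\ref{O_LforASK}.

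To prove \eqref{eqrings'}, I would write
\[
\eta_{<c>}\>=\>T_c\chi(\eta-c)\>=\>u\cdot t_{v(\eta-c)}(\eta-c)\>=\>u\eta_c\>,
\]
with $u:=T_c\chi\,t_{v(\eta-c)}^{-1}$. The point is to show $u\in\cO_K^\times$. Since $L|K$ is a Kummer extension of degree $p$, the field $K$ contains a primitive $p$-th root of unity by definition, hence $\chi=(\zeta_p-1)^{-1}\in K$; the elements $T_c$ and $t_{v(\eta-c)}$ lie in $K$ by construction, so $u\in K$. Using \eqref{vchi} and \eqref{vT_c}, a direct computation gives
\[
vu\>=\>vT_c+v\chi-v t_{v(\eta-c)}\>=\>\Bigl(\tfrac{1}{p-1}vp-v(\eta-c)\Bigr)-\tfrac{1}{p-1}vp+v(\eta-c)\>=\>0\>,
\]
so $u\in\cO_K^\times$. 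Consequently $\cO_K[\eta_{<c>}]=\cO_K[u\eta_c]=\cO_K[\eta_c]$, proving \eqref{eqrings'}.

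With \eqref{eqrings'} in hand, the rest is immediate. Part 2) of Theorem~\ref{O_LforASK} tells us that the family $\{\cO_K[\eta_c]\mid c\text{ a $1$-unit of }K\}$ is totally ordered by inclusion with union $\cO_L$, and that $\cO_K[\eta_{c_1}]\subsetneq\cO_K[\eta_{c_2}]$ iff $v(\eta-c_1)<v(\eta-c_2)$. Replacing each $\cO_K[\eta_c]$ by the equal ring $\cO_K[\eta_{<c>}]$ transfers all three conclusions to the family in the corollary, yielding the chain property, the equality of the union with $\cO_L$, and the equivalence \eqref{etachain'}.

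There is no real obstacle here; the only subtle point is checking that $u$ actually lies in $\cO_K^\times$, and this hinges on remembering that $\chi\in K$ in the Kummer setting and that the valuations of $T_c$ and $\chi$ are tailored precisely so that the two contributions $\pm\tfrac{1}{p-1}vp$ and $\mp v(\eta-c)$ cancel.
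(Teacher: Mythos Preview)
Your proof is correct and follows essentially the same approach as the paper: both arguments reduce to showing \eqref{eqrings'} by observing that $\eta_{<c>}$ and $\eta_c$ differ by the factor $T_c\chi/t_c$, which is a unit in $\cO_K$ because its value is zero, and then invoking part~2) of Theorem~\ref{O_LforASK}. The only minor remark is that the paper's definition of ``Kummer extension'' in Section~\ref{sectASKext} does not literally require $\zeta_p\in K$; however, this holds automatically here since the extension is Galois (forcing $\zeta_p\in L$ and hence $\zeta_p\in K$ by a degree argument), and the paper's own proof makes the same implicit use of $\chi\in K$.
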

\begin{proof}
It suffices to prove (\ref{eqrings'}). We compute:
\[
v\frac{T_c}{\chi^{-1}t_c}\>=\>
v\frac{t_{v\chi(\eta-c)}}{t_{v\chi}t_{v(\eta-c)}}\>=\>0\>. 
\]
Hence,
\[
\cO_K[\eta_{<c>}]\>=\>\cO_K\left[\frac{T_c}{\chi^{-1}t_c}\,\chi^{-1}t_c\,\chi(\eta-c)
\right]\>=\> \cO_K[t_c(\eta -c)] \>=\> \cO_K[\eta_c]\>.
\]
Now the assertion of our corollary follows from part 2) of Theorem~\ref{O_LforASK}.
\end{proof}

Take any elements $c_1,c_2\in K$. Similarly as in the Artin-Schreier case, one derives 
the equation
\begin{equation}                            \label{etac1c2}
\eta_{<c_1>} \>=\>\frac{T_{c_1}}{T_{c_2}}\eta_{<c_2>}+T_{c_1}\chi(c_2-c_1)\>.
\end{equation}

\pars
Assume that $v(\eta-c_1)\leq v(\eta-c_2)$. Then $vT_{c_1}\geq vT_{c_2}$ and
\[
v(c_2-c_1)\>\geq\>\min\{v(\eta-c_1),v(\eta-c_2)\}\>=\>v(\eta-c_1) 
\>=\> -vT_{c_1}\chi\>,
\]
whence $T_{c_1}/T_{c_2}\in\cO_K$ and $T_{c_1}\chi(c_2-c_1)\in\cO_K\,$.

\mn
%
%
\subsection{Values of derivatives of minimal polynomials}
\mbox{ }\sn
In the case of Artin-Schreier extensions and Kummer extensions $(K(x)|K,v)$ of prime
degree we have sufficient information about 
the minimal polynomials $f$ of the various generators $x$ we have worked with in the
previous sections, or equivalently, about their conjugates, to work out the values 
$vf'(x)$. In order to do this, we can either compute $f'$ explicitly, or we can use the 
formula
\begin{equation}                              \label{vf'}
f'(x)\>=\>\prod_{\sigma\in G\setminus\{\mbox{\tiny\rm id}\}} (x-\sigma x)\>,
\end{equation}
where $G$ is the Galois group of $K(x)|K$.

We keep the notations from the previous sections.

\mn
%
%
\subsubsection{Artin-Schreier extensions}
\mbox{ }\sn
Take an Artin-Schreier polynomial $f$ with $\vartheta$ as its root. Then $f(X)=X^p-X-
\vartheta^p+\vartheta$ and $f'(X)=-1$, whence 
\begin{equation}                 \label{vf'AS}
f'(\vartheta)=-1\>.
\end{equation} 
This is also obtained from (\ref{vf'}) since $\{\vartheta-\sigma\vartheta\mid\sigma\in
G\setminus\{\mbox{id}\}\}=\F_p^\times$ and the product of all elements of $\F_p^\times$
is $-1$.

\pars
For $t\in K^\times$, denote by $f_t$ the minimal polynomial of $t\vartheta$. Then
\begin{equation}                                 \label{vg'}
f_t'(t\vartheta)\>=\>\prod_{\sigma\in G\setminus\{\mbox{\tiny\rm id}\}} 
(t\vartheta-\sigma t\vartheta)\>=\>t^{p-1}f'(\vartheta)\>=\>-t^{p-1}\>. 
\end{equation}

\begin{lemma}                             \label{vderimmAS}
Take an Artin-Schreier defect extension $\cE=(L|K,v)$ of prime degree 
$p$ with Artin-Schreier generator $\vartheta$. Denote by $g_c$ the minimal polynomial of 
$\vartheta_c\,$. Then the $\cO_L$-ideal $(g_c'(\vartheta_c)\mid c\in K)$ is equal to
\[
I^{p-1} 
\]
where
\begin{equation}                    \label{IEforASdef}
I\>=\>(t_c\mid c\in K)\>=\>(a\in L\mid va\in -v(\vartheta-K))\>=\>I_\cE
\end{equation}
is the ramification ideal of the extension $(L|K,v)$.
\end{lemma}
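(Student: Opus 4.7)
The plan is to compute $g_c'(\vartheta_c)$ explicitly in terms of $t_c$, and then identify the resulting ideal with $I_\cE^{p-1}$ via the description of the ramification ideal in terms of $v(\vartheta-K)$.

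For the first step, I would use formula (\ref{vf'}), writing
\[
g_c'(\vartheta_c)\>=\>\prod_{\sigma\in G\setminus\{\mathrm{id}\}}(\vartheta_c-\sigma\vartheta_c).
\]
Since $t_c,c\in K$ are fixed by every $\sigma\in G$, each factor simplifies to $\vartheta_c-\sigma\vartheta_c=t_c(\vartheta-\sigma\vartheta)$, so pulling $t_c$ out of the product and invoking (\ref{vf'AS}) gives $g_c'(\vartheta_c)=t_c^{p-1}f'(\vartheta)=-t_c^{p-1}$. Consequently, $(g_c'(\vartheta_c)\mid c\in K)=(t_c^{p-1}\mid c\in K)$.

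Next, I would verify the two remaining equalities in (\ref{IEforASdef}). Since $v(t_c)=-v(\vartheta-c)$, the set of generator-values $\{v(t_c)\mid c\in K\}$ equals $-v(\vartheta-K)$, and any $a\in L^\times$ with $va\in -v(\vartheta-K)$ is a unit multiple of some $t_c$; this gives $(t_c\mid c\in K)=(a\in L\mid va\in -v(\vartheta-K))$. The second equality $(a\in L\mid va\in -v(\vartheta-K))=I_\cE$ is immediate from (\ref{SigmaAS}) in Theorem~\ref{dist_galois_p} together with the definition of $I_\cE$ as $\{a\mid va\in\Sigma_\cE\}$.

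It remains to prove $(t_c^{p-1}\mid c\in K)=I_\cE^{p-1}$. The inclusion $\subseteq$ is clear. For $\supseteq$, I apply part 1) of Lemma~\ref{lemI^n=I}: any nonzero $b\in I_\cE^{p-1}$ satisfies $vb\geq (p-1)\alpha$ for some $\alpha\in\Sigma_\cE$; writing $\alpha=v(t_c)$, which is possible since $\Sigma_\cE=\{v(t_c)\mid c\in K\}$, gives $vb\geq v(t_c^{p-1})$, so $b/t_c^{p-1}\in\cO_L$ and $b\in(t_c^{p-1})$. The main obstacle is precisely this last step: naively, $I_\cE^{p-1}$ could be generated by mixed products $t_{c_1}\cdots t_{c_{p-1}}$ for which no single $t_c^{p-1}$ divides them, but the combination of the fact that $\Sigma_\cE$ is already a final segment exhausted by the values $v(t_c)$, together with Lemma~\ref{lemI^n=I} allowing the passage to the upward closure, lets one replace each such mixed product by a pure $(p-1)$st power.
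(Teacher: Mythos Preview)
Your proposal is correct and follows essentially the same approach as the paper's proof. The only difference is cosmetic: the paper derives $g_c'(\vartheta_c)=-t_c^{p-1}$ by invoking the already-stated formula~(\ref{vg'}) for $f_c'(c\vartheta)$ applied to the Artin-Schreier generator $\vartheta-c$, while you redo that computation directly from~(\ref{vf'}); and the paper passes from $g_c'(\vartheta_c)=-t_c^{p-1}$ to $(g_c'(\vartheta_c)\mid c\in K)=(t_c\mid c\in K)^{p-1}$ in one line, whereas you spell out via Lemma~\ref{lemI^n=I} why $(t_c^{p-1}\mid c\in K)=I_\cE^{p-1}$, which is arguably a useful clarification.
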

\begin{proof}
Applying (\ref{vg'}) to $\vartheta_c=t_c(\vartheta-c)$ and keeping in mind that 
$\vartheta-c$ is an Artin-Schreier generator, we obtain:
\begin{equation}                       \label{ASDLDer}
g'_c(\vartheta_c)\>=\>-t_c^{p-1} \>. 
\end{equation}
This shows that $(g_c'(\vartheta_c)\mid c\in K)=(t_c\mid c\in K)^{p-1}=I^{p-1}\,$.

The second equality in (\ref{IEforASdef}) holds since $vt_c=-v(\vartheta-c)$ by 
definition. The third equality follows from equation (\ref{SigmaAS}) of
Theorem~\ref{dist_galois_p}. 
\end{proof}

\mn
%
%
\subsubsection{Kummer extensions}
\mbox{ }\sn
Take $f(X)=X^p-\eta^p$. Then $f'(X)=pX^{p-1}$, whence 
\begin{equation}                    \label{vf'eta}
f'(\eta)\>=\>p\eta^{p-1}\>.
\end{equation}
This is also obtained from equations (\ref{vf'}) and (\ref{prod1-z}) using that 
$\{\sigma\eta\mid \sigma\in G\}=\{\zeta_p^i\eta\mid i\in\{0,\ldots,p-1\}\}$. 

\begin{lemma}                             \label{vderimmK}
Take a Kummer defect extension $\cE=(L|K,v)$ of prime degree $p$ with Kummer 
generator $\eta$ which is a $1$-unit. Denote by $f_c$ the minimal polynomial of 
$\eta_{<c>}$ over 
$K$. Then the $\cO_L$-ideal $(f'_c(\eta_{<c>})\mid c\in K\mbox{ a 1-unit})$ is
\[
I^{p-1} 
\]
where
\begin{equation}                    \label{IEforKumdef}
I\>=\>(T_c\mid c\in K)\>=\>\left(a\in L\mid va\in \frac{1}{p-1}vp-v(\eta-K)\right)
\>=\>I_\cE
\end{equation}
is the ramification ideal of the extension $(L|K,v)$. 
\end{lemma}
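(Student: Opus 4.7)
The strategy is to mirror the Artin--Schreier computation of Lemma \ref{vderimmAS}, replacing the scalar identity $\prod_{i\in\F_p^\times}i=-1$ used there by the Kummer identity $\prod_{i=1}^{p-1}(1-\zeta_p^i)=p$ of equation (\ref{prod1-z}). The starting point is the product formula (\ref{vf'}): since $G=\Gal(L|K)$ is cyclic of order $p$ acting on $\eta$ by $\sigma\mapsto\zeta_p^{i_\sigma}\eta$ (with $\sigma\mapsto i_\sigma$ a bijection $G\setminus\{\mathrm{id}\}\to\{1,\ldots,p-1\}$), and since $T_c,\chi,c\in K$ are fixed by $G$, I would compute
\[
\eta_{<c>}-\sigma\eta_{<c>}\>=\>T_c\chi(\eta-\sigma\eta)\>=\>T_c\chi\eta(1-\zeta_p^{i_\sigma}),
\]
and then take the product over $\sigma\ne\mathrm{id}$ to obtain
\[
f_c'(\eta_{<c>})\>=\>T_c^{p-1}\chi^{p-1}\eta^{p-1}\prod_{i=1}^{p-1}(1-\zeta_p^i)\>=\>T_c^{p-1}\cdot p\chi^{p-1}\eta^{p-1}.
\]
By (\ref{vchi}), $v(p\chi^{p-1})=vp+(p-1)(-\tfrac{1}{p-1}vp)=0$, and since $\eta$ is a $1$-unit, $v\eta^{p-1}=0$. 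Hence $U:=p\chi^{p-1}\eta^{p-1}\in\cO_L^{\times}$, and so $f_c'(\eta_{<c>})=U\cdot T_c^{p-1}$. Therefore
\[
\bigl(f_c'(\eta_{<c>})\mid c\in K\text{ a }1\text{-unit}\bigr)\>=\>(T_c\mid c\in K\text{ a }1\text{-unit})^{p-1}.
\]

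The next task is to show that this ideal equals $I^{p-1}$ for $I$ as described in (\ref{IEforKumdef}). For the equality $(T_c\mid c\in K\text{ a }1\text{-unit})=(T_c\mid c\in K)$, observe that if $c\in K$ is not a $1$-unit then $v(\eta-c)\leq 0$, so $vT_c\geq\tfrac{1}{p-1}vp>vT_1$ (using $v(\eta-1)>0$ since $\eta$ is a $1$-unit); hence $T_c\in(T_1)$ already lies in the $1$-unit ideal, while the reverse inclusion is trivial. For the identification $(T_c\mid c\in K)=\{a\in L\mid va\in\tfrac{1}{p-1}vp-v(\eta-K)\}$, combine $vT_c=\tfrac{1}{p-1}vp-v(\eta-c)$ with the standard fact that an ideal of the valuation ring $\cO_L$ generated by a family coincides with the set of elements whose value lies in the final segment generated by the values of the generators; the immediacy of $(L|K,v)$ guarantees that $v(\eta-K)$ is an initial segment of $vK$, so $\tfrac{1}{p-1}vp-v(\eta-K)$ is automatically a final segment and no further closure is needed. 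Finally, this set equals $I_{\Sigma_\cE}=I_\cE$ by (\ref{SigmaKum}) and the definition of $I_\cE$, and Theorem \ref{dist_galois_p} certifies that $I_\cE$ is the ramification ideal of $\cE$.

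The argument is entirely elementary and there is no deep obstacle; the content lies in the bookkeeping. The one place where care is required is the cancellation $v(p\chi^{p-1})=0$ provided by (\ref{vchi}), which is exactly what converts the \emph{a priori} non-unit factors $p$ and $\chi^{p-1}$ into a unit when multiplied together. This is what allows the Kummer computation to collapse to the same simple form $T_c^{p-1}\cdot(\text{unit})$ as in the Artin--Schreier case of Lemma \ref{vderimmAS}, making the subsequent identification of ideals a clean parallel to the earlier proof.
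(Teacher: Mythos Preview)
Your proof is correct and follows essentially the same route as the paper: both apply the product formula (\ref{vf'}), use $\sigma\eta=\zeta_p^{i_\sigma}\eta$ and the identity (\ref{prod1-z}) to obtain $f_c'(\eta_{<c>})=p(T_c\chi\eta)^{p-1}$, then invoke (\ref{vchi}) and $v\eta=0$ to reduce to $vf_c'(\eta_{<c>})=(p-1)vT_c$, and finally identify the resulting ideal with $I_\cE^{p-1}$ via (\ref{vT_c}) and (\ref{SigmaKum}). Your explicit reconciliation of $(T_c\mid c\text{ a }1\text{-unit})$ with $(T_c\mid c\in K)$ is a point the paper passes over silently, so your version is in fact slightly more complete.
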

\begin{proof}
We use (\ref{vf'}) to compute:
\[
f'_c(\eta_{<c>}) \>=\> \prod_{\sigma\in G} (\eta_{<c>}-\sigma\eta_{<c>})
\>=\>\prod_{\sigma\in G} T_c \chi(\eta -\sigma\eta ) \>=\> 
(T_c\chi\eta)^{p-1}\prod_{i=1}^{p-1} (1-\zeta_p^i)\>,
\]
whence by equations (\ref{prod1-z}) and (\ref{vchi}), and the fact that $v\eta =0$,
\begin{equation}                          \label{f'_c}
f'_c(\eta_{<c>}) \>=\> p(T_c\chi\eta)^{p-1}\>=\> T_c^{p-1} u\>
\end{equation}
with $u$ a unit in $\cO_L\,$.
This shows that 
\[
(f'_c(\eta_{<c>})\mid c\in K\mbox{ a 1-unit})=(T_c\mid c\in K)^{p-1}=I^{p-1}\>.
\]

The second equality in (\ref{IEforKumdef}) follows from (\ref{vT_c}). The
assertion that $I= I_{\cE}$ in the Kummer case follows from equation~(\ref{SigmaKum})
of Theorem~\ref{dist_galois_p}. 
\end{proof}

\bn
%
%
\section{Kähler differentials and their annihilators for algebraic field extensions}                                 
\label{sectKd}
%
%
%
\subsection{Proof of Theorem~\ref{LimProp}}
\label{sect4.1}
\mbox{ }\sn
Let $L|K$ be an algebraic field extension. Let $A\subseteq K$ be a 
normal domain whose quotient field is $K$. Assume that $z\in L$ is integral over $A$ 
and let $f(X)$ be the minimal polynomial of $z$ over $K$. Then $f(X)\in A[X]$ 
(see \cite[Theorem 4, page 260]{ZSI}). Since $f(X)$ is monic, $(f(X)K[X])\cap R[X]=
f(X)R[X]$, so $A[z]\cong A[X]/(f(X))$. Thus, 
\begin{equation}                                  \label{Kahlercomp}
\Omega_{A[z]|A}\>\cong\> [A[X]/(f(X),f'(X))]dX \>\cong\> [A[z]/(f'(z))]dX
\end{equation}
by \cite[Example 26.J, page 189]{Mat} and \cite[Theorem 58, page 187]{Mat}. There is a
canonical derivation $d_{A[z]|A}:A[z]\rightarrow \Omega_{A[z]|A}$ defined by $g(z)\mapsto 
[g'(z)]dX$ for $g(X)\in A[X]$, where $[g'(z)]$ is the class of $g'(z)$ in $A[z]/(f'(z))$.

\parm
We will now proceed under the assumptions of Theorem~\ref{LimProp}.
Before proving the theorem, we derive a formula that we will need in its proof.
Let $n=[L:K]$. Then for all $\alpha\in S$, $n=\deg h_{\alpha}(X)$ where 
$h_{\alpha}(X)$ is the minimal polynomial of $b_{\alpha}$ over $K$. Suppose 
that $\alpha<\beta$. Then
\[
\left(\frac{a_{\beta}}{a_{\alpha}}\right)^nh_{\alpha}\left(\frac{a_{\alpha}}{a_{\beta}}
X+c_{\alpha,\beta}\right)
\]
is a monic polynomial of degree $n$ in $K[X]$ which has $b_{\beta}$ as a root, so 
it is the minimal polynomial $h_{\beta}(X)$ of $b_{\beta}$. By the chain rule, 
\[
\frac{dh_{\beta}}{d_{X_{\beta}}}(b_{\beta})=\left(\frac{a_{\beta}}{a_{\alpha}}\right)^n
\frac{dh_{\alpha}}{dX}(b_{\alpha})\left(\frac{a_{\alpha}}{a_{\beta}}\right),
\]
and so
\begin{equation}                            \label{eqCR}
h_{\alpha}'(b_{\alpha})=\left(\frac{a_{\alpha}}{a_{\beta}}\right)^{n-1}
h_{\beta}'(b_{\beta}).
\end{equation}

\sn
Proof of Theorem~\ref{LimProp}:\n
From the natural $A$-algebra isomorphisms $A[b_{\alpha}]\cong A[X_{\alpha}]/(h_{\alpha}
(X_{\alpha}))$, where $h_{\alpha}(X_{\alpha})$ is the minimal polynomial of $b_{\alpha}$
over $K$, we have that for $\alpha\le\beta$, the natural inclusion of $A$-algebras
$A[b_{\alpha}]\subset  A[b_{\beta}]$ is induced by sending $b_\alpha$ to 
$\frac{a_{\alpha}}{a_{\beta}}b_{\beta}+c_{\alpha,\beta}$.
 
By
(\ref{Kahlercomp}), for $\alpha\in S$ we have a natural
isomorphism of $A[b_{\alpha}]$-modules 
\[
\Omega_{A[b_{\alpha}]|A}
\cong \left[A[b_{\alpha}]/(h_{\alpha}'(b_{\alpha}))\right]dX_{\alpha}\>,
\]
and so 
\[
\Omega_{A[b_{\alpha}]|A}\otimes_{A[b_{\alpha}]} B
\cong \left[B/(h_{\alpha}'(b_{\alpha}))\right]dX_{\alpha}\>.
\]
For $\alpha<\beta$, by the universal property of derivations (Proposition on page 182 \cite{Mat}), there is a unique  $B$-module homomorphism
\[
\lambda_{\alpha,\beta}:\Omega_{A[b_{\alpha}]|A}\otimes_{A[b_{\alpha}]}B\rightarrow
\Omega_{A[b_{\beta}]|A}\otimes_{A[b_{\beta}]}B
\]
such that there is a commutative diagram of $B$-module homomorphisms
\[
\begin{array}{ccc}
\Omega_{A[b_{\alpha}]|A}\otimes_{A[b_{\alpha}]}B&\stackrel{\lambda_{\alpha,\beta}}
{\rightarrow}&\Omega_{A[b_{\beta}]|A}\otimes_{A[b_{\beta}]}B\\
\uparrow&&\uparrow \\
A[b_{\alpha}]\otimes_{A[b_{\alpha}]}B&\rightarrow &A[b_{\beta}]\otimes_{A[b_{\beta}]}B
\end{array}
\]  
where the vertical arrows are the respective maps
$$
d_{A[b_{\alpha}]/A}\otimes 1:A[b_{\alpha}]\otimes_{A[b_{\alpha}]}B\rightarrow
\Omega_{A[b_{\alpha}]|A}\otimes_{A[b_{\alpha}]}B
$$ 
and
$$
d_{A[b_{\beta}]/A}\otimes 1:A[b_{\beta}]\otimes_{A[b_{\beta}]}B\rightarrow
\Omega_{A[b_{\beta}]|A}\otimes_{A[b_{\beta}]}B.
$$   
Suppose that $z\in A[b_{\alpha}]$. Then there exists $g(X_{\alpha})\in A[X_{\alpha}]$ 
such that $z=g(b_{\alpha})$, hence $d_{A[b_{\alpha}]/A}(z)=\frac{dg}{dX_{\alpha}}
(b_{\alpha})dX_{\alpha}$. We have that $z=\left(g(\frac{a_{\alpha}}{a_{\beta}}X_{\beta}
+c_{\alpha,\beta})\right)(b_{\beta})$  so that by the chain rule, 
$$
d_{A[b_{\beta}]/A}(z)\>=\>\left(\frac{d}{dX_{\beta}}g\left(\frac{a_{\alpha}}{a_{\beta}}
X_{\beta}+c_{\alpha,\beta}\right)\right)(b_{\beta})dX_{\beta}
\>=\>\frac{dg}{dX_{\alpha}}(b_{\alpha})\frac{a_{\alpha}}{a_{\beta}}dX_{\beta}.
$$  
Thus $\lambda_{\alpha,\beta}$ is the $B$-module homomorphism defined by mapping 
$dX_{\alpha}$ to $\frac{a_{\alpha}}{a_{\beta}}dX_{\beta}\,$. 

In order to compute the direct limit of the directed system of $B$-module homomorphisms 
$\lambda_{\alpha,\beta}:\Omega_{A[b_{\alpha}]|A}\otimes_{A[b_{\alpha}]}B\rightarrow
\Omega_{A[b_{\beta}]|A}$ for $\alpha<\beta$, we will introduce an equivalent directed
system, which is a little simpler. 
For $\alpha\in S$, let $M_{\alpha}$ be the $B$-module $M_{\alpha}=B/(h'_{\alpha}
(b_{\alpha}))$. The $M_{\alpha}$ are a family of $B$-modules. We have isomorphisms of 
$B$-modules 
$\tau_{\alpha}:\Omega_{A[b_{\alpha}]|A}\otimes_{A[b_{\alpha}]}B\rightarrow M_{\alpha}$
defined by mapping $dX_{\alpha}$ to~$1$.

For all $\alpha,\beta\in S$ with $\alpha<\beta$, we have commutative diagrams of 
$B$-module homomorphisms
$$
\begin{array}{ccc}
\Omega_{A[b_{\alpha}]|A}\otimes_{A[b_{\alpha}]}B&\stackrel{\lambda_{\alpha,\beta}}
{\rightarrow}&\Omega_{A[b_{\beta}]|A}\otimes_{A[b_{\beta}]}B\\
\downarrow\tau_{\alpha}&&\downarrow \tau_{\beta}\\
M_{\alpha}&\stackrel{\tau_{\beta}\lambda_{\alpha,\beta}\tau_{\alpha}^{-1}}{\rightarrow}
&M_{\beta}.
\end{array}
$$
and we see that $\tau_{\beta}\lambda_{\alpha,\beta}\tau_{\alpha}^{-1}$ is just
multiplication by $\frac{a_{\alpha}}{a_{\beta}}$.
Thus the directed systems  
\[
\Omega_{A[b_{\alpha}]|A}\otimes_{A[b_{\alpha}]}B\stackrel{\lambda_{\alpha,\beta}}
\rightarrow \Omega_{A[b_{\beta}]|A}\otimes_{A[b_{\beta}]}B \,\mbox{ for }\, \alpha<\beta
\quad\mbox{ and }\quad
M_{\alpha}\stackrel{\frac{a_{\alpha}}{a_{\beta}}}\rightarrow M_{\beta} 
\,\mbox{ for }\, \alpha<\beta 
\]
are equivalent.

We have isomorphisms of $B$-modules
$$
\Omega_{B|A}\cong \lim_{\rightarrow}\left[\Omega_{A[b_{\alpha}]|A}
\otimes_{A[b_{\alpha}]}B\right]\cong \lim_{\rightarrow}M_{\alpha}
$$
by \cite[Corollary 16.7, page 394]{Eis}, where the direct limits are over $\alpha\in S$.

Write $M_{\alpha}=R_{\alpha}/T_{\alpha}$ where $R_{\alpha}$ is the directed system of
$B$-modules $R_{\alpha}=B$ for $\alpha\in S$, with $B$-module homomorphisms
$\frac{a_{\alpha}}{a_{\beta}}:R_{\alpha}\rightarrow R_{\beta}$ for $\alpha<\beta$, and 
$T_{\alpha}$ is the directed system of $B$-modules $T_{\alpha}=
h_{\alpha}'(b_{\alpha})R_{\alpha}$ for $\alpha\in S$, with $B$-module homomorphisms
$\frac{a_{\alpha}}{a_{\beta}}:T_{\alpha}\rightarrow T_{\beta}$ for $\alpha\le\beta$.
We have short exact sequences of $B$-modules 
$$
0\rightarrow T_{\alpha}\rightarrow R_{\alpha}\rightarrow M_{\alpha}\rightarrow 0
$$
which are compatible with multiplication by $\frac{a_{\alpha}}{a_{\beta}}$ for $\alpha<
\beta$. Thus
$$
\lim_{\rightarrow}M_{\gamma}\cong \lim_{\rightarrow}R_{\gamma}/
\lim_{\rightarrow}T_{\gamma}
$$
by \cite[Theorem 2.18]{Rot}.

We now compute $\displaystyle\lim_{\rightarrow} R_{\gamma}\,$. For $\alpha\in S$, we 
have $B$-module homomorphisms $R_{\alpha}\stackrel{a_{\alpha}}{\rightarrow} B$ which 
give commutative diagrams
$$
\begin{array}{lcr}
R_{\alpha}&\stackrel{\frac{a_{\alpha}}{a_{\beta}}}{\rightarrow}& R_{\beta}\\
a_{\alpha}\searrow&&\swarrow a_{\beta}\\
&B&
\end{array}
$$
for $\alpha<\beta$. By the universal property of direct limits (\cite[Proposition 11.1]
{AG} or \cite[Exercise 18, page 33]{AM}) there exists a unique $B$-module homomorphism 
$\Psi:\displaystyle\lim_{\rightarrow} R_{\gamma}\rightarrow B$ giving  commutative 
diagrams of $B$-module homomorphisms

$$
\begin{array}{rl}
R_{\alpha}&\\
\frac{a_{\alpha}}{a_{\beta}}\downarrow&\searrow\phi_{\alpha}\\
R_{\beta}&\stackrel{\phi_{\beta}}{\rightarrow}\displaystyle\>\lim_{\rightarrow}R_{\gamma}\\
a_{\beta}\downarrow&\swarrow\Psi\\
B&\\
\end{array}
$$
for $\alpha<\beta$. Here $\phi_{\alpha}:R_{\alpha}\rightarrow \displaystyle
\lim_{\rightarrow} R_{\gamma}$ are the canonical maps of the direct product.

As we will now show, it follows from some standard properties of direct limits
(\cite[Proposition 11.3]{AG} 
or \cite[Exercise 15, page 33]{AM}) and some diagram chasing that as $B$-ideals,
$$
\lim_{\rightarrow}R_{\gamma}\>\cong\> (a_{\alpha}\mid \alpha\in S)\>=\>U\>.
$$
 
Suppose that $u\in\displaystyle\lim_{\rightarrow}R_{\gamma}$. Then there exist $\alpha
\in S$ and $b\in R_{\alpha}=B$ such that $u=\phi_{\alpha}(b)$. Thus $\Psi(u)=a_{\alpha}
b\in U$. Given $\alpha\in S$, $\Psi(\phi_{\alpha}(1))=a_{\alpha}$. Thus the image of 
$\Psi$ is $U$. Suppose that $0\ne u\in \displaystyle\lim_{\rightarrow}R_{\gamma}$. 
Then there exist 
$\alpha\in S$ and $0\ne b\in R_{\alpha}$ such that $\phi_{\alpha}(b)=u$ so 
$\Psi(u)=a_{\alpha}b\ne 0$. Thus $\Psi$ is an isomorphism and so
$$
\lim_{\rightarrow}R_{\gamma}\cong U.
$$
Similarly, 
$$
\displaystyle\lim_{\rightarrow}T_{\gamma}\cong (a_{\alpha}h_{\alpha}'(b_{\alpha})\mid
\alpha\in S).
$$
By (\ref{eqCR}), we have that  for $\alpha,\beta<\gamma$,
$$
a_{\alpha}h_{\beta}'(b_{\beta})=\frac{a_{\alpha}}{a_{\gamma}}\left(\frac{a_{\beta}}
{a_{\gamma}}\right)^{n-1}a_{\gamma}h_{\gamma}'(b_{\gamma}).
$$
Thus $(a_{\alpha}h_{\alpha}'(b_{\alpha})\mid \alpha\in S)=UV$ and so  
$\Omega_{B|A}\cong U/UV$.
\qed

%
%

\parb
If we choose any $\gamma\in S$, then we will still have 
\[
\bigcup_{\gamma\leq\alpha\in S}A[b_{\alpha}]\>=\>B 
\]
since $(A[b_\alpha])_{\alpha\in S}$ is an increasing chain.
Hence we can always assume that $S$ has a minimal element $\gamma$. 
 
\pars
From now on we consider the case of $A=\cO_K$ and $B=\cO_L\,$. 
Assume that $\alpha\le\beta$. Then $a_{\beta}\mid a_{\alpha}$, so that 
$\frac{a_\alpha}{a_\beta}\,,\,\left(\frac{a_\alpha}{a_\beta}\right)^{n-1}\in \cO_K\,$. 
From (\ref{b_ab_b}) and (\ref{eqCR}) it thus follows that $b_\alpha\in (b_\beta)$
and $h'_\alpha(b_\alpha)\in (h'_\beta(b_\beta))$. This shows that $\cO_L$-ideals 
$(b_\alpha)_{\alpha\in S}$ and $(h'_\alpha(a_\alpha))_{\alpha\in S}$ form increasing 
chains.

\pars
Using (\ref{eqCR}) with $\beta=\gamma$, we obtain:
\begin{equation}                 \label{altpresV}
V \>=\>(h'_\alpha(b_{\alpha})\mid \alpha\in S)\>=\>
\left(\left(\frac{a_\alpha}{a_\gamma}\right)^{n-1}h'_\gamma(b_{\gamma})
\mid\alpha\in S\right)\>=\>\frac{h'_\gamma(b_{\gamma})}{a_\gamma^{n-1}}U^{n-1}\>.
\end{equation}
This proves (\ref{altpres}).

\mn
%
%
\subsection{The ideal $V$ of differents}      \label{sectVd}
\mbox{ }\sn
Under the assumptions of Theorem~\ref{LimProp}, with $(L|K, v)$ a valued field extension
satisfying condition~(\ref{as}) and $A=\cO_K$ and $B=\cO_L\,$, we will now show that $V$ is 
the $\cO_L$-ideal generated by the differents of the generators $b_\alpha$ of $\cO_L$ over 
$\cO_K\,$. 
Take any generator $b\in\cO_L$ of $L|K$ and let $h$ be its minimal polynomial over $K$. 
Denote the different of $b$ by $\delta(b):=h'(b)$. For $i\geq 1$ we have
\[
b^i-\sigma b^i\>=\> b^i-(\sigma b)^i\>=\>b^i-(b+(\sigma b-b))^i\>=\>
\sum_{j=0}^{i-1} \binom{i}{j} b^j(\sigma b-b)^{i-j}\>.
\]
Since the extension is unibranched, we have $v\sigma b=vb$, whence $v(b-\sigma b)\geq 
vb\geq 0$. Consequently,
\[
v(b^i-\sigma b^i) \>\geq\> v(\sigma b-b)\>=\> v(b-\sigma b)\>.
\]

\pars
Every $b\in \cO_K[b_\alpha]\setminus\cO_K$ is of the form
\[
b\>=\>\sum_{i=0}^{n-1} c_i b_\alpha^i  
\]
with $c_i\in \cO_K\,$.
Therefore,
\begin{eqnarray*}
v\delta(b)&=& v\prod_{\mbox{\tiny\rm id}\ne\sigma\in G} (b-\sigma b)\>=\>
\sum_{\mbox{\tiny\rm id}\ne\sigma\in G} v\left(\sum_{i=0}^{n-1} c_i b_\alpha^i
-\sigma \sum_{i=0}^{n-1} c_i (b_\alpha)^i\right)\\
&=& \sum_{\mbox{\tiny\rm id}\ne\sigma\in G} v\sum_{i=1}^{n-1} c_i (b_\alpha^i
-\sigma b_\alpha^i)\>.
\end{eqnarray*}
For $1\leq i\leq n-1$, we have 
\[
v c_i (b_\alpha^i-\sigma b_\alpha^i)\>\geq\>v(b_\alpha^i-\sigma b_\alpha^i)\>\geq\>
v(b_\alpha-\sigma b_\alpha)\>,
\]
showing that 
\[
v\sum_{i=1}^{n-1} c_i (b_\alpha^i-\sigma b_\alpha^i) \>\geq\>v(b_\alpha
-\sigma b_\alpha)\>.
\]
Hence,
\[
v\delta(b) \>\geq\> \sum_{\mbox{\tiny\rm id}\ne\sigma\in G} v(b_\alpha-\sigma b_\alpha)
\>=\> vh'_\alpha(b_\alpha)\>.
\]

We use this to conclude:
\begin{proposition}                             \label{VdO}
Under the above assumptions, we have:
\begin{equation}
V\>=\>(\delta(b) \mid b\in\cO_L\setminus\cO_K)\>.
\end{equation}
\end{proposition}
\begin{proof}
Since $\cO_L$ is the union of the chain of rings 
$\cO_K[b_\alpha]$, $\alpha\in S$, we have
\[
(\delta(b) \mid b\in\cO_L\setminus\cO_K)\>=\>\bigcup_{\alpha\in S} 
(\delta(b) \mid b\in \cO_K[b_\alpha]\setminus\cO_K)\>=\>\bigcup_{\alpha\in S} 
(h'_\alpha (b_\alpha))\>=\> V\>.
\]
\end{proof}

\mn
%
%
\subsection{The case of Artin-Schreier defect extensions} 
%
\begin{theorem}                       \label{OASdef} 
Let $(L|K,v)$ be an Artin-Schreier defect extension with ramification ideal
$I_{\cE}\,$. Then there is an $\cO_L$-module isomorphism
\[
\Omega_{\cO_L/\cO_K}\>\cong\> I_{\cE}/I_{\cE}^p\>.
\]
\end{theorem}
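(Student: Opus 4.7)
The strategy is to apply Proposition~\ref{LimProp} to $A=\cO_K$, $B=\cO_L$, feeding in the chain of subrings $\cO_K[\vartheta_c]$ supplied by Theorem~\ref{O_LforASK} together with the scalars $a_c:=t_c=t_{v(\vartheta-c)}$, and then to identify the resulting $U/UV$ with $I_\cE/I_\cE^p$ using Lemma~\ref{vderimmAS}.

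First I would verify the hypotheses of Proposition~\ref{LimProp}. The extension $L|K$ is a finite algebraic extension of degree $p$; the valuation ring $\cO_K$ is normal, and $\cO_L$ is integral over $\cO_K$ since every element of $\cO_L$ has its minimal polynomial over $K$ lying in $\cO_K[X]$ (the extension is unibranched, so by the Gauss Lemma this polynomial has coefficients in $\cO_K$). To obtain a totally ordered index set I take $S:=v(\vartheta-K)\subseteq vK$ with its natural order, and for each $\gamma\in S$ choose some $c_\gamma\in K$ with $v(\vartheta-c_\gamma)=\gamma$; set $b_\gamma:=\vartheta_{c_\gamma}$ and $a_\gamma:=t_{c_\gamma}$, so that $va_\gamma=-\gamma$. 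Theorem~\ref{O_LforASK} gives $\bigcup_{\gamma\in S}\cO_K[b_\gamma]=\cO_L$ and the chain structure $\cO_K[b_\gamma]\subseteq \cO_K[b_{\gamma'}]$ whenever $\gamma\leq\gamma'$, while $a_{\gamma'}\mid a_\gamma$ in $\cO_K$ follows from $va_\gamma=-\gamma\geq -\gamma'=va_{\gamma'}$. Equation~(\ref{thetac1c2}) reads, with this notation,
\[
b_\gamma\>=\>\frac{a_\gamma}{a_{\gamma'}}\,b_{\gamma'}+c_{\gamma,\gamma'}\>,\qquad c_{\gamma,\gamma'}:=t_{c_\gamma}(c_{\gamma'}-c_\gamma)\in\cO_K\>,
\]
where membership of $c_{\gamma,\gamma'}$ in $\cO_K$ is the content of the paragraph immediately following~(\ref{thetac1c2}).

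With the hypotheses verified, Proposition~\ref{LimProp} yields an $\cO_L$-module isomorphism $\Omega_{\cO_L|\cO_K}\cong U/UV$, where $U=(a_\gamma\mid\gamma\in S)\cO_L$ and $V=(h'_\gamma(b_\gamma)\mid\gamma\in S)\cO_L$ with $h_\gamma$ the minimal polynomial of $b_\gamma$ over $K$. By Lemma~\ref{vderimmAS} and the identification in~(\ref{IEforASdef}), $U=(t_c\mid c\in K)\cO_L=I_\cE$ and $V=(g'_c(\vartheta_c)\mid c\in K)\cO_L=I_\cE^{p-1}$. Hence $UV=I_\cE^p$, and the desired isomorphism $\Omega_{\cO_L|\cO_K}\cong I_\cE/I_\cE^p$ drops out.

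No step looks deep: the only point requiring care is packaging the parametrization so that the index set is genuinely totally ordered, which is handled by indexing via values $\gamma\in v(\vartheta-K)$ rather than by the elements $c\in K$ themselves (by Lemma~\ref{l2} the ring $\cO_K[\vartheta_c]$ depends only on $v(\vartheta-c)$). Once this bookkeeping is in place, the proof is just the insertion of Theorem~\ref{O_LforASK} and Lemma~\ref{vderimmAS} into the conclusion of Proposition~\ref{LimProp}.
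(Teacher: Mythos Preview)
Your proof is correct and follows essentially the same route as the paper's own proof: both feed the chain $\cO_K[\vartheta_c]$ from Theorem~\ref{O_LforASK}, the transition formula~(\ref{thetac1c2}), and the derivative computation from Lemma~\ref{vderimmAS} into Proposition~\ref{LimProp} to obtain $U=I_\cE$ and $V=I_\cE^{p-1}$. Your choice to index by $S=v(\vartheta-K)$ rather than by $K$ itself is in fact a small improvement, since the paper's ordering on $K$ is only a preorder, not a total order as Proposition~\ref{LimProp} literally requires.
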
 
\begin{proof} 
Let $\vartheta$ be an Artin-Schreier generator of $L|K$.
By Theorem \ref{O_LforASK}, the $\cO_K$-algebras $\cO_K[\vartheta_c]$, 
with $c\in K$, form a chain of subrings of $L$ whose union is $\cO_L$.  For 
$c\in K$, $\vartheta_c=t_c(\vartheta-c)$, where $t_c\in \cO_K$ is such that 
$v\vartheta_c=0$. For  $v(\vartheta-c_1)<v(\vartheta-c_2)$, by (\ref{thetac1c2})
we have that 
\[
\vartheta_{c_1}=\frac{t_{c_1}}{t_{c_2}}\vartheta_{c_2}+t_{c_1}(c_2-c_1)
\]
with $\frac{t_{c_1}}{t_{c_2}}$, $t_{c_1}(c_2-c_1)\in \mathcal O_K$.
We will apply Theorem~\ref{LimProp} with $A=\cO_K\,$,
$B=\cO_L\,$, and $S=v(\vartheta-K)$. For each $\alpha\in S$ we choose $c_\alpha\in K$ 
such that $v(\vartheta-c_\alpha)=\alpha$ and set $b_{\alpha}=\vartheta_{c_\alpha}$. 
We set $a_{\alpha}=t_{c_\alpha}$ and $c_{\alpha,\beta}=t_{c_\alpha}(c_\beta-c_\alpha)$. 
We denote by $h_{\alpha}$ the minimal polynomial of $b_{\alpha}=\vartheta_{c_\alpha}$ 
over $K$. Thus in the notation of Lemma~\ref{vderimmAS}, $h_{\alpha}=g_{c_\alpha}$ so 
that $h'_\alpha(b_{\alpha})=g_{c_\alpha}'(\vartheta_{c_\alpha})=-t_{c_\alpha}^{p-1}$ by
equation (\ref{ASDLDer}). Hence for every $\alpha\in S$,
$h'_\alpha(b_{\alpha})/a_{\alpha}^{p-1}=-1$. Our theorem now follows from
equation~(\ref{altpres}) of Theorem~\ref{LimProp}, together with Lemma~\ref{vderimmAS} 
which shows that the $\cO_L$-ideal $U=(t_{\alpha}\mid\alpha\in S)$ is equal to 
$I_{\cE}\,$.
\end{proof}

\mn
%
%
\subsection{The case of Kummer defect extensions of degree $p$} 
%
\begin{theorem}                            \label{OKumdef} 
Assume that $\chara K=0$ and let $(L|K,v)$ be a Kummer defect extension of degree $p$
with ramification ideal $I_{\cE}\,$. Then there is an $\cO_L$-module isomorphism
\[
\Omega_{\cO_L|\cO_K}\>\cong\> I_{\cE}/I_{\cE}^p\>.
\]
\end{theorem}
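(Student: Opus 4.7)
The plan is to mimic the proof of Theorem \ref{OASdef}, applying Proposition \ref{LimProp} with the Kummer-case data from Section \ref{iASKext} in place of the Artin-Schreier data. First I would pick a Kummer generator $\eta$ of $L|K$ which is a $1$-unit; this is possible because the defect extension $\cE$ is immediate and every immediate Kummer extension admits a $1$-unit Kummer generator, as recorded in Section \ref{sectASKext}.

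In Proposition \ref{LimProp} I take $A = \cO_K$, $B = \cO_L$, and let $S$ be the set of $1$-units of $K$ equipped with the preorder $\alpha \leq \beta \Leftrightarrow v(\eta - \alpha) \leq v(\eta - \beta)$. Set
\[
b_\alpha \>=\> \eta_{<\alpha>}, \quad a_\alpha \>=\> T_\alpha, \quad c_{\alpha, \beta} \>=\> T_\alpha \chi(\beta - \alpha), \quad h_\alpha \>=\> f_\alpha,
\]
where $f_\alpha$ is the minimal polynomial of $\eta_{<\alpha>}$ over $K$. Corollary \ref{CorKumIm} then verifies both the chain property $\cO_K[b_\alpha] \subseteq \cO_K[b_\beta]$ for $\alpha \leq \beta$ and the exhaustion $\bigcup_{\alpha \in S} \cO_K[b_\alpha] = \cO_L$. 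Equation (\ref{etac1c2}) supplies the substitution relation $b_\alpha = (a_\alpha/a_\beta)\, b_\beta + c_{\alpha,\beta}$, and the paragraph immediately following (\ref{etac1c2}) confirms that for $\alpha \leq \beta$ the ratio $T_\alpha / T_\beta$ and the element $T_\alpha \chi(\beta - \alpha)$ both lie in $\cO_K$; in particular $a_\beta \mid a_\alpha$ in $A$, as required by the proposition.

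It remains to identify the ideals $U = (a_\alpha \mid \alpha \in S)$ and $V = (h'_\alpha(b_\alpha) \mid \alpha \in S)$ of $\cO_L$ appearing in the conclusion of Proposition \ref{LimProp}. By equation~(\ref{IEforKumdef}) of Lemma \ref{vderimmK}, $U = (T_\alpha \mid \alpha \in S) = I_\cE$; by the same lemma $V = (f'_\alpha(\eta_{<\alpha>}) \mid \alpha \in S) = I_\cE^{p-1}$. Hence $UV = I_\cE \cdot I_\cE^{p-1} = I_\cE^p$, and Proposition \ref{LimProp} yields
\[
\Omega_{\cO_L | \cO_K} \>\cong\> U / UV \>=\> I_\cE / I_\cE^p\>,
\]
which is the claimed isomorphism.

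I do not expect a genuine obstacle here: all the ingredients — the chain of $\cO_K$-subalgebras exhausting $\cO_L$, the substitution relation with coefficients in $\cO_K$, and the explicit description of $f'_\alpha(b_\alpha)$ up to a unit factor — have already been put in place in Section \ref{iASKext} and Lemma \ref{vderimmK}. The only nominal departure from the Artin-Schreier case is the restriction of the parameter $c$ to $1$-units and the appearance of the factor $\chi$ hidden inside $\eta_{<c>}$, but both are absorbed smoothly into the same formal argument, since $p\chi^{p-1}\eta^{p-1}$ is a unit in $\cO_L$ and so does not affect the ideal $V$.
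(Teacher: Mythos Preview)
Your proposal is correct and follows essentially the same route as the paper's own proof: choose a $1$-unit Kummer generator, feed the data $b_\alpha=\eta_{<\alpha>}$, $a_\alpha=T_\alpha$, $c_{\alpha,\beta}=T_\alpha\chi(\beta-\alpha)$ into Proposition~\ref{LimProp}, and read off $U=I_\cE$ and $V=I_\cE^{p-1}$ from Lemma~\ref{vderimmK}. Your restriction of the index set to $1$-units of $K$ is in fact slightly more precise than the paper's $S=K$, since Corollary~\ref{CorKumIm} is stated only for $1$-units; and your observation that $p(\chi\eta)^{p-1}$ is a unit in $\cO_L$ (so it drops out of the ideal $V$) makes explicit what the paper leaves to equation~(\ref{vf'_c}).
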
 
\begin{proof} 
Let $\eta$ be a Kummer generator of $L|K$ which is a $1$-unit. We use the notation from
Section \ref{iASKext}. By Corollary~\ref{CorKumIm}, the $\cO_K$-algebras $\cO_K
[\eta_{<c>}]$ with $c$ a 1-unit in $K$ form a chain of subrings of $L$ whose union is 
$\cO_L$. For $v(\eta-c_1)<v(\eta-c_2)$, by (\ref{etac1c2}) we have that 
\[
\eta_{<c_1>}=\frac{T_{c_1}}{T_{c_2}}\eta_{<c_2>}+T_{c_1}\chi(c_2-c_1)
\]
with $\frac{T_{c_1}}{T_{c_2}}$, $T_{c_1}\chi(c_2-c_1)\in \mathcal O_K$. We will apply
Theorem~\ref{LimProp} with $A=\cO_K$, $B=\cO_L\,$, and $S=v(\eta-K)$. For each 
$\alpha\in S$ we choose $c_\alpha\in K$ such that $v(\eta-c_\alpha)=\alpha$. Let
$b_{\alpha}=\eta_{<c_\alpha>}$, $a_{\alpha}=T_{c_\alpha}$, and $c_{\alpha,\beta}
=T_{c_\alpha}\chi(c_\beta-c_\alpha)$. We denote by $h_{\alpha}$ the minimal polynomial 
of $b_{\alpha}=\eta_{<c_\alpha>}$ over $K$. Thus in the notation of Lemma~\ref{vderimmK},
$h_{\alpha}=f_{c_\alpha}$ so that by equation (\ref{IEforKumdef}), $h'_\alpha(b_{\alpha})
=f_{c_\alpha}'(\eta_{<\alpha>})=T_{c_\alpha}^{p-1} u$ with $u$ a unit in $\cO_L\,$. Hence 
for every $\alpha\in S$, $h'_\alpha(b_{\alpha})/a_{\alpha}^{p-1}=u$. 
Our theorem now follows from equation~(\ref{altpres}) of Theorem~\ref{LimProp}, together 
with Lemma~\ref{vderimmK}, which shows that the $\mathcal O_L$-ideal $U=(T_{\alpha}
\mid\alpha\in S)$ is equal to $I_{\cE}\,$.
\end{proof}

\mn
%
%
\subsection{The valuation ring $\cO(I)$ and the annihilator}    \label{sectann}
\mbox{ }\sn
Take an $\cO_L$-ideal $I$. We define $\cO(I)$ to be the largest valuation ring $\cO$ 
containing $\cO_L$ such that $I$ is an $\cO$-ideal. Further, we define $H(I)$ to be the 
largest convex subgroup $H$ of $vL$ such that $vI+H=vI$. In the terminology of 
\cite{Ku60, Kcuts}, this is called the \bfind{invariance group} of $vI$, denoted by
$\cG(vI)$. We infer from \cite[Theorem~3.6]{Kcuts}:
\begin{proposition}                    \label{O(I)}
For every $\cO_L$-ideal $I$,
\begin{equation}
\cO(I)\>=\> \cO_{v_{H(I)}}\>.
\end{equation}
\end{proposition}

Accordingly, we set $\cM(I):=\cM_{v_{H(I)}}\,$. We will use the following facts:
%
\begin{proposition}                    \label{ann}
Take an $\cO_L$-ideal $U$ and set $V=bU^{n-1}$, where $b\in\cO_L\,$. 
\sn
1) We have $H(U)=H(V)$, $\cO(U)=\cO(V)$ and $\cM(U)=\cM(V)$.
\sn
2) If there is $a\in K$ such that $v_{H(V)}V$ has infimum 
$v_{H(V)}a$ in $v_{H(V)}K$ but $v_{H(V)}V$ does not contain this infimum, then 
\begin{equation}             
\ann U/UV\>=\>a\cO(V) \>,
\end{equation}
which properly contains $V$. In all other cases, $\ann U/UV=V$. 

In all cases, $\cM(V)\,\ann U/UV\subseteq V$. 

\sn
3) If $\cM_L$ annihilates $U/UV$, then $U/UV=0$, except if $\cM_L$ is principal and either
$U=V=\cM_L$ (which holds if and only if $n=2$ and $b\in \cO_L^\times$), or $U=\cO_L$ and
$V=\cM_L=(b)$ (which holds if and only if $n=1$ and $b\notin \cO_L^\times$). In these 
exceptional cases, $\cM_L=\ann U/UV$. 
\end{proposition}
\begin{proof}
1) follows from \cite[Lemmas~2.14 and~3.7]{Kcuts}.
2) and 3) follow from \cite[Proposition~3.19]{Kcuts}.
\end{proof}

\pars
In Theorems~\ref{OASdef} and~\ref{OKumdef} we proved that in the case of $\cE=
(L|K,v)$ an Artin-Schreier or a Kummer defect extension of degree $p$ with ramification 
ideal $I_{\cE}$ we have $\Omega_{\cO_L/\cO_K}\cong I_{\cE}/I_{\cE}^p$. Therefore, we will 
now assume that the extension $\cE=(L|K,v)$ satisfies (\ref{as}) and compute the 
corresponding annihilator. We observe:
\begin{lemma}
Assume that the extension $\cE=(L|K,v)$ satisfies (\ref{as}) and has independent defect. 
Then 
\[
H(vI_\cE)\>=\>H_\cE\>. 
\]
\end{lemma}
\begin{proof}
We have $vI_\cE=\Sigma_\cE=\{\alpha\in vL\mid \alpha >H_\cE\}$. In \cite{Kcuts}, this set 
is denoted by $H_\cE^+$, and in \cite[part 5) of Lemma~2.14]{Kcuts} (where we take $\alpha
=0$) it is shown that for each convex subgroup $H$ the invariance group of $H^+$ equals $H$.
\end{proof}

Analogously, we define $H_\cE:=H(vI_\cE)$ in case $\cE$ has dependent defect. Then we 
obtain from Proposition~\ref{O(I)} that
\begin{equation}
\cO(I_\cE)\>=\>\cO_{v_{H_\cE}} 
\end{equation}
holds for all extensions $\cE=(L|K,v)$ satisfying (\ref{as}). Applying 
Proposition~\ref{ann}, we find:
\begin{proposition}                         \label{propann}
Assume that the extension $\cE=(L|K,v)$ satisfies (\ref{as}). 
\sn
1) We have $H(I_\cE)=H(I_\cE^{p-1})=H(I_\cE^p)$, $\cO(I_\cE)=\cO(I_\cE^{p-1})=\cO(I_\cE^p)$ 
and $\cM(I_\cE)=\cM(I_\cE^{p-1})=\cM(I_\cE^p)$.
\sn
2) If there is $a\in K$ such that $v_{H_\cE} I_\cE^{p-1}$ has infimum $v_{H_\cE}a$ in
$v_{H_\cE}K$ but $v_{H_\cE} I_\cE^{p-1}$ does not contain this infimum, then 
\begin{equation}             
\ann I_\cE/I_\cE^p\>=\>a\cO(I_\cE) \>,
\end{equation}
which properly contains $I_\cE^{p-1}$. In all other cases, $\ann I_\cE/I_\cE^p=I_\cE^{p-1}$. 

In all cases, $\cM(I_\cE)\,\ann I_\cE/I_\cE^p\subseteq I_\cE^{p-1}$. 
\sn
3) If $\cM_L$ annihilates $\Omega_{\cO_L|\cO_K}$, then $\Omega_{\cO_L|\cO_K}=0$ and 
$\ann\Omega_{\cO_L|\cO_K}=\cO_L\,$.
\end{proposition}
\begin{proof}
Part 1) follows from \cite[Lemmas~2.14 and~3.7]{Kcuts}. Part 2) follows from part 2) of 
Proposition~\ref{ann} where $U=I_\cE$ and $V=I_\cE^{p-1}$, and using that $\cO(I_\cE)=
\cO(I_\cE^{p-1})$. Part 3) follows from Theorems~\ref{OASdef} and~\ref{OKumdef} and part 3) 
of Proposition~\ref{ann} since under our assumptions on $\cE$, $I_\cE^{p-1}$ cannot be principal.
\end{proof}

\bn
%
%
\section{The trace of valuation rings and differents}     \label{secttrace}
%
%
%
\subsection{Defect extensions of prime degree}          
\mbox{ }\sn
In this section we will consider the trace on an extension $\cE=(L|K,v)$ 
that satisfies (\ref{as}). If $L|K$ is an Artin-Schreier extension, then we write 
$L=K(\vartheta)$ where $\vartheta$ is an Artin-Schreier generator. If $\chara K=0$
and $L|K$ is a Kummer extension, then we write $L=K(\eta)$ where $\eta$ is a Kummer
generator, that is, $\eta^p\in K$; as explained at the beginning of
Section~\ref{sectGaldefdegp}, we can assume that $\eta$ is a 1-unit.

\pars
Take $a\in L\setminus K$. The proof of the following fact can be found in 
\cite[Section 6.3]{KKS}.
\begin{lemma}                             \label{trace}
Take a separable field extension $K(a)|K$ of degree $n$ and let $f(X)\in K[X]$ be the 
minimal polynomial of $a$ over $K$. Then
\begin{equation}
\tr_{K(a)|K} \left(\frac{a^m}{f'(a)}\right)\>=\> \left\{
\begin{array}{ll}
0 & \mbox{if } 1\leq m\leq n-2\\
1 & \mbox{if } m=n-1\>.
\end{array}
\right.
\end{equation}
\end{lemma}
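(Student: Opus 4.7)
The plan is to prove this via Lagrange interpolation applied to the conjugates of $a$. Since $K(a)|K$ is separable of degree $n$, the minimal polynomial $f(X)$ has $n$ distinct roots $a_1=a, a_2, \ldots, a_n$ in some splitting field $F$ of $f$ over $K$, and these roots are exactly the images of $a$ under the $n$ distinct $K$-embeddings of $K(a)$ into $F$. Thus the trace can be expressed as
\[
\tr_{K(a)|K}\left(\frac{a^m}{f'(a)}\right) \>=\> \sum_{i=1}^n \frac{a_i^m}{f'(a_i)}\>,
\]
and the problem is reduced to computing this symmetric sum.

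Next, I would invoke the Lagrange interpolation formula: for any polynomial $g(X)\in F[X]$ of degree at most $n-1$,
\[
g(X) \>=\> \sum_{i=1}^n g(a_i)\>\ell_i(X)\>, \quad\text{where}\quad \ell_i(X) \>=\> \prod_{j\ne i}\frac{X-a_j}{a_i-a_j} \>=\> \frac{f(X)}{(X-a_i)f'(a_i)}\>.
\]
The key observation is that for each $i$ the polynomial $f(X)/(X-a_i)$ is \emph{monic} of degree $n-1$, so the coefficient of $X^{n-1}$ in $\ell_i(X)$ equals $1/f'(a_i)$.

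Applying this identity with $g(X)=X^m$ for $0\leq m\leq n-1$ and comparing the coefficient of $X^{n-1}$ on both sides, I obtain
\[
[X^{n-1}](X^m) \>=\> \sum_{i=1}^n \frac{a_i^m}{f'(a_i)}\>.
\]
The left-hand side equals $0$ when $m\leq n-2$ and equals $1$ when $m=n-1$, which is exactly the assertion of the lemma.

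There is no genuine obstacle here: the separability assumption guarantees that the $a_i$ are distinct, which is all that Lagrange interpolation requires. The only thing to check carefully is the identification of the trace with the sum over embeddings, which is standard, and the leading-coefficient computation of $f(X)/(X-a_i)$, which is immediate from $f$ being monic. The result is a classical fact (sometimes called the Euler trace formula), and the above is the shortest self-contained route.
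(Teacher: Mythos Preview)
Your proof is correct: the Lagrange interpolation argument is the classical route to this identity (Euler's trace formula), and every step---the expression of the trace as a sum over embeddings, the leading-coefficient computation for $f(X)/(X-a_i)$, and the coefficient comparison---is valid under the separability hypothesis.

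The paper does not actually give its own proof of this lemma; it simply cites \cite[Section 6.3]{KKS}. So there is no in-paper argument to compare against. Your self-contained Lagrange interpolation proof is in fact the standard one and is almost certainly what the cited reference contains.
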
 \qed

\pars
For arbitrary $b,c\in K$, we note:
\begin{equation}                                \label{da-c}
b(a-c)^{p-1}\,\in\cM_{K(a)} \>\Longleftrightarrow\> vb\,>\,-(p-1)v(a-c)\>.
\end{equation}

\pars
First we consider Artin-Schreier extensions. By
Lemma~\ref{trace} and equation~(\ref{vf'AS}),
\begin{equation}                     \label{trtheta-c}
\tr_{K(\vartheta)|K}\left((\vartheta-c)^i\right)\>=\> \left\{
\begin{array}{ll}
0 & \mbox{if } 1\leq i\leq p-2\\
-1 & \mbox{if } i=p-1\>
\end{array}
\right.
\end{equation}
for arbitrary $c\in K$ since $\vartheta-c$ is also an Artin-Schreier generator. In
particular,
\[
\tr_{K(\vartheta)|K}  \left(b(\vartheta-c)^{p-1}\right) \>=\> -b
\]
for all $b\in K$. By (\ref{da-c}) it follows that
\begin{eqnarray}
\quad\tr_{K(\vartheta)|K} \left(\cM_{K(\vartheta)}\right) & \supseteq &
\{b\in K \mid vb>-(p-1)v(\vartheta-c)\mbox{ for some } c\in K\}\label{trMe}\\
&=& \{b\in K \mid vb\geq -(p-1)v(\vartheta-c)\mbox{ for some } c\in K\}\nonumber\\
&=& \{b\in K \mid vb\in -(p-1)v(\vartheta-K)\}\nonumber\\
&=& \{b\in K \mid vb\in (p-1)\Sigma_\cE\}\>, \nonumber
\end{eqnarray}
where the first equality follows from the fact that $-(p-1)v(\vartheta-c)$ has no 
smallest element, and the last equality follows from equation (\ref{SigmaAS}) of
Theorem~\ref{dist_galois_p}.

\parm
Now we consider Kummer extensions. Since $(\eta^i)^p\in K$, we have that
\[
\tr_{K(\eta)|K}(\eta^i)\>=\> 0
\]
for $1\leq i\leq p-1$. For $c\in K$ and $0\leq j\leq p-1$, we compute:
\[
(\eta-c)^j \>=\> \sum_{i=1}^j \binom{j}{i} \eta^i (-c)^{j-i}\>+\> (-c)^j\>.
\]
Thus for every $b\in K$,
\begin{equation}                      \label{treta-c}
\tr_{K(\eta)|K} (b(\eta-c)^j) \>=\> pb(-c)^j\>.
\end{equation}
By (\ref{da-c}), $b(\eta-c)^{p-1}\in\cM_{K(\eta)}$ holds if and only if 
$vb>-(p-1)v(\eta-c)$; 
the latter remains true if we make $v(\eta-c)$ even larger. Since $\eta$ is a $1$-unit, 
there is $c\in K$ such that $v(\eta-c)>0$, which implies that $vc=0$. Hence we may 
choose $c\in K$ with $vb>-(p-1)v(\eta-c)$ \ {\it and} \ $\>vc=0$. Applying
(\ref{treta-c}) with $j=p-1$, we find that $\tr_{K(\eta)|K} (b(-c)^{-(p-1)}
(\eta-c)^{p-1})=pb$. We obtain:
\begin{eqnarray}                                     \label{trMm}
\qquad\tr_{K(\eta)|K}  \left(\cM_{K(\eta)}\right) &\supseteq&
\{pb\mid b\in K,\, vb>-(p-1)v(\eta-c)\mbox{ for some } c\in K\}\\
&=& \{b\in K\mid vb\geq vp-(p-1)v(\eta-c)\mbox{ for some } c\in K\}\nonumber\\
&=& \left\{b\in K \mid vb\in (p-1)\left(\frac{1}{p-1}vp-v(\eta-K)\right)
\right\} \nonumber\\
&=& \{b\in K \mid vb\in (p-1)\Sigma_\cE\}\>, \nonumber
\end{eqnarray}
where the first equality follows from the fact that $-(p-1)v(\vartheta-c)$ has no 
smallest element, and the last equality follows from equation (\ref{SigmaKum}) of
Theorem~\ref{dist_galois_p}.

\parm
In order to prove the opposite inclusions in (\ref{trMe}) and (\ref{trMm}), we have 
to find out enough information about the elements $g(a)
\in K(a)$ that lie in
$\cM_{K(a)}$. Using the Taylor expansion, we write
\[
g(a)\>=\>\sum_{i=0}^{p-1} \partial_i g(c)(a-c)^i\>.
\]
By Lemma~\ref{Kap} there is $c\in K$ such that among the values $v\partial_i g(c)
(a-c)^i$, $0\leq i\leq p-1$, there is precisely one of minimal value, and the same 
holds for all $c'\in K$ with $v(a-c')\geq v(a-c)$. In particular, we may assume that
$v(a-c)>va$. For all such $c$, we have:
\[
vg(a)\>=\>\min_{0\leq i\leq p-1} v\partial_i g(c)(a-c)^i\>.
\]
Hence for $g(a)$ to lie in $\cM_{K(a)}$ it is necessary that $v\partial_i g(c)(a-c)^i>0$, 
or equivalently,
\begin{equation}                      \label{vgic}
v\partial_i g(c) \> >\> -iv(a-c)
\end{equation}
for $0\leq i\leq p-1$ and $c\in K$ with $v(a-c)>va$.

\parm
In the Artin-Schreier extension case, for $g(\vartheta)\in\cM_{K(\vartheta)}$ and 
$c\in K$ with $v(a-c)>va$, using (\ref{trtheta-c}) we find:
\[
\tr_{K(\vartheta)|K} (g(\vartheta))\>=\> \sum_{i=0}^{p-1} \tr_{K(\vartheta)|K}
(\partial_i g(c)(\vartheta-c)^i) \>=\> -\partial_{p-1}g(c)\>.
\]
Since $\partial_{p-1}g(c)>-(p-1)v(\vartheta-c)$ by (\ref{vgic}), this proves the 
desired equality in (\ref{trMe}).

\parm
In the Kummer extension case, for $g(\eta)\in\cM_{K(\eta)}$ and $c\in K$ with $v(a-c)>va$, 
using (\ref{treta-c}) we find:
\[
\tr_{K(\eta)|K} (g(\eta))\>=\> \sum_{j=0}^{p-1} \tr_{K(\eta)|K}(\partial_j 
g(c)(\eta-c)^j) \>=\> p\sum_{j=0}^{p-1} \partial_j g(c)(-c)^j\>.
\]
As we assume that $v(\eta-c)>0$, we have that $vc=0$ and 
\[
-iv(\eta-c)\geq -(p-1)v(\eta-c)\quad\mbox{ for }\> 0\leq i\leq p-1\>.
\]
Hence by (\ref{vgic}), $v \sum_{i=0}^{p-1} \partial_i g(c)(-c)^i\geq -(p-1)v(\eta-c)$.
This proves the desired equality in (\ref{trMm}). 

\mn
\begin{remark}                              \label{rem}
In the case of a Kummer extension, it follows from (\ref{trMm}) that $p\cO_K\subseteq
\tr_{K(\eta)|K} \left(\cM_{K(\eta)}\right)$ since $vp=vp-(p-1)v\eta$ as $\eta$ is a 
unit.
\end{remark}

\mn
%
%
\subsection{Proof of Theorem~\ref{defetrace}}                    \label{secttracepf}
\mbox{ }\sn
We have just proved the second equality of (\ref{defetraceeq}) in the previous section; 
the third equality follows from part 1) of Lemma~\ref{lemI^n=I}. The first equality is 
seen as follows. Take any $a\in \cO_L\,$. As the defect extension $(L|K,v)$ is immediate,
there is some $b\in\cO_K$ such that $a-b\in\cM_L$. Then $\tr_{L|K}(a)=\tr_{L|K}(b)+
\tr_{L|K}(a-b)$ with $\tr_{L|K}(a-b)\in\tr_{L|K}(\cM_L)$. If $\chara L=p$, then 
$\tr_{L|K}(b)=0$. If $\chara L=0$, then $\tr_{L|K}(b)=pb\in p\cO_K$, which is contained 
in $\tr_{L|K}(\cM_L)$ as stated in Remark~\ref{rem}.

\pars
Assume that the extension $\cE$ has independent defect. Then by part 1) of
Proposition~\ref{ind=I^p=I}, $I_\cE^p=I_\cE$ as well as $(I_\cE\cap K)^p=
I_\cE\cap K$. This implies that $I_\cE^{p-1}=I_\cE$ and $(I_\cE\cap K)^{p-1}=
I_\cE\cap K$. On the other hand, we know from the equivalence of statements a) and b) 
in Theorem~\ref{eqindep} that $I_\cE=\cM_{v_H}\,$, where $H$ is a strongly convex 
subgroup of $vL$, and that $H$ is equal to the convex subgroup $H_\cE$ appearing in the
definition of independent defect. Now the second equation of (\ref{trMM}) follows from
(\ref{defetraceeq}) as $(I_\cE\cap K)^{p-1}=I_\cE\cap K=\cM_{v_H}\cap K$. The third 
equation of (\ref{trMM}) holds since $\cM_{v_H}=(a\in L \mid va>H)$. 

Now assume that (\ref{trMM}) holds for some strongly convex subgroup $H$ of $vL=vK$. 
Set $\Sigma:=vL\setminus H=vK\setminus H$. From (\ref{defetraceeq}) we now obtain:
\[
(b\in K \mid vb\in (p-1)\Sigma_\cE)\>=\>(b\in K\mid vb\in \Sigma)\>.
\]
On the one hand, we have
\[
(b\in K \mid vb\in (p-1)\Sigma_\cE)\>=\>(b\in K \mid vb\in ((p-1)\Sigma_\cE)\fs)\>,
\]
and on the other, 
\[
(b\in K\mid vb\in \Sigma) \>=\> (b\in K\mid vb\in ((p-1)\Sigma)\fs)
\]
by Lemma~\ref{SD}. This implies that $((p-1)\Sigma_\cE)\fs=((p-1)\Sigma)\fs$.
Suppose that $\Sigma_\cE\ne\Sigma$. If there is $\alpha\in\Sigma_\cE\setminus\Sigma$,
then $\alpha<\Sigma$, whence $(p-1)\alpha<(p-1)\Sigma$ and therefore, 
$((p-1)\Sigma_\cE)\fs\supsetneq ((p-1)\Sigma)\fs$, contradiction. Symmetrically, we 
obtain a contradiction if there is $\alpha\in\Sigma\setminus\Sigma_\cE\,$. 
This shows that $\Sigma=\Sigma_\cE\,$, and we thus obtain that $\cE$ has independent 
defect and that $H=H_\cE\,$.

\pars
The last statement of the theorem holds since $vp\notin H_\cE=H$ by part 2) of
Proposition~\ref{ind=I^p=I}. If $(vK)_{vp}$ is archimedean, this forces $H=\{0\}$.
\qed

\mn
%
%
%
\subsection{Differents}          \label{sectDd}
\mbox{ }\sn
In this section, we compute the differents $\cD(\cO_L|\cO_K)$ in the case where
$vL=vK$. We will employ the following auxiliary results.
\begin{lemma}                              \label{O:I}
Take any $\cO_L$-fractional ideal $I$. 
\sn
1) If $vI$ has an infimum $\alpha$ in $vL$, then $\cO_L :_L I = (b\in L\mid vb\geq
-\alpha)$. If $vI$ has no infimum in $vL$, then $\cO_L :_L I = (b\in L\mid vb>-vI)$, 
which is not a principal ideal, and $v(\cO_L :_L I)$ has no infimum. 
\sn
2) If $vI$ has an infimum $\alpha$ in $vL$, then $\cO_L :_L (\cO_L :_L I) = (b\in L\mid
vb\geq\alpha)$. If $vI$ has no infimum in $vL$, then $\cO_L :_L (\cO_L :_L I) = I$. 
\end{lemma}
\begin{proof}
1): We compute:
\begin{eqnarray*}
\cO_L :_L I&=& (b\in L\mid bI\subseteq\cO_L)\>=\> (b\in L\mid vb+\beta\geq 0 \mbox{ for 
all } \beta\in vI)\\
&=& (b\in L\mid vb\geq -vI)\>.
\end{eqnarray*}
Since $\alpha$ is an infimum of $vI$ if and only if $-\alpha$ is a supremum of $-vI$, 
this yields our assertions.
\sn
2): This follows by applying part 1) twice.
\end{proof}

Denote by $\cT(\cO_L|\cO_K)$ the $\cO_L$-ideal generated 
by $\tr_{L|K}(\cO_L)$. We use the previous lemma to show:
\begin{theorem}                            \label{prop}
Assume that $(L|K,v)$ is a finite unibranched extension with $vL=vK$. Then we have:
\sn
1) $\cC(\cO_L|\cO_K)=\cO_L:_L \cT(\cO_L|\cO_K)$ and 
$\cD(\cO_L|\cO_K)=\cO_L:_L (\cO_L:_L \cT(\cO_L|\cO_K))$.
\sn
2) If $v\tr_{L|K}(\cO_L)$ has an infimum $\alpha$ in $vK$, then $\cD(\cO_L|\cO_K) = 
(b\in L\mid vb\geq\alpha)$ and if $\cD(\cO_L|\cO_K)\ne \cT(\cO_L|\cO_K)$, then
\begin{equation}                               \label{MDsubT}
\cM_L\, \cD(\cO_L|\cO_K)\>=\> \cT(\cO_L|\cO_K)\>.
\end{equation} 
If $v\tr_{L|K}(\cO_L)$ has no infimum 
in $vK$, then $\cD(\cO_L|\cO_K) = \cT(\cO_L|\cO_K)$, which is not principal.
\end{theorem}
\begin{proof} 
1): Take any $z\in L$. Since $vL=vK$, there is $z_1\in K$ such that $vz_1=vz$, so
that $z=z_1 z_2$ with $z_2$ a unit in $\cO_L\,$. Then 
\[
\tr_{L|K}(z\cO_L)\>=\>z_1\tr_{L|K}(z_2\cO_L)\>=\>z_1\tr_{L|K}(\cO_L) \>.
\]
Observe that $z_1 \tr_{L|K}(\cO_L)\subseteq \cO_K$ if and only if $z\cT(\cO_L|\cO_K)
\subseteq \cO_L\,$. This proves the assertions of part 1).
\sn
2): A part of the assertions follow from part 1) together with part 2) of 
Lemma~\ref{O:I}; it only remains to prove (\ref{MDsubT}) in case $\cD(\cO_L|\cO_K)$
properly contains $\cT(\cO_L|\cO_K)$. In this case, $\cD(\cO_L|\cO_K)=
(b\in L\mid vb\geq \alpha)$ and $\cT(\cO_L|\cO_K)=(b\in L\mid vb> \alpha)=v\cM_L+
(b\in L\mid vb\geq \alpha)$, which proves (\ref{MDsubT}).
\end{proof}

We will also need the following simple observation.
\begin{lemma}                      \label{(IcapK)}
Take an extension $(L|K,v)$ of valued fields with $vL=vK$, an $\cO_L$-ideal $I$, and
$n\in\N_{>0}\,$. Then the $\cO_L$-ideal $J$ generated by $(I\cap K)^n$ equals $I^n$.
\end{lemma}
\begin{proof}
The inclusion $J\subseteq I^n$ is obvious. Now take any $b\in I$. Since $vL=vK$, there is
$c\in K$ such that $vc=vb$, which implies that $c\in I\cap K$. Now $b^n=(b/c)^n c^n$ with
$(b/c)^n\in\cO_L$ and $c^n\in I\cap K$. Therefore, $b^n\in J$, which proves the reverse
inclusion.
\end{proof}

\sn
Proof of Theorem~\ref{Dd}: 
\sn
1): Lemmas~\ref{vderimmAS} and~\ref{vderimmK} together with Proposition~\ref{VdO} show 
that i) and ii) are equal. Proposition~\ref{VdO} shows that ii) and iii) are equal. 
From Theorem~\ref{defetrace} we know that $\tr_{L|K}\left(\cO_L\right) = (I_\cE\cap K)
^{p-1}$, whence $\cT(\cO_L|\cO_K)=I_\cE^{p-1}$ by Lemma~\ref{(IcapK)}. This shows that 
i) and iv) are equal.
\sn
2): If $v\cT(\cO_L|\cO_K)=vI_\cE^{p-1}$ has no infimum in $vK=vL$,
then from Theorem~\ref{prop} we obtain that $\cD(\cO_L|\cO_K) = \cT(\cO_L|\cO_K)
=I_\cE^{p-1}$. As the extension $(L|K,v)$ is immediate, $vI_\cE^{p-1}$ has no smallest 
element. Hence if $vI_\cE^{p-1}$ has an infimum $\alpha$, it does not contain it, and 
again by Theorem~\ref{prop}, $\cD(\cO_L|\cO_K) =(b\in L\mid vb\geq\alpha)\supsetneq
I_\cE^{p-1}$ and $\cM_L\, \cD(\cO_L|\cO_K)= \cT(\cO_L|\cO_K)=I_\cE^{p-1}$. Note that if
$\alpha=va$ for $a\in L$, then $(b\in L\mid vb\geq\alpha)=a\cO_L\,$.
\sn
3): Assume that $(K,v)$ has rank $1$. Then the only proper convex subgroup of $vL$ is 
$\{0\}$. Thus Proposition~\ref{propann} together with Theorems~\ref{OASdef} 
and~\ref{OKumdef} shows that $\ann\Omega_{\cO_L|\cO_K}=I_\cE^{p-1}$ if
$vI_\cE^{p-1}$ has no infimum in $vL$, and $\ann\Omega_{\cO_L|\cO_K}=(b\in L\mid
vb\geq\alpha)$ if $vI_\cE^{p-1}$ has infimum $\alpha$. From what we have already 
shown, it follows that $\cD(\cO_L|\cO_K) =\ann\Omega_{\cO_L|\cO_K}\,$.
\sn
4): Assume that the extension $\cE$ has independent defect; then $\Omega_{\cO_L|\cO_K}=0$ 
by Theorem~\ref{eqindep} and therefore, $\ann\Omega_{\cO_L|\cO_K}=\cO_L\,$. If $H_\cE=
\{0\}$, then $\tr_{L|K}\left(\cO_L\right) = \tr_{L|K} \left(\cM_L\right) = \cM_K$ by
Theorem~\ref{defetrace}.
Since $H_\cE=\{0\}$ is a strongly convex subgroup of $vL=vK$, we know that $vK$ has no 
minimal positive element. So $v\cM_K$ has infimum $0$ and it follows from part 2) of
Theorem~\ref{prop} that $\cD(\cO_L|\cO_K)=\cO_L\,$. 

Now assume that $H_\cE\ne\{0\}$. Since 
$\cM_{v_{H_\cE}}\cap K=\cM_{v_{H_\cE}|_K}\,$, Lemma~\ref{(IcapK)} shows that the 
$\cO_L$-ideal generated by $\cM_{v_{H_\cE}}\cap K$ is $\cM_{v_{H_\cE}}\,$.
By Theorem~\ref{defetrace}, $\tr_{L|K}\left(\cO_L\right) = \tr_{L|K} \left(\cM_L\right)
=\cM_{v_{H_\cE}}\cap K$, so $\cT(\cO_L|\cO_K)=\cM_{v_{H_\cE}}$. As
$v(\cM_{v_{H_\cE}}\cap K)=v\cM_{v_{H_\cE}}$ has no infimum in $vK$, part 2) of
Theorem~\ref{prop} shows that  
$\cD(\cO_L|\cO_K)=\cT(\cO_L|\cO_K)=\cM_{v_{H_\cE}}\subsetneq\cO_L=
\ann\Omega_{\cO_L|\cO_K}$ in this case.

Finally, assume that $\cD(\cO_L|\cO_K)$ is equal to $\cO_L$ or to $\cM_{v_H}$ for a 
strongly convex nontrivial subgroup $H$ of $vL$. If the former holds, then 
$\cD(\cO_L|\cO_K)$ is not equal to $I_\cE^{p-1}$ since $vI_\cE=\Sigma_\cE$ and hence also 
$I_\cE^{p-1}$ have no smallest element. Hence from the proof of part 2) 
it follows that $0$ is the infimum of $vI_\cE^{p-1}$. This implies that $I_\cE=\cM_L
=\cM_{v_H}$ where $H=\{0\}$. Since $v\cM_L=vI_\cE$ has no smallest element, it follows 
that $\{0\}$ is a strongly convex subgroup of $vL$.

If the latter holds, then $\cD(\cO_L|\cO_K)$ is not principal and therefore by part 2), 
$\cD(\cO_L|\cO_K)$ can only be equal to $I_\cE^{p-1}$. This implies that
$I_\cE=\cM_{v_H}$ for some strongly convex subgroup of $vL$. In both cases, it follows 
from the equivalence a)$\Leftrightarrow$b) of Theorem~\ref{eqindep} that $\cE$ has
independent defect.
\qed

\bn
%
%
\section{Proof of Theorems~\ref{eqindep2AS} and~\ref{eqindep2K}}  \label{sectpfthm1.4}
Take a defect extension $\cE=(L|K,v)$ of degree $p$. Then by equation (\ref{feuniq}),
the extension is unibranched and immediate, so in particular, $vL=vK$. 

We first prove Theorem~\ref{eqindep2AS}, so we assume that $\cE$ is an 
Artin-Schreier extension generated by $\vartheta\in L$ with $\vartheta^p-\vartheta=
a\in K$. By Corollary~\ref{SigmaE-}, $v(\vartheta-c)<0$ for all $c\in K$.
It follows that $v(\vartheta^p-c^p)=v(\vartheta-c)^p=pv(\vartheta-c)<v(\vartheta-c)$ 
and thus,
\begin{equation}                           \label{v(a-c^p+c)}
v(a-\wp(c))\>=\>v(\vartheta^p-\vartheta-c^p+c)\>=\>\min\{v(\vartheta^p-c^p),
v(\vartheta-c)\}\>=\>pv(\vartheta-c)\>.
\end{equation}
This proves equation~(\ref{1.4eq1}).

\pars
The equivalence a)$\Leftrightarrow$b) follows from the definition of independent defect
together with equation~(\ref{SigmaAS}) of Theorem~\ref{dist_galois_p} which says that 
$\Sigma_\cE=-v(\vartheta-K)$. The equivalences b)$\Leftrightarrow$c) and 
e)$\Leftrightarrow$f) follow from equation~(\ref{1.4eq1}). 
The equivalence c)$\Leftrightarrow$d) is trivial.

\pars
Now assume that $vK$ is $p$-divisible. Then the equivalence a)$\Leftrightarrow$e) 
follows from part 1) of Proposition~\ref{ind=I^p=I} together with the equation 
$\Sigma_\cE=-v(\vartheta-K)$.

\pars
Finally, assume that $(K,v)$ has rank $1$. Then the only proper convex subgroup of $vK$ 
is $H=\{0\}$, so that condition d) is equivalent to condition g).

\parb
Now we prove Theorem~\ref{eqindep2K}, so we assume that $\cE$ is a Kummer
extension. As explained at the beginning of Section~\ref{sectGaldefdegp}, it is 
generated by a 1-unit $\eta\in L$ with $\eta^p=a\in K$. By 
Corollary~\ref{SigmaE-}, $v(\eta-c)<\frac{1}{p-1}vp$ for all $c\in K$.

\pars
The following is part of Lemma~2.18 of \cite{KuRz}:
\begin{lemma}                                               \label{eta-c}
Take $\eta\in\tilde{K}$ such that $\eta^p\in K$ and $v\eta=0$. Then for $c\in K$ 
such that $v(\eta-c)>0$, $v(\eta-c)<\frac{1}{p-1}vp$ holds if and only if $v(\eta^p
-c^p)<\frac{p}{p-1}vp$, and if this is the case, then $v(\eta^p-c^p)=pv(\eta-c)$. 
\end{lemma}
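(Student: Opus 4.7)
The plan is to factor $\eta^p - c^p$ over $\tilde{K}$ as
\[
\eta^p - c^p \>=\> \prod_{i=0}^{p-1} (\eta - \zeta_p^i c)\>,
\]
where $\zeta_p \in \tilde{K}$ is a primitive $p$-th root of unity (which exists in $\tilde{K}$ since $\chara K = 0$), and then analyze each factor separately by means of the ultrametric inequality. The whole proof reduces to equation~(\ref{vz-1}) plus a short case analysis.

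First I would observe that $v(\eta - c) > 0$ together with $v\eta = 0$ forces $vc = 0$. By equation~(\ref{vz-1}), $v(1 - \zeta_p^i) = \frac{1}{p-1}vp$ for every $i$ with $1 \leq i \leq p-1$, hence $v((1-\zeta_p^i)c) = \frac{1}{p-1}vp$ as well. Writing $\eta - \zeta_p^i c = (\eta - c) + (1 - \zeta_p^i)c$ for $i \neq 0$ and applying the ultrametric inequality, three cases arise. If $v(\eta - c) < \frac{1}{p-1}vp$, then the two summands have distinct values and $v(\eta - \zeta_p^i c) = v(\eta - c)$. If $v(\eta - c) > \frac{1}{p-1}vp$, then $v(\eta - \zeta_p^i c) = \frac{1}{p-1}vp$. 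If $v(\eta - c) = \frac{1}{p-1}vp$, then $v(\eta - \zeta_p^i c) \geq \frac{1}{p-1}vp$.

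Summing over $i = 0, \ldots, p-1$ yields $v(\eta^p - c^p)$. In the first case, this gives $v(\eta^p - c^p) = p\cdot v(\eta - c) < \frac{p}{p-1}vp$, which simultaneously establishes the displayed formula $v(\eta^p - c^p) = pv(\eta - c)$ and the forward direction of the equivalence. In the other two cases, summing gives $v(\eta^p - c^p) \geq v(\eta - c) + (p-1)\cdot\frac{1}{p-1}vp \geq \frac{p}{p-1}vp$, supplying the contrapositive of the reverse direction.

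There is no real obstacle; the slightly delicate point is the equality case $v(\eta - c) = \frac{1}{p-1}vp$, where cancellation could alter $v(\eta - \zeta_p^i c)$, but since in that case one only needs the weak inequality $v(\eta^p - c^p) \geq \frac{p}{p-1}vp$ to handle the contrapositive, no exact computation is required there.
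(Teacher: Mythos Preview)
Your proof is correct. The paper itself does not give a proof of this lemma; it simply cites it as part of \cite[Lemma~2.18]{KuRz}. Your argument via the factorization $\eta^p-c^p=\prod_{i=0}^{p-1}(\eta-\zeta_p^i c)$ in $\tilde K$, combined with equation~(\ref{vz-1}) and the ultrametric case split on $v(\eta-c)$ versus $\frac{1}{p-1}vp$, is a clean self-contained argument. One minor remark: equation~(\ref{vz-1}) is stated in the paper under the hypothesis $\zeta\in K$, whereas you apply it with $\zeta_p\in\tilde K$; this is harmless, since the proof of~(\ref{vz-1}) works verbatim in any valued field containing a primitive $p$-th root of unity, in particular in $(\tilde K,v)$.
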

In order to prove equation~(\ref{1.4eq2}), we have to discuss the remaining case of
$v(\eta-c)\leq 0$. If $v(\eta-c)=0$, then $vc\geq 0$ and $c$ is not a 1-unit. Hence 
$c^p$ is not a 1-unit while $a=\eta^p$ is, whence $v(a-c^p)=0=pv(\eta-c)$. 
If $v(\eta-c)<0$, then $vc<0$ and $v(\eta-c)=vc$. It follows that $vc^p<0=v\eta^p$, so
that $v(\eta^p-c^p)=vc^p=pvc=pv(\eta-c)$. This completes the proof of 
equation~(\ref{1.4eq2}).

\pars
The equivalence a)$\Leftrightarrow$b) follows from the definition of independent defect
together with equation~(\ref{SigmaKum}) of Theorem~\ref{dist_galois_p}, which says that 
$\Sigma_\cE=\frac{1}{p-1}vp-v(\vartheta-K)$. The equivalences b)$\Leftrightarrow$c) and 
e)$\Leftrightarrow$f) follow from equation~(\ref{1.4eq2}) and the fact that $\alpha<H$ 
if and only if $p\alpha<H$ since $H$ is a convex subgroup of $vK$. 
The equivalence c)$\Leftrightarrow$d) is trivial.

\pars
Now assume that $vK$ is $p$-divisible. Then by part 1) of Proposition~\ref{ind=I^p=I}
together with the equation $\Sigma_\cE=\frac{1}{p-1}vp-v(\vartheta-K)$, condition a) is
equivalent to the equation
\[
\frac{p}{p-1}vp-pv(\eta-K) \>=\> \frac{1}{p-1} vp-v(\eta-K)\>.
\]
Multiplying both sides with $-1$ and then adding $\frac{p}{p-1}vp$ to both sides, we 
find that this equation is equivalent to condition e). 

\pars
Finally, if $(K,v)$ has rank $1$, then it follows as in the proof of
Theorem~\ref{eqindep2AS} that condition d) is equivalent to condition g).


\end{document}